\def\vbar{\mathchoice{\vrule height6.3ptdepth-.5ptwidth.8pt\kern- .8pt}
{\vrule height6.3ptdepth-.5ptwidth.8pt\kern-.8pt} {\vrule
height4.1ptdepth-.35ptwidth.6pt\kern-.6pt} {\vrule
height3.1ptdepth-.25ptwidth.5pt\kern-.5pt}}
\def\fudge{\mathchoice{}{}{\mkern.5mu}{\mkern.8mu}}
\def\bbc#1#2{{\rm \mkern#2mu\vbar\mkern-#2mu#1}}
\def\bbb#1{{\rm I\mkern-3.5mu #1}}
\def\bba#1#2{{\rm #1\mkern-#2mu\fudge #1}}
\def\bb#1{{\count4=`#1 \advance\count4by-64 \ifcase\count4\or\bba
A{11.5}\or \bbb B\or\bbc C{5}\or\bbb D\or\bbb E\or\bbb F \or\bbc
G{5}\or\bbb H\or \bbb I\or\bbc J{3}\or\bbb K\or\bbb L \or\bbb
M\or\bbb N\or\bbc O{5} \or \bbb P\or\bbc C{5}\or\bbb B\or\bbc
S{4.2}\or\bba T{10.5}\or\bbc U{5}\or \bba V{12}\or\bba
W{16.5}\or\bba X{11}\or\bba Y{11.7}\or\bba Z{7.5}\fi}}
\newcommand{\A}{{\mathcal{A}}}
\newtheorem{df}{Definition}[section]
\newtheorem{thm}{Theorem}[section]
\newtheorem{cor}{Corollary}[section]
\newtheorem{rem}{Remark}[section]
\newtheorem{prop}{Proposition}[section]
\newtheorem{exa}{Example}[section]
\newtheorem{lem}{Lemma}[section]
\begin{document}
\date{}
\title{Constructions and Cohomology of color Hom-Lie algebras}
\author{K. Abdaoui, F. Ammar and A. Makhlouf }
\address{K. Abdaoui and  F. Ammar, University of Sfax, Faculty of Sciences Sfax,  BP
1171, 3038 Sfax, Tunisia. }
\email{Abdaouielkadri@hotmail.com}
\email{Faouzi.Ammar@fss.rnu.tn}
\address{A. Makhlouf, University of Haute Alsace, 4 rue des fr\`eres Lumi\`ere, 68093 Mulhouse France. }\email{Abdenacer.Makhlouf@uha.fr}
 \maketitle{}
 \begin{abstract}The main purpose of this paper is to define   representations and a cohomology of color Hom-Lie algebras and to study some key constructions and properties. We describe   Hartwig-Larsson-Silvestrov Theorem in the case of $\Gamma$-graded algebras, study  one-parameter formal deformations, discuss  $\alpha^{k}$-generalized derivation and provide examples.
 \end{abstract}
 \begin{center}{\textbf{Introduction}}\end{center} Color Hom-Lie algebras are the natural generalizations of Hom-Lie algebras and Hom-Lie superalgebras. In recent years, they have become an interesting subject of mathematics and physics. A cohomology  of color Lie algebras were introduced and investigated, see \cite{scheunert1979theory,scheunert1979generalized},  and the representations of color Lie algebras were explicitly described in \cite{feldvoss2001represcolor}. As is  well known, derivations and extensions of Hom-Lie algebras, Hom-Lie superalgebras and color Hom-Lie algebras are very important subjects. Color Hom-Lie algebras were studied in \cite{yuan2010hom}. In the particular case of Hom-Lie superalgebras, cohomology theory of  was provided in \cite{sadaoui2011cohomology}.\\
This paper focusses on $\Gamma$-graded Hom-algebras with $\Gamma$ is an abelian group. Mainly, we prove a $\Gamma$-graded version of a Hartwig-Larsson-Silvestrov Theorem and we study representations and cohomology of color Hom-Lie algebras.\\
The paper is organized as follows. In section 1, we recall  definitions and some key constructions of color Hom-Lie algebras and we provide a list of twists of color Hom-Lie algebras. Section $2$ is dedicated to describe and prove the $\Gamma$-graded version Hartwig-Larsson-Silvestrov Theorem, which was proved for Hom-Lie algebras in  \cite[Theorem $5$]{hartwig2006deformations} and  for Hom-Lie superalgebras in \cite[Theorem $4.2$]{ammar2010hom}$)$. In section $3$, we construct a family of cohomologies  of color Hom-Lie algebras, discuss representation theory in connection with cohomology  and compute the second cohomology group of $(sl_{2}^{c},[.,.]_{\alpha},\varepsilon,\alpha)$. In section $4$, we study  formal deformations of color Hom-Lie algebras. In last section, we study the homogeneous $\alpha^{k}$-generalized derivations and the $\alpha^{k}$-centroid of color Hom-Lie algebras and we give some properties generalizing the homogeneous generalized derivations discussed in \cite{LIn ni}. Moreover in Proposition $\ref{DERhomJORDAN}$ we prove that the $\alpha$-derivation of color Hom-Lie algebras gives rise to a color  Hom-Jordan algebras.

\begin{center}\section{Definitions, Constructions and Examples}\end{center}
In the following we summarize definitions of color Hom-Lie and color Hom-associative algebraic
structures (see \cite{yuan2010hom}) generalizing the well known color Lie and color associative algebras.\\
Throughout the article we let $\mathbb{K}$ be an algebraically closed field of characteristic $0$ and
$\mathbb{K^{\ast}}=\mathbb{K}\backslash \{0\}$ be the group of units of $\mathbb{K}$.\\
$\bullet$~~Let $\Gamma$ be an abelian group. A vector space $V$ is said to be $\Gamma$-graded, if there is a family
$(V_{\gamma})_{\gamma\in \Gamma}$ of vector subspace of $V$ such that $$V=\bigoplus_{\gamma\in \Gamma}V_{\gamma}.$$
An element $x \in V$ is said to be homogeneous of the degree $\gamma \in \Gamma$ if $x \in V_{\gamma}, \gamma\in \Gamma$, and in this case, $\gamma$ is called the color of $x$. As usual, denote by $\overline{x}$ the color of an element $x \in V$. Thus each homogeneous
element $x \in V$ determines a unique group of element  $\overline{x} \in \Gamma$ by $x \in V_{\overline{x}}$. Fortunately, We can almost drop
the symbol $"-"$, since confusion rarely occurs.  In the sequel, we will denote by $\mathcal{H}(V)$ the set of all the homogeneous elements of $V$.\\
Let $V=\bigoplus_{\gamma\in \Gamma}V_{\gamma}$ and $V^{'}=\bigoplus_{\gamma\in \Gamma}V^{'}_{\gamma}$ be two $\Gamma$-graded vector spaces. A linear mapping $f: V \longrightarrow V^{'}$  is said to be homogeneous of the degree $\upsilon \in \Gamma$ if\\
  $$f(V_{\gamma})\subseteq V^{'}_{\gamma+\upsilon}, \forall~~ \gamma \in \Gamma.$$
if in addition, $f$ is  homogeneous of degree zero, i.e. $f(V_{\gamma})\subseteq V^{'}_{\gamma}$ holds for any $\gamma \in \Gamma$, then $f$ is said to be even.\\
$\bullet$~~An algebra $\mathcal{A}$ is said to be $\Gamma$-graded if its underlying vector space is $\Gamma$-graded, i.e. $\mathcal{A}=\bigoplus_{\gamma\in \Gamma}\mathcal{A}_{\gamma}$, and if, furthermore $\mathcal{A}_{\gamma}\mathcal{A}_{\gamma'}\subseteq \mathcal{A}_{\gamma+\gamma'}$, for all $\gamma, \gamma'\in \Gamma$. It is easy to see
that if $\mathcal{A}$ has a unit element $e$, it follows $e \in \mathcal{A}_{0}$. A subalgebra of $\mathcal{A}$ is said to be graded if its graded as a subspace of $\mathcal{A}$.\\
Let $\mathcal{A}^{'}$ be another $\Gamma$-graded algebra. A homomorphism $f:\mathcal{A} \longrightarrow \mathcal{A}^{'}$ of $\Gamma$-graded algebras is by definition a homomorphism  of the algebra $\mathcal{A}$ into the algebra  $\mathcal{A}^{'}$, which is, in addition an even mapping.\\
\begin{df} Let $\mathbb{K}$ be a field and  $\Gamma$ be an abelian group. A map $\varepsilon:\Gamma\times\Gamma\rightarrow \mathbb{K^{\ast}}$ is called a skew-symmetric \textit{bi-character} on ${\Gamma}$ if the following identities hold , for all $a,b,c$ in $\Gamma$
\begin{enumerate}
\item $\varepsilon(a,b)~\varepsilon(b,a)=1,$
\item $\varepsilon(a,b+c)=\varepsilon(a,b)~\varepsilon(a,c),$
\item $\varepsilon(a+b,c)=\varepsilon(a,c)~\varepsilon(b,c).$
\end{enumerate}
\end{df}
 The definition above implies, in particular, the following relations
$$\varepsilon(a,0)=\varepsilon(0,a)=1,\ \varepsilon(a,a)=\pm1, \  \textrm{for\ all}\  a \in \Gamma.$$
If $x$ and $x'$ are two homogeneous elements of degree $\gamma$ and $\gamma'$ respectively and $\varepsilon$ is a skew-symmetric bi-character, then we shorten the notation by writing $\varepsilon(x,x')$ instead of $\varepsilon(\gamma,\gamma')$.
\begin{df}\cite{yuan2010hom} A \textit{color Hom-Lie} algebra is a quadruple $(\mathcal{A},[.,.],\varepsilon,\alpha)$ consisting of a $\Gamma$-graded vector space $\mathcal{A}$, a bi-character $\varepsilon$, an even bilinear mapping
$[.,.]:\mathcal{A}\times\mathcal{A}\rightarrow\mathcal{A}$ $($i.e. $[\mathcal{A}_{a},\mathcal{A}_{b}]\subseteq \mathcal{A}_{a+b}$ for all $a,b \in \Gamma)$ and an even homomorphism $\alpha:\mathcal{A}\rightarrow\mathcal{A}$ such that for homogeneous elements we have
$$\label{sksymmetric}[x,y]=-\varepsilon(x,y)[y,x] ~~(\varepsilon\textrm{-skew\ symmetric}).$$
$$\label{HJCI}\circlearrowleft_{x,y,z}\varepsilon(z,x)[\alpha(x),[y,z]]=0~~(\varepsilon\textrm{-Hom-Jacobi\ condition}).$$
where $\circlearrowleft_{x,y,z}$ denotes summation over the cyclic permutation on $x,y,z$.
\end{df}
In particular, if $\alpha$ is a morphism of color Lie algebras $($i.e. $\alpha\circ[.,.]=[.,.]\circ\alpha^{\otimes2})$, then we call $(\mathcal{A},[.,.],\varepsilon,\alpha)$ a multiplicative color Hom-Lie algebra.\\
Observe that when $\alpha=Id$, the $\varepsilon$-Hom-Jacobi condition $(\ref{HJCI})$ reduces to the usual $\varepsilon$-Jacobi condition\\
$$\circlearrowleft_{x,y,z}\varepsilon(z,x)[x,[y,z]]=0$$
for all $x,y,z \in \mathcal{H}(\mathcal{A}).$
\begin{exa} A color Lie algebra $(\mathcal{A},[.,.],\varepsilon)$ is a color Hom-Lie algebra with $\alpha=Id$, since the $\varepsilon$-Hom-Jacobi condition
reduces to the $\varepsilon$-Jacobi condition when $\alpha=Id$.
\end{exa}
\begin{df} Let $(\mathcal{A},[.,.],\varepsilon,\alpha)$ be a color Hom-Lie algebra. It is called
\begin{enumerate}
\item \textsf{multiplicative} color Hom-Lie algebra if $\alpha[x,y]=[\alpha(x),\alpha(y)]$.
\item \textsf{regular} color Hom-Lie algebra if $\alpha$ is an automorphism.
\item \textsf{involutive} color Hom-Lie algebra if $\alpha$ is an involution, that is $\alpha^{2}=Id$.
\end{enumerate}
\end{df}
Let $(\mathcal{A},[.,.],\varepsilon,\alpha)$ be a regular color Hom-Lie algebra. It was observed in \cite{Gohr2009hom} that the composition method using $\alpha^{-1}$ leads to a color Lie algebra.
\begin{prop} Let $(\mathcal{A},[.,.],\varepsilon,\alpha)$ be a regular color Hom-Lie algebra. Then $(\mathcal{A},[.,.]_{\alpha^{-1}}=\alpha^{-1}\circ [.,.],\varepsilon)$ is a color Lie algebra.
\end{prop}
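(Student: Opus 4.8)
The plan is to verify directly that the twisted bracket $[.,.]_{\alpha^{-1}} = \alpha^{-1}\circ[.,.]$ satisfies the two defining axioms of a color Lie algebra, namely $\varepsilon$-skew-symmetry and the un-twisted $\varepsilon$-Jacobi identity. Throughout I would abbreviate $\beta = \alpha^{-1}$ and first record the properties of $\beta$ that drive the argument. Since $\alpha$ is an automorphism of the color Hom-Lie algebra, it is an even bijective linear map satisfying $\alpha[x,y]=[\alpha(x),\alpha(y)]$. Consequently $\beta$ is even as well, so it preserves degrees and $\varepsilon(\beta(x),\beta(y))=\varepsilon(x,y)$; moreover $\beta$ is itself multiplicative, that is $\beta[x,y]=[\beta(x),\beta(y)]$. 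This last identity is obtained by applying $\beta$ to the equality $\alpha[\beta(x),\beta(y)]=[\alpha\beta(x),\alpha\beta(y)]=[x,y]$.

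Skew-symmetry is then immediate from linearity of $\beta$:
$$[x,y]_{\alpha^{-1}} = \beta([x,y]) = \beta\bigl(-\varepsilon(x,y)[y,x]\bigr) = -\varepsilon(x,y)\,\beta([y,x]) = -\varepsilon(x,y)[y,x]_{\alpha^{-1}},$$
while bilinearity and evenness of the new bracket follow at once from the corresponding properties of $\beta$ and of $[.,.]$.

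For the Jacobi identity the key manoeuvre is a change of variables in the $\varepsilon$-Hom-Jacobi condition. Because $\beta$ is bijective and even, I may substitute $\beta(x),\beta(y),\beta(z)$ for $x,y,z$ in
$$\circlearrowleft_{x,y,z}\varepsilon(z,x)[\alpha(x),[y,z]]=0.$$
Using $\varepsilon(\beta(z),\beta(x))=\varepsilon(z,x)$, the cancellation $\alpha(\beta(x))=x$, and the multiplicativity $[\beta(y),\beta(z)]=\beta([y,z])=[y,z]_{\alpha^{-1}}$, the identity collapses to
$$\circlearrowleft_{x,y,z}\varepsilon(z,x)\,[x,[y,z]_{\alpha^{-1}}]=0.$$
Applying the linear map $\beta$ to this cyclic sum and rewriting $\beta\bigl([x,[y,z]_{\alpha^{-1}}]\bigr) = [x,[y,z]_{\alpha^{-1}}]_{\alpha^{-1}}$ yields precisely the $\varepsilon$-Jacobi identity for the bracket $[.,.]_{\alpha^{-1}}$, which completes the verification.

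I expect the only genuine subtlety to be the bookkeeping around multiplicativity: the reduction above really does use $\alpha[x,y]=[\alpha(x),\alpha(y)]$, which is exactly what the hypothesis ``$\alpha$ is an automorphism'' provides. Everything else is routine once the substitution $x\mapsto\beta(x)$ is installed, and no grading or sign difficulties arise, since $\beta$ is even and hence carries every bi-character factor through unchanged.
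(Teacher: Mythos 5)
Your proof is correct and follows essentially the same route as the paper's: both reduce the $\varepsilon$-Jacobi identity for $[.,.]_{\alpha^{-1}}$ to the $\varepsilon$-Hom-Jacobi identity by exploiting the multiplicativity and bijectivity of $\alpha^{-1}$ (which you rightly extract from the hypothesis that $\alpha$ is an automorphism). The only cosmetic difference is that you substitute $\alpha^{-1}(x),\alpha^{-1}(y),\alpha^{-1}(z)$ into the Hom-Jacobi identity and then apply $\alpha^{-1}$, whereas the paper rewrites the twisted cyclic sum directly as $\alpha^{-2}$ applied to the Hom-Jacobi expression; these are the same computation read in opposite directions.
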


\begin{proof} Note that $[.,.]_{\alpha^{-1}}$ is $\varepsilon$-skew-symmetric because $[.,.]$  is $\varepsilon$-skew-symmetric and $\alpha^{-1}$ is linear.\\
For  $x,y,z \in \mathcal{H}(\mathcal{A})$, we have$:$
\begin{eqnarray*}
  \circlearrowleft_{x,y,z}\varepsilon(z,x)[x,[y,z]_{\alpha^{-1}}]_{\alpha^{-1}}
  &=&\circlearrowleft_{x,y,z}\varepsilon(z,x)\alpha^{-1}[x,[y,z]_{\alpha^{-1}}]  \\
   &=& \alpha^{-2}(\circlearrowleft_{x,y,z}\varepsilon(z,x)[\alpha(x),[y,z]]) \\
   &=&0.
\end{eqnarray*}
\end{proof}
\begin{rem} In particular the proposition is valid when $\alpha$ is an involution.
The following theorem generalize the result of \cite{yuan2010hom}. In the following  starting from a  color Hom-Lie algebra and an even color Lie algebra endomorphism, we construct a new color Hom-Lie algebra.
\end{rem}
We recall in the following the definition of Hom-associative color algebra which provide a different way for constructing color Hom-Lie algebra by extending the fundamental construction of color Lie algebras from associative color algebra via commutator bracket multiplication. This structure was introduced by \cite{yuan2010hom} .
\begin{df}\cite{yuan2010hom} A \textit{color Hom-associative} algebra is a triple $(\mathcal{A},\mu,\alpha)$ consisting of a $\Gamma$-graded linear space $\mathcal{A}$, an even bilinear map $\mu:\mathcal{A}\times \mathcal{A}\rightarrow \mathcal{A}$ $($i.e $\mu(\mathcal{A}_a,\mathcal{A}_b)\subset \mathcal{A}_{a+b})$ and an even homomorphism $\alpha:\mathcal{A}\rightarrow \mathcal{A}$ such that
\begin{equation}\label{HACA}
    \mu(\alpha(x),\mu(y,z))=\mu(\mu(x,y),\alpha(z)).
\end{equation}
In the case where $\mu(x,y)=\varepsilon(x,y)\mu(y,x)$, we call the Hom-associative color algebra $(\mathcal{A},\mu,\alpha)$ a commutative Hom-associative color algebra.
\end{df}
\begin{rem} We recover classical associative color algebra when $\alpha=Id_{\mathcal{A}}$ and the condition $(\ref{HACA})$ is the associative condition in this case.
\end{rem}
\begin{prop}\cite{yuan2010hom} Let $(\mathcal{A},\mu,\alpha)$ be a Hom-associative color algebra defined on the linear space $\mathcal{A}$ by the multiplication $\mu$ and an even homomorphism $\alpha$. Then the quadruple $(\mathcal{A},[.,.],\varepsilon,\alpha)$, where the bracket is defined for $x, y \in \mathcal{H}(\mathcal{A})$ by
$$[x,y]=\mu(x,y)-\varepsilon(x,y)\mu(y,x)$$
is a color Hom-Lie algebra.
\end{prop}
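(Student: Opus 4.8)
The plan is to verify the three defining properties of a color Hom-Lie algebra for the quadruple $(\mathcal{A},[.,.],\varepsilon,\alpha)$: that the bracket is even, that it is $\varepsilon$-skew-symmetric, and that it satisfies the $\varepsilon$-Hom-Jacobi condition; the map $\alpha$ is already an even homomorphism by hypothesis. Evenness is immediate: for homogeneous $x\in\mathcal{A}_{a}$ and $y\in\mathcal{A}_{b}$ both $\mu(x,y)$ and $\mu(y,x)$ lie in $\mathcal{A}_{a+b}$ because $\mu$ is even, while $\varepsilon(x,y)\in\mathbb{K}^{\ast}$ is merely a scalar, so $[x,y]=\mu(x,y)-\varepsilon(x,y)\mu(y,x)\in\mathcal{A}_{a+b}$. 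For the $\varepsilon$-skew-symmetry I would compute $-\varepsilon(x,y)[y,x]$ directly: expanding $[y,x]=\mu(y,x)-\varepsilon(y,x)\mu(x,y)$ and multiplying by $-\varepsilon(x,y)$ gives $-\varepsilon(x,y)\mu(y,x)+\varepsilon(x,y)\varepsilon(y,x)\mu(x,y)$, and the single bi-character identity $\varepsilon(x,y)\varepsilon(y,x)=1$ collapses the second term to $\mu(x,y)$, recovering $[x,y]$.

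The substantial part is the $\varepsilon$-Hom-Jacobi condition $\circlearrowleft_{x,y,z}\varepsilon(z,x)[\alpha(x),[y,z]]=0$. First I would fully expand one cyclic summand by using the definition of the bracket twice, producing four terms of the shape $\mu(\alpha(x),\mu(\cdot,\cdot))$ and $\mu(\mu(\cdot,\cdot),\alpha(x))$. Here I would use that $\alpha$ is even, so that $\overline{\alpha(x)}=\overline{x}$ and $\varepsilon(\alpha(x),[y,z])=\varepsilon(x,y)\varepsilon(x,z)$ by the multiplicativity of the bi-character. Then I would invoke Hom-associativity $(\ref{HACA})$ to rewrite every term as one of the six associated triple products $\mu(\mu(u,v),\alpha(w))=\mu(\alpha(u),\mu(v,w))$ indexed by the orderings of $x,y,z$. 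After simplifying the scalar prefactors using $\varepsilon(z,x)\varepsilon(x,z)=1$, each cyclic summand becomes a $\mathbb{K}$-linear combination of these six triple products.

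Finally I would sum over the three cyclic permutations and collect the coefficients term by term. I expect each of the six triple products to occur exactly twice, arising from two different cyclic summands, with coefficients that are negatives of one another once the bi-character relations are applied, so that the entire sum cancels in pairs and vanishes. The main obstacle is purely the bookkeeping of the scalar factors $\varepsilon(\cdot,\cdot)$: one must track how the prefactor $\varepsilon(z,x)$ interacts with the factors $\varepsilon(x,y)$ and $\varepsilon(y,z)$ generated by expanding the inner bracket, and confirm that after cyclic permutation the matching terms genuinely carry opposite signs. Organizing the twelve resulting terms by their underlying triple product, rather than by cyclic summand, is the device I would use to keep this verification manageable.
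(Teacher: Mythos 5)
Your proof is correct. The paper itself gives no argument for this proposition --- it is quoted from \cite{yuan2010hom} without proof --- but your computation is the standard direct verification and it goes through: the $\varepsilon$-skew-symmetry follows from $\varepsilon(x,y)\varepsilon(y,x)=1$ exactly as you say, and for the $\varepsilon$-Hom-Jacobi condition, after expanding each cyclic summand, cancelling $\varepsilon(z,x)\varepsilon(x,z)=1$, and using Hom-associativity to rewrite $\mu(\mu(y,z),\alpha(x))$ as $\mu(\alpha(y),\mu(z,x))$ (and similarly for the other right-bracketed terms), the twelve resulting terms group into the six triple products $\mu(\alpha(u),\mu(v,w))$, each occurring twice with opposite coefficients, so the sum vanishes. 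The one point worth making explicit in a write-up is the step $\varepsilon(\alpha(x),[y,z])=\varepsilon(x,y+z)=\varepsilon(x,y)\varepsilon(x,z)$, which uses both that $\alpha$ and the bracket are even and the multiplicativity of the bi-character; you have correctly identified this as the place where the grading hypotheses enter.
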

\begin{thm}\label{induced-color} Let $(\mathcal{A},[.,.],\varepsilon,\alpha)$ be a color Hom-Lie algebra and $\beta:\mathcal{A}\longrightarrow\mathcal{A}$ be an even color Lie algebra endomorphism. Then $(\mathcal{A},[.,.]_{\beta},\varepsilon,\beta\circ\alpha)$, where $[x,y]_{\beta}=\beta\circ [x,y]$, is a color Hom-Lie algebra.\\
Moreover, suppose that $(\mathcal{A}^{'},[.,.]^{'},\varepsilon)$ is a color Lie algebra and $\alpha^{'}:\mathcal{A}^{'}\longrightarrow\mathcal{A}^{'}$ is color Lie algebra endomorphism. If $f:\mathcal{A}\longrightarrow\mathcal{A}^{'} $ is a Li color algebra morphism that satisfies $f \circ \beta= \alpha^{'}\circ f$ then
$$f:(\mathcal{A},[.,.]_{\beta},\varepsilon,\beta\circ\alpha)\longrightarrow (\mathcal{A}^{'},[.,.]^{'},\varepsilon,\alpha^{'}) $$
is a morphism of color Hom-Lie algebras.
\end{thm}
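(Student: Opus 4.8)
The plan is to verify directly the two defining axioms of a color Hom-Lie algebra for the quadruple $(\mathcal{A},[.,.]_{\beta},\varepsilon,\beta\circ\alpha)$, and then to check the conditions characterizing a morphism. First I would record the routine facts: since $\beta$ and $\alpha$ are even, $\beta\circ\alpha$ is an even endomorphism and $[.,.]_{\beta}=\beta\circ[.,.]$ is an even bilinear map (it lands in the correct homogeneous component because $\beta$ preserves colors). The $\varepsilon$-skew-symmetry of $[.,.]_{\beta}$ is then immediate from linearity of $\beta$: applying $\beta$ to $[x,y]=-\varepsilon(x,y)[y,x]$ gives $[x,y]_{\beta}=-\varepsilon(x,y)[y,x]_{\beta}$.

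The heart of the first part is the $\varepsilon$-Hom-Jacobi identity for the twisted structure, namely
$$\circlearrowleft_{x,y,z}\varepsilon(z,x)\,[(\beta\circ\alpha)(x),[y,z]_{\beta}]_{\beta}=0.$$
I would expand a single cyclic term using the definition of $[.,.]_{\beta}$ together with the crucial hypothesis that $\beta$ is a color Lie algebra endomorphism, i.e. $\beta[u,v]=[\beta(u),\beta(v)]$. This gives
$$[(\beta\circ\alpha)(x),[y,z]_{\beta}]_{\beta}=\beta\big[\beta\alpha(x),\beta[y,z]\big]=\beta^{2}\big[\alpha(x),[y,z]\big],$$
where the outer $\beta$ comes from the definition of $[.,.]_{\beta}$ and bracket-preservation of $\beta$ collapses the inner bracket. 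Since $\beta$ is even it commutes with multiplication by the scalars $\varepsilon(z,x)$ and leaves colors unchanged, so the whole cyclic sum factors as $\beta^{2}\big(\circlearrowleft_{x,y,z}\varepsilon(z,x)[\alpha(x),[y,z]]\big)$, which vanishes because the bracketed expression is exactly the $\varepsilon$-Hom-Jacobi identity of the original algebra $(\mathcal{A},[.,.],\varepsilon,\alpha)$. This settles the first assertion.

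For the \emph{moreover} part I would check that $f$ respects brackets and intertwines the twisting maps, the target being understood to carry the induced structure $[.,.]'_{\alpha'}=\alpha'\circ[.,.]'$ (which is a color Hom-Lie algebra by the first part applied to the color Lie algebra $\mathcal{A}'$ with $\alpha=\mathrm{Id}$). For the bracket I would compute
$$f([x,y]_{\beta})=f(\beta[x,y])=(f\circ\beta)[x,y]=\alpha'\big(f[x,y]\big)=\alpha'\big([f(x),f(y)]'\big)=[f(x),f(y)]'_{\alpha'},$$
using $f\circ\beta=\alpha'\circ f$ and that $f$ is a color Lie algebra morphism. I expect the compatibility with the structure maps, rather than this bracket computation, to be the only delicate point: one must verify $f\circ(\beta\circ\alpha)=\alpha'\circ f$, and expanding $f\circ\beta\circ\alpha=(f\circ\beta)\circ\alpha=\alpha'\circ(f\circ\alpha)$ shows this reduces to $f\circ\alpha=f$. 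The statement is therefore clean precisely when the source structure map acts trivially on the relevant data (for instance when $\mathcal{A}$ is an honest color Lie algebra with $\alpha=\mathrm{Id}$, so the twist yields $(\mathcal{A},[.,.]_{\beta},\varepsilon,\beta)$); in the general case one should either read \textit{morphism of color Hom-Lie algebras} in the weak sense of bracket-preservation only, or supply the additional intertwining hypothesis $f\circ\alpha=f$. Everything else is formal manipulation using linearity and the endomorphism property of $\beta$, $\alpha'$ and $f$.
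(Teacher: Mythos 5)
Your proof is correct and follows essentially the same route as the paper: $\varepsilon$-skew-symmetry by linearity of $\beta$, the Hom-Jacobi identity by collapsing $[(\beta\circ\alpha)(x),[y,z]_{\beta}]_{\beta}$ to $\beta^{2}[\alpha(x),[y,z]]$ and factoring $\beta^{2}$ out of the cyclic sum, and the same bracket computation for the morphism claim (the paper applies $\beta[u,v]=[\beta(u),\beta(v)]$ before pushing $f$ through, you apply $f\circ\beta=\alpha'\circ f$ to $[x,y]$ directly; these are interchangeable). Your closing caveat is a sharper reading than the paper itself: the paper's proof of the ``moreover'' part only verifies bracket preservation into the twisted target $[.,.]'_{\alpha'}$ and never checks the intertwining condition $f\circ(\beta\circ\alpha)=\alpha'\circ f$, which, as you note, reduces to $f\circ\alpha=f$ and so holds in general only when the source is an honest color Lie algebra with $\alpha=\mathrm{Id}$ (the setting in which the theorem is actually used later, e.g.\ in the $sl_{2}^{c}$ example).
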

\begin{proof} Obviously $[.,.]_{\beta}$ is $\varepsilon$-skew-symmetric and we show that $(\mathcal{A},[.,.]_{\beta},\varepsilon,\beta\circ\alpha)$ satisfies the $\varepsilon$-Hom-Jacobi condition $\ref{HJCI}$. Indeed
\begin{eqnarray*}
   \circlearrowleft_{x,y,z}\varepsilon(z,x)[\beta\circ\alpha(x),[y,z]_{\beta}]_{\beta} &=& \circlearrowleft_{x,y,z}\varepsilon(z,x)[\beta\circ\alpha(x),\beta([y,z])]_{\beta} \\
   &=& \beta^{2}(\circlearrowleft_{x,y,z}\varepsilon(z,x)[\alpha(x),[y,z]]) \\
   &=&  0.
\end{eqnarray*}
The second assertion follows from
$$f([x,y]_{\beta})=f([\beta(x),\beta(y)])=[f\circ \beta(x),f\circ \beta(y)]^{'}=[\alpha^{'}\circ f(x),\alpha^{'}\circ f(y)]^{'}=[f(x),f(y)]^{'}_{\alpha^{'}}.$$
\end{proof}
\begin{exa}Let $(\mathcal{A},[.,.],\varepsilon)$ be a color Lie algebra and $\alpha$ be a color Lie algebra morphism, then $(\mathcal{A},[.,.]_{\alpha}=\alpha \circ [.,.],\varepsilon,\alpha)$ is a multiplicative color Hom-Lie algebra.
\end{exa}
\begin{df} Let $(\mathcal{A},[.,.],\varepsilon,\alpha)$ be a multiplicative color Hom-Lie algebra and $n\geq 0$. Define the nth derived Hom-algebra of $\mathcal{A}$ by
$$\mathcal{A}^{(n)}=(\mathcal{A},[.,.]^{(n)}=\alpha^{2^{n}-1}\circ[.,.],\varepsilon,\alpha^{2^{n}}).$$
Note that $\mathcal{A}^{(0)}=\mathcal{A},~~\mathcal{A}^{(1)}=(\mathcal{A},[.,.]^{(1)}=\alpha\circ[.,.],\varepsilon,\alpha^{2})$, and $\mathcal{A}^{(n+1)}=(\mathcal{A}^{n})^{1}$.
\end{df}
\begin{cor}\label{abdaw} Let $(\mathcal{A},[.,.],\varepsilon,\alpha)$ be a color Hom-Lie algebra. Then the nth derived Hom-algebra of $\mathcal{A}$
$$\mathcal{A}^{(n)}=(\mathcal{A},[.,.]^{(n)}=\alpha^{2^{n}-1}\circ[.,.],\varepsilon,\alpha^{2^{n}})$$
is also a color Hom-Lie algebra for each $n\geq 0$.
\end{cor}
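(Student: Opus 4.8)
The plan is to argue by induction on $n$, leaning on Theorem \ref{induced-color} together with the relation $\mathcal{A}^{(n+1)}=(\mathcal{A}^{(n)})^{(1)}$ recorded in the preceding definition. Since the derived Hom-algebra is only defined in the multiplicative setting, I take $\mathcal{A}$ to be multiplicative throughout; iterating $\alpha[x,y]=[\alpha(x),\alpha(y)]$ then gives $\alpha^{k}[x,y]=[\alpha^{k}(x),\alpha^{k}(y)]$ for every $k\geq 0$ and all $x,y\in\mathcal{H}(\mathcal{A})$, a fact I will use repeatedly.

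For the base case $n=0$ one only checks exponents: $2^{0}-1=0$ and $2^{0}=1$, so $[.,.]^{(0)}=\alpha^{0}\circ[.,.]=[.,.]$ and the twist is $\alpha^{2^{0}}=\alpha$. Hence $\mathcal{A}^{(0)}=(\mathcal{A},[.,.],\varepsilon,\alpha)=\mathcal{A}$, which is a color Hom-Lie algebra by hypothesis.

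For the inductive step, suppose $\mathcal{A}^{(n)}=(\mathcal{A},[.,.]^{(n)},\varepsilon,\alpha^{2^{n}})$ is a color Hom-Lie algebra, where $[.,.]^{(n)}=\alpha^{2^{n}-1}\circ[.,.]$. I would apply Theorem \ref{induced-color} to $\mathcal{A}^{(n)}$ with the endomorphism $\beta=\alpha^{2^{n}}$, i.e.\ the very twist of $\mathcal{A}^{(n)}$. The theorem then yields that $(\mathcal{A},\beta\circ[.,.]^{(n)},\varepsilon,\beta\circ\alpha^{2^{n}})$ is a color Hom-Lie algebra, and it remains to identify this quadruple with $\mathcal{A}^{(n+1)}$, which is pure bookkeeping with exponents: the new bracket is
$$\beta\circ[.,.]^{(n)}=\alpha^{2^{n}}\circ\alpha^{2^{n}-1}\circ[.,.]=\alpha^{2^{n+1}-1}\circ[.,.]=[.,.]^{(n+1)},$$
while the new twist is $\beta\circ\alpha^{2^{n}}=\alpha^{2^{n}}\circ\alpha^{2^{n}}=\alpha^{2^{n+1}}$. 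Thus the resulting quadruple is exactly $\mathcal{A}^{(n+1)}$, closing the induction.

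The single substantive hypothesis of Theorem \ref{induced-color} that must be verified is that $\beta=\alpha^{2^{n}}$ is an even endomorphism of the bracket $[.,.]^{(n)}$, i.e.\ that $\alpha^{2^{n}}\big([x,y]^{(n)}\big)=\big[\alpha^{2^{n}}(x),\alpha^{2^{n}}(y)\big]^{(n)}$; this is exactly where multiplicativity does the work. Unravelling $[.,.]^{(n)}=\alpha^{2^{n}-1}\circ[.,.]$, the left-hand side equals $\alpha^{2^{n+1}-1}([x,y])$, and using $\alpha^{2^{n}}[x,y]=[\alpha^{2^{n}}(x),\alpha^{2^{n}}(y)]$ the right-hand side collapses to the same expression; evenness of $\beta$ is immediate as $\alpha$ is even. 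I expect this endomorphism check to be the only real obstacle — everything else is the exponent arithmetic above — and once it is in place Theorem \ref{induced-color} delivers the claim for every $n\geq 0$.
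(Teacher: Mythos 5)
Your proof is correct and matches the paper's intent: the corollary carries no proof in the text precisely because it is meant to follow from Theorem \ref{induced-color} together with multiplicativity of $\alpha$ (which, as you rightly note, must be assumed since the derived Hom-algebra is only defined in the multiplicative setting, even though the corollary's statement omits the word). Your induction via $\mathcal{A}^{(n+1)}=(\mathcal{A}^{(n)})^{(1)}$ is sound, the endomorphism check being exactly the point where multiplicativity enters; the only remark is that one can also obtain the result in a single step by applying Theorem \ref{induced-color} directly to $(\mathcal{A},[.,.],\varepsilon,\alpha)$ with $\beta=\alpha^{2^{n}-1}$, since then $\beta\circ[.,.]=[.,.]^{(n)}$ and $\beta\circ\alpha=\alpha^{2^{n}}$.
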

\subsection{Examples of twists of color Hom-Lie algebras}
In this section we provide examples of color Hom-Lie algebras. We use  the classification of color Lie algebra provided  in \cite{silvestrov} and the twisting principle.\\
In Examples $\ref{class1}$ and $\ref{class2}$, $\Gamma$ is the group $\mathbb{Z}_{2}^{3}$, and $\varepsilon: \Gamma \times \Gamma \longrightarrow \mathbb{C}$ is defined by the matrix
$$[\varepsilon(i,j)]=\left(
    \begin{array}{ccc}
       1 & -1 & -1 \\
      -1 &  1 & -1 \\
      -1 & -1 &  1 \\
    \end{array}
  \right).$$
Elements of this matrix specify in the natural way values of $\varepsilon$ on the set $\{(1,1,0),(1,0,1),(0,1,1)\}\times \{(1,1,0),(1,0,1),(0,1,1)\}\subset \mathbb{Z}_{2}^{3}\times \mathbb{Z}_{2}^{3}$ $($The elements $(1,1,0),(1,0,1),(0,1,1)$ are ordered and numbered by the numbers $1,2,3$ respectively$)$. The values of $\varepsilon$ on other elements from $\mathbb{Z}_{2}^{3}\times \mathbb{Z}_{2}^{3}$ do not affect the multiplication $[.,.]$.
\begin{exa}\label{class1} The graded analogue of $sl(2,\mathbb{C})$ is defined as complex algebra with three generators $e_{1},e_{2}$ and $e_{3}$ satisfying the commutation relations $e_{1}e_{2}+e_{2}e_{1}=e_{3},~ e_{1}e_{3}+e_{3}e_{1}=e_{2},~ e_{2}e_{3}+e_{3}e_{2}=e_{1}.$
Let $sl(2,\mathbb{C})$ be $\mathbb{Z}_{2}^{3}$-graded linear space $sl(2,\mathbb{C})=sl(2,\mathbb{C})_{(1,1,0)}~\oplus sl(2,\mathbb{C})_{(1,0,1)}~~\oplus sl(2,\mathbb{C})_{(0,1,1)}$ with basis $e_{1} \in sl(2,\mathbb{C})_{(1,1,0)},~e_{2} \ sl(2,\mathbb{C})_{(1,0,1)},~~e_{3} \in sl(2,\mathbb{C})_{(0,1,1)}$. The homogeneous subspaces of $sl(2,\mathbb{C})$ graded by the elements of $\mathbb{Z}_{2}^{3}$ different from $(1,1,0),(1,0,1)$ and $(1,0,1)$ are zero and so are omitted. The bilinear multiplication $[.,.]:sl(2,\mathbb{C})\times sl(2,\mathbb{C}) \longrightarrow sl(2,\mathbb{C})$ defined, with respect to  the basis  $\{e_{1},e_{2},e_{3}\}$, by the formulas
 \begin{eqnarray*}
&&\ \ [e_{1},e_{1}]=e_{1}e_{1}-e_{1}e_{1}=0,~~ [e_{1},e_{2}]=e_{1}e_{2}+e_{2}e_{1}=e_{3},\\
&&\ \ [e_{2},e_{2}]=e_{2}e_{2}-e_{2}e_{2}=0,~~ [e_{1},e_{3}]=e_{1}e_{3}+e_{3}e_{1}=e_{2},\\
&&\ \ [e_{3},e_{3}]=e_{3}e_{3}-e_{3}e_{3}=0,~~ [e_{2},e_{3}]=e_{2}e_{3}+e_{3}e_{2}=e_{1},
\end{eqnarray*}
makes $sl(2,\mathbb{C})$ into a three-dimensional color-Lie algebra.\\
By using \cite[Theorem 3.14]{yuan2010hom}, we provide this Lie color algebra with a Hom structure, for that we consider a linear even map $\alpha:sl(2,\mathbb{C})\longrightarrow sl(2,\mathbb{C})$ checking $\alpha[x,y]=[\alpha(x),\alpha(y)],~~\forall~~x,y \in H(sl(2,\mathbb{C}))$ in such way $(sl(2,\mathbb{C}),[.,.]_{\alpha}=\alpha\circ[.,.],\alpha)$ is a color Hom-Lie algebra.
The morphism of $sl^{c}_{2}$ are given with respect to the basics $\{e_{1},e_{2},e_{2}\}$ by
\begin{eqnarray*}
\alpha(e_{1})&=&a_{1}e_{1}+a_{2}e_{2}+a_{3}e_{3},\\
\alpha(e_{2})&=&b_{1}e_{1}+b_{2}e_{2}+b_{3}e_{3},\\
\alpha(e_{3})&=&c_{1}e_{1}+c_{2}e_{2}+c_{3}e_{3},
\end{eqnarray*}
where $a_{i}, b_{i},c_{i} \in \mathbb{C}$.\\
Then, we obtain the following lists  of twisted color Hom-Lie algebra $(sl^{c}_{2},[.,.]_{\alpha}=\alpha \circ [.,.],\varepsilon,\alpha)$ in Table $1$:
\begin{center}{Table 1}\end{center}
$$\begin{tabular}{|c|c|c|c|}
  \hline
   $[e_{1},e_{2}]_{\alpha_{1}}=-e_{3}$ && $[e_{1},e_{2}]_{\alpha_{2}}=-e_{3}$&\\
   $[e_{1},e_{3}]_{\alpha_{1}}=-e_{2}$   &$\alpha_{1}=\left(
    \begin{array}{ccc}
       -1 & 0  & 0 \\
        0 & -1 & 0 \\
        0 & 0  & 1 \\
    \end{array}
  \right)$  &$ \hskip-0.3cm[e_{1},e_{3}]_{\alpha_{2}}=e_{2}$ &$\alpha_{2}=\left(
    \begin{array}{ccc}
      -1 & 0 & 0 \\
       0 & 1 & 0 \\
       0 & 0 & -1 \\
    \end{array}
  \right) $\\
   $\hskip-0.3cm[e_{2},e_{3}]_{\alpha_{1}}=e_{1}$ & &  $[e_{2},e_{3}]_{\alpha_{2}}=-e_{3}$ & \\
  \hline
  $[e_{1},e_{2}]_{\alpha_{3}}=-e_{3}$ & & $\hskip-0.3cm[e_{1},e_{2}]_{\alpha_{4}}=e_{3}$&\\
  $\hskip-0.3cm[e_{1},e_{3}]_{\alpha_{3}}=e_{2}$  &$\alpha_{3}=\left(
    \begin{array}{ccc}
        1 & 0  & 0 \\
        0 & -1 & 0 \\
        0 & 0 & -1 \\
    \end{array}
  \right)$ & $[e_{1},e_{3}]_{\alpha_{4}}=-e_{2}$ & $\alpha_{4}=\left(
    \begin{array}{ccc}
       1  & 0  & 0 \\
       0  & 1  & 0 \\
       0  & 0  & 1 \\
    \end{array}
  \right)$ \\
  $[e_{2},e_{3}]_{\alpha_{3}}=-e_{1}$& &$[e_{2},e_{3}]_{\alpha_{4}}=-e_{1}$&  \\ \hline
  $\hskip-0.3cm[e_{1},e_{2}]_{\alpha_{5}}=e_{2}$ & & $[e_{1},e_{2}]_{\alpha_{6}}=-e_{2}$&\\
  $\hskip-0.3cm[e_{1},e_{3}]_{\alpha_{5}}=e_{3}$ &$\alpha_{5}=\left(
    \begin{array}{ccc}
       -1 & 0 & 0 \\
       0 & 0 & 1 \\
       0 & -1 & 0 \\
    \end{array}
  \right)$  & $[e_{1},e_{3}]_{\alpha_{6}}=-e_{3}$ &$\alpha_{6}=\left(
    \begin{array}{ccc}
       -1 & 0  & 0 \\
       0 &  0  & -1 \\
       0 &  1  & 0 \\
    \end{array}
  \right)$  \\
  $\hskip-0.3cm[e_{2},e_{3}]_{\alpha_{5}}=e_{1}$ && $\hskip-0.3cm[e_{2},e_{3}]_{\alpha_{6}}=e_{1}$&\\ \hline
  $[e_{1},e_{2}]_{\alpha_{7}}=-e_{2}$&& $\hskip-0.3cm[e_{1},e_{2}]_{\alpha_{8}}=e_{2}$&\\
  $\hskip-0.3cm[e_{1},e_{3}]_{\alpha_{7}}=e_{3}$& $\alpha_{7}=\left(
    \begin{array}{ccc}
       1 & 0  & 0 \\
       0 & 0  & -1 \\
       0 & -1 & 0 \\
    \end{array}
  \right)$  &$[e_{1},e_{3}]_{\alpha_{8}}=-e_{3}$  & $\alpha_{8}=\left(
    \begin{array}{ccc}
       1 & 0 & 0 \\
       0 & 0 & 1 \\
       0 & 1 & 0 \\
    \end{array}
  \right)$ \\
  $[e_{2},e_{3}]_{\alpha_{7}}=-e_{1}$& &$[e_{2},e_{3}]_{\alpha_{8}}=-e_{1}$&\\\hline
  $\hskip-0.3cm[e_{1},e_{2}]_{\alpha_{9}}=e_{3}$& & $[e_{1},e_{2}]_{\alpha_{10}}=-e_{3}$&\\
 $\hskip-0.3cm[e_{1},e_{3}]_{\alpha_{9}}=e_{1}$ &$\alpha_{9}=\left(
    \begin{array}{ccc}
        0 & -1 & 0 \\
       -1 & 0  & 0 \\
        0 & 0  & 1 \\
    \end{array}
  \right)$ & $[e_{1},e_{3}]_{\alpha_{10}}=-e_{1}$ & $\alpha_{10}=\left(
    \begin{array}{ccc}
        0 & 1 & 0 \\
       -1 & 0 & 0 \\
        0 & 0 & -1 \\
    \end{array}
  \right)$ \\
  $\hskip-0.3cm[e_{2},e_{3}]_{\alpha_{9}}=e_{2}$&&$\hskip-0.3cm[e_{2},e_{3}]_{\alpha_{10}}=e_{2}$&\\\hline
  $\hskip-0.3cm[e_{1},e_{2}]_{\alpha_{11}}=e_{1}$& & $[e_{1},e_{2}]_{\alpha_{12}}=-e_{3}$&\\
  $[e_{1},e_{3}]_{\alpha_{11}}=-e_{2}$&$\alpha_{11}=\left(
    \begin{array}{ccc}
        0 & 0 &1\\
       0 & 1 & 0\\
        1 & 0& 0\\
    \end{array}
  \right)$ &$\hskip-0.3cm[e_{1},e_{3}]_{\alpha_{12}}=e_{1}$  & $\alpha_{12}=\left(
    \begin{array}{ccc}
        0 & -1 & 0 \\
        1 & 0 & 0 \\
        0 & 0 & -1 \\
    \end{array}
  \right)$ \\
  $[e_{2},e_{3}]_{\alpha_{11}}=-e_{2}$&&$[e_{2},e_{3}]_{\alpha_{12}}=-e_{2}$&\\\hline
\end{tabular}$$

$$\begin{tabular}{|c|c|c|c|}
  \hline
  $\hskip-0.3cm[e_{1},e_{2}]_{\alpha_{13}}=e_{3}$& &$\hskip-0.3cm[e_{1},e_{2}]_{\alpha_{14}}=e_{1}$&\\
  $[e_{1},e_{3}]_{\alpha_{13}}=-e_{1}$&$\alpha_{13}=\left(
    \begin{array}{ccc}
       0 & 1 & 0 \\
       1 & 0  & 0 \\
       0 & 0  & 1 \\
    \end{array}
  \right)$ & $\hskip-0.3cm[e_{1},e_{3}]_{\alpha_{14}}=e_{3}$ & $\alpha_{14}=\left(
    \begin{array}{ccc}
       0 & 0 & 1 \\
       -1 & 0 & 0 \\
       0 & -1 & 0 \\
    \end{array}
  \right)$ \\
  $[e_{2},e_{3}]_{\alpha_{13}}=-e_{2}$&&$\hskip-0.3cm[e_{2},e_{3}]_{\alpha_{14}}=e_{2}$&\\\hline
  $[e_{1},e_{2}]_{\alpha_{15}}=-e_{1}$& &$[e_{1},e_{2}]_{\alpha_{16}}=-e_{1}$&\\
   $[e_{1},e_{3}]_{\alpha_{15}}=-e_{3}$&$\alpha_{15}=\left(
    \begin{array}{ccc}
       0 & 0  & -1 \\
       -1 & 0  & 0 \\
       0 & 1 & 0 \\
    \end{array}
  \right)$ & $\hskip-0.3cm[e_{1},e_{3}]_{\alpha_{16}}=e_{3}$ & $\alpha_{16}=\left(
    \begin{array}{ccc}
       0 & 0 & -1 \\
       1 & 0 & 0 \\
       0 & -1 & 0 \\
    \end{array}
  \right)$ \\
  $\hskip-0.3cm[e_{2},e_{3}]_{\alpha_{15}}=e_{2}$&&$[e_{2},e_{3}]_{\alpha_{16}}=-e_{2}$&\\\hline
  $\hskip-0.3cm[e_{1},e_{2}]_{\alpha_{17}}=e_{1}$& &$\hskip-0.3cm[e_{1},e_{2}]_{\alpha_{18}}=e_{2}$&\\
  $[e_{1},e_{3}]_{\alpha_{17}}=-e_{3}$&$\alpha_{17}=\left(
    \begin{array}{ccc}
        0 & 0 & 1 \\
        1 & 0  & 0 \\
       0 & 1  & 0 \\
    \end{array}
  \right)$  & $\hskip-0.3cm[e_{1},e_{3}]_{\alpha_{18}}=e_{1}$ & $\alpha_{18}=\left(
    \begin{array}{ccc}
        0 & -1 & 0 \\
        0 & 0 & 1 \\
       -1 & 0 & 0 \\
    \end{array}
  \right)$ \\
  $[e_{2},e_{3}]_{\alpha_{17}}=-e_{2}$&&$\hskip-0.3cm[e_{2},e_{3}]_{\alpha_{18}}=e_{3}$&\\\hline
   $[e_{1},e_{2}]_{\alpha_{19}}=-e_{2}$& &$[e_{1},e_{2}]_{\alpha_{20}}=-e_{2}$&\\
   $[e_{1},e_{3}]_{\alpha_{19}}=-e_{1}$&$\alpha_{19}=\left(
    \begin{array}{ccc}
        0 &  1 & 0 \\
        0 & 0 & -1 \\
       -1 &  0 & 0 \\
    \end{array}
  \right)$ &$\hskip-0.3cm[e_{1},e_{3}]_{\alpha_{20}}=e_{1}$  & $\alpha_{20}=\left(
    \begin{array}{ccc}
       0 & -1 & 0 \\
       0 & 0 &  -1 \\
       1 & 0 &  0 \\
    \end{array}
  \right)$ \\
  $\hskip-0.3cm[e_{2},e_{3}]_{\alpha_{19}}=e_{3}$&&$[e_{2},e_{3}]_{\alpha_{20}}=-e_{3}$&\\\hline
  $\hskip-0.3cm[e_{1},e_{2}]_{\alpha_{21}}=e_{2}$& & $\hskip-0.3cm[e_{1},e_{2}]_{\alpha_{22}}=e_{1}$&\\
  $[e_{1},e_{3}]_{\alpha_{21}}=-e_{1}$&$\alpha_{21}=\left(
    \begin{array}{ccc}
       0 & 1 & 0 \\
       0 & 0  & 1 \\
       1 & 0  & 0 \\
    \end{array}
  \right)$ & $\hskip-0.3cm[e_{1},e_{3}]_{\alpha_{22}}=e_{2}$ &$\alpha_{22}=\left(
    \begin{array}{ccc}
       0 & 0 & 1 \\
       0 & -1 & 0 \\
       -1 & 0 & 0 \\
    \end{array}
  \right)$  \\
  $[e_{2},e_{3}]_{\alpha_{21}}=-e_{3}$&&$\hskip-0.3cm[e_{2},e_{3}]_{\alpha_{22}}=e_{3}$&\\\hline
  $[e_{1},e_{2}]_{\alpha_{23}}=-e_{1}$& &$[e_{1},e_{2}]_{\alpha_{24}}=-e_{1}$&\\
  $[e_{1},e_{3}]_{\alpha_{23}}=-e_{2}$&$\alpha_{23}=\left(
    \begin{array}{ccc}
       0 & 0  & -1 \\
       0 & 1 & 0 \\
       -1 & 0  & 0 \\
    \end{array}
  \right)$  & $\hskip-0.3cm[e_{1},e_{3}]_{\alpha_{24}}=e_{2}$ & $\alpha_{24}=\left(
    \begin{array}{ccc}
       0 & 0 & -1 \\
       0 & -1 & 0 \\
       1 & 0 & 0 \\
    \end{array}
  \right)$ \\
  $\hskip-0.3cm[e_{2},e_{3}]_{\alpha_{23}}=e_{3}$&&$[e_{2},e_{3}]_{\alpha_{24}}=-e_{3}$&\\
  \hline
\end{tabular}$$
\end{exa}
\begin{rem}
The morphisms algebras of $sl_{2}$ viewed as a color Lie algebra are all automorphisms.
\end{rem}
\begin{exa}\label{class2} The graded analogue of the Lie algebra of the group of plane of motions is defined as a complex algebra with three generators $e_{1},e_{2}$ and $e_{3}$ satisfying the commutation relations $e_{1}e_{2}+e_{2}e_{1}=e_{3},~~e_{1}e_{3}+e_{3}e_{1}=e_{2},~~e_{2}e_{3}+e_{3}e_{2}=0.$ The linear space $\mathcal{A}$ spanned by $e_{1},e_{2},e_{3}$ can be made into a $\Gamma$-graded $\varepsilon$-Lie algebra. The grading group $\Gamma$ and the commutation factor $\varepsilon$ are the same as in Example $\ref{class1}$. But the multiplication $[.,.]$ is different and is defined by
\begin{eqnarray*}
&&[e_{1},e_{1}]=e_{1}e_{1}-e_{1}e_{1}=0,~~ [e_{1},e_{2}]=e_{1}e_{2}+e_{2}e_{1}=e_{3},\\
&&[e_{2},e_{2}]=e_{2}e_{2}-e_{2}e_{2}=0,~~ [e_{1},e_{3}]=e_{1}e_{3}+e_{3}e_{1}=e_{2},\\
&&[e_{3},e_{3}]=e_{3}e_{3}-e_{3}e_{3}=0,~~ [e_{2},e_{3}]=e_{2}e_{3}+e_{3}e_{2}=0.
\end{eqnarray*}
By using   \cite[Theorem 3.14]{yuan2010hom}, we provide the following color Lie  algebras with a Hom structure. We consider an even  linear  map $\alpha:\mathcal{A}\longrightarrow \mathcal{A}$ checking $\alpha[x,y]=[\alpha(x),\alpha(y)]$ for all $x,y \in \mathcal{H}(\mathcal{A})$ in such way $(\mathcal{A},[.,.]_{\alpha}=\alpha\circ[.,.],\alpha)$ is a color Hom-Lie algebra. Then, in Table $2$, we obtain the following color Hom-Lie algebras :

\begin{center}{Table 2}\end{center}
$$\begin{tabular}{|c|c|c|c|}
  \hline
  $\hskip-0.05cm[e_{1},e_{2}]_{\alpha_{1}}=-b_{2} e_{2}-b_{1} e_{3}$ & &$[e_{1},e_{2}]_{\alpha_{2}}=b_{2} e_{2}+b_{1} e_{3}$&\\
  $\hskip-0.5cm[e_{1},e_{3}]_{\alpha_{1}}=b_{1} e_{2}+b_{2} e_{3}$ & $\alpha_{1}=\left(
    \begin{array}{ccc}
       -1 & 0 & 0 \\
      0 & b_{1} & -b_{2} \\
      0 & b_{2} & -b_{1} \\
    \end{array}
\right)$ & $[e_{1},e_{3}]_{\alpha_{2}}=b_{1} e_{2}+b_{2} e_{3}$&$\alpha_{2}=\left(
    \begin{array}{ccc}
      1 & 0 & 0 \\
      0 & b_{1} & b_{2} \\
      0 & b_{2} & b_{1} \\
    \end{array}
  \right)$ \\
   $\hskip-2cm[e_{2},e_{3}]_{\alpha_{1}}=0$& &$\hskip-1.5cm[e_{2},e_{3}]_{\alpha_{2}}=0$&\\ \hline
   $\hskip-2cm[e_{1},e_{2}]_{\alpha_{3}}=0$& &$\hskip-1.5cm[e_{1},e_{2}]_{\alpha_{4}}=0$&\\
  $\hskip-2cm[e_{1},e_{3}]_{\alpha_{3}}=0$& $\alpha_{3}=\left(
    \begin{array}{ccc}
       0 & 0 & 0 \\
      a_{1} & 0 & 0 \\
      a_{2} & 0 & 0 \\
    \end{array}
\right)$ & $\hskip-1.5cm[e_{1},e_{3}]_{\alpha_{4}}=0$ & $\alpha_{4}=\left(
    \begin{array}{ccc}
      c_{1} & 0 & 0 \\
      0 & 0 & 0 \\
      0 & 0 & 0 \\
    \end{array}
  \right)$ \\
   $\hskip-0.4cm[e_{2},e_{3}]_{\alpha_{3}}=a_{1} e_{2}+a_{2} e_{3}$& &$\hskip-1cm[e_{2},e_{3}]_{\alpha_{4}}=c_{1} e_{1}$&\\ \hline
\end{tabular}$$
where $a_{i}, b_{i},c_{i} \in \mathbb{C},~~i=1,2$.
\end{exa}
\begin{rem} It is easy to check that the even bilinear multiplications in Examples $\ref{class1}$ and $\ref{class2}$ satisfy the three following axioms$:$\\
\textsf{$\Gamma-$grading axiom$:$}
$$[\mathcal{A}_{\gamma},\mathcal{A}_{\eta}]\subseteq \mathcal{A}_{\gamma+\eta},~~\forall~~ \gamma,\eta \in \Gamma.$$
\textsf{$\varepsilon$-skew-symmetry condition$:$}
$$[x,y]=-\varepsilon(x,y)[y,x] .$$
\textsf{$\varepsilon$-Jacoby identity$:$}
$$\circlearrowleft_{x,y,z}\varepsilon(z,x)[x,[y,z]]=0$$
for nonzero homogeneous $x,y$ and $z$ and where $\circlearrowleft_{x,y,z}$ denotes summation over the cyclic permutation on $x,y,z$.
\end{rem}
\begin{center}\section{$\Gamma$-graded version Hartwig-Larsson-Silvestrov Theorem}\end{center}
In this section, we describe and prove the Hartwig-Larsson-Silvestrov Theorem \cite[Theorem 5]{hartwig2006deformations} for nongraded algebras, in the case of $\Gamma$-graded algebra. A $\mathbb{Z}_2$-graded version of this theorem was given in \cite{ammar2010hom}. We aim to consider it deeply for color algebras case.\\
Let $\mathcal{A}= \bigoplus_{\gamma \in \Gamma}\mathcal{A}_{\gamma}$
be an associative color algebra. We assume that $\mathcal{A}$
is color commutative, that is for homogeneous elements $x , y $ in $\mathcal{A}$, the identity $xy = \varepsilon(x,y) yx $ holds. Let $\sigma: \mathcal{A} \longrightarrow \mathcal{A}$ be an even color algebra endomorphism of $\mathcal{A}$. Then $\mathcal{A}$ is color bimodule over itself, the left
$($resp. right$)$ action is defined by $x\cdot_{l} y = \sigma(x)y$ $($resp. $y\cdot_{r}x = yx)$. For simplicity, we denote the module multiplication by a dot and the color multiplication by juxtaposition. In the sequel, the elements of $\mathcal{A}$ are supposed to be homogeneous.
\begin{df} Let $d \in \Gamma$. A color $\sigma$-derivation $\Delta_{d}$ on $\mathcal{A}$ is an endomorphism satisfying
\begin{eqnarray*}
(CD_{1}):~~ \Delta_{d}(\mathcal{A}_{\gamma})&\subseteq & \mathcal{A}_{\gamma+d},\\
(CD_{2}):~~ \Delta_{d}(xy)&=& \Delta_{d}(x)y+ \varepsilon(d,x) \sigma(x)\Delta_{d}(y) ,~~\forall~~x , y \in \mathcal{H(G)}.
\end{eqnarray*}
In particular for $d=0$, we have $\Delta(xy)= \Delta(x)y+ \sigma(x)\Delta(y)$, then $\Delta$ is called even color $\sigma$-derivation.
The set of all color $\sigma$-derivation is denoted by
$Der_{\sigma}^{\varepsilon}(\mathcal{A})= \bigoplus \sum_{\gamma \in \Gamma}{Der_{\gamma}}^{\varepsilon}(\mathcal{A})$.\\
The structure of $\mathcal{A}$-color module of $Der_{\sigma}(\mathcal{A})$ is as usual. Let $\Delta \in Der_{\sigma}(\mathcal{A})$, the annihilator $Ann(\Delta)$ is the set of all $x \in \mathcal{H(G)}$ such that $x\cdot \Delta=0$. We set $\mathcal{A}\cdot \Delta= \{ x\cdot \Delta~:~x \in \mathcal{H(G)} \}$ to be a color $\mathcal{A}$-module of $Der_{\sigma}(\mathcal{A})$.\\
Let $\sigma: \mathcal{A} \longrightarrow \mathcal{A}$ be a fixed endomorphism, $\Delta$ an even color $\sigma$-derivation $\Delta \in Der_{\sigma}^{\varepsilon}(\mathcal{A})$ and $\delta $ be an element in $\mathcal{A}$. Then
\end{df}
\begin{thm} If
\begin{equation}\label{ABC}
    \sigma(Ann(\Delta))\subseteq Ann(\Delta)
\end{equation}
holds, the map $[.,.]_{\sigma}:\mathcal{A}\cdot\Delta\times\mathcal{A}\cdot\Delta\longrightarrow\mathcal{A}\cdot\Delta$ defined by setting$:$
\begin{equation}\label{CDE}
    [x\cdot \Delta,y\cdot \Delta]_{\sigma}= (\sigma(x)\cdot \Delta)\circ(y\cdot \Delta)- \varepsilon(x,y)(\sigma(y)\cdot \Delta)\circ(x\cdot \Delta)
\end{equation}
where $\circ$ denotes the composition of functions, is a well-defined color algebra bracket on the $\Gamma$-graded space $\mathcal{A}\cdot \Delta$ and satisfies the following identities for $x, y \in \mathcal{H(G)}$
\begin{eqnarray}\label{principe}
    [x\cdot\Delta,y\cdot\Delta]_{\sigma}&=&(\sigma(x)\Delta(y)- \varepsilon(x,y)\sigma(y)\Delta(x))\cdot\Delta ,~~\forall~~ x, y \in \mathcal{H(G)}.
\end{eqnarray}
\begin{eqnarray}\label{FGH}
    [x\cdot\Delta,y\cdot\Delta]_{\sigma}&=& -\varepsilon(x,y)[y\cdot\Delta,x\cdot\Delta]_{\sigma} ,~~\forall~~ x , y \in \mathcal{A}.
\end{eqnarray}
In addition, if
\begin{equation}\label{IJKL}
    \Delta(\sigma(x))= \delta\sigma(\Delta(x)) ,~~\forall~~ x \in \mathcal{H(G)}
\end{equation}
holds, then\\
\begin{equation}\label{MNOP}
\circlearrowleft_{x,y,z}\varepsilon(z,x)\Big([\sigma(x)\cdot\Delta,[y\cdot\Delta,z\cdot\Delta]_{\sigma}]_{\sigma}
+\delta[x\cdot\Delta,[y\cdot\Delta,z\cdot\Delta]_{\sigma}]_{\sigma}\Big)=0
,~~\forall~~ x , y , z \in \mathcal{H}(\mathcal{A})
\end{equation}
for all $x , y$ and $z$ in $\mathcal{H(G)}$.
\end{thm}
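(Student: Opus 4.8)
The plan is to make identity \eqref{principe} the central computational tool and to derive everything else from it, since it re-expresses each bracket as an element of $\mathcal{A}\cdot\Delta$ and thereby converts operator identities into identities inside the color-commutative algebra $\mathcal{A}$. So first I would establish \eqref{principe}. Writing out the composition $(\sigma(x)\cdot\Delta)\circ(y\cdot\Delta)$ by means of the $\sigma$-derivation rule $(CD_{2})$ for $\Delta$ (with $d=0$) gives, for $a\in\mathcal{H}(\mathcal{A})$,
$$(\sigma(x)\cdot\Delta)\circ(y\cdot\Delta)(a)=\sigma(x)\Delta(y)\Delta(a)+\sigma(x)\sigma(y)\Delta^{2}(a),$$
and symmetrically for the second term. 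The two second-order pieces $\sigma(x)\sigma(y)\Delta^{2}(a)$ and $\varepsilon(x,y)\sigma(y)\sigma(x)\Delta^{2}(a)$ cancel, because color-commutativity gives $\sigma(y)\sigma(x)=\varepsilon(y,x)\sigma(x)\sigma(y)$ together with $\varepsilon(x,y)\varepsilon(y,x)=1$; what survives is exactly $(\sigma(x)\Delta(y)-\varepsilon(x,y)\sigma(y)\Delta(x))\Delta(a)$, which is \eqref{principe}. This cancellation of the top-order term is the very reason the bracket stays inside the first-order operators $\mathcal{A}\cdot\Delta$.

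Next comes well-definedness, which is where hypothesis \eqref{ABC} enters. The formula \eqref{CDE} only makes sense once the assignment $x\cdot\Delta\mapsto\sigma(x)\cdot\Delta$ is well defined, i.e. once $\sigma$ maps $Ann(\Delta)$ into itself, and this is precisely \eqref{ABC}. To see that the value of the bracket is independent of the representatives it suffices, by bilinearity and \eqref{principe}, to show that $x\in Ann(\Delta)$ forces $[x\cdot\Delta,y\cdot\Delta]_{\sigma}=0$. For the first summand I would move $\sigma(x)$ to the right, $\sigma(x)\Delta(y)\Delta(a)=\varepsilon(x,y)\Delta(y)\sigma(x)\Delta(a)=0$, using $\sigma(x)\in Ann(\Delta)$ from \eqref{ABC}. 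For the second summand, applying $\Delta$ to $x\Delta(a)=0$ and using $(CD_{2})$ yields $\Delta(x)\Delta(a)+\sigma(x)\Delta^{2}(a)=0$; the last term again vanishes by \eqref{ABC}, so $\Delta(x)\Delta(a)=0$ and hence $\sigma(y)\Delta(x)\Delta(a)=0$. The $\Gamma$-grading is immediate since $\sigma,\Delta$ are even, so $[x\cdot\Delta,y\cdot\Delta]_{\sigma}\in\mathcal{A}_{\overline{x}+\overline{y}}\cdot\Delta$. The $\varepsilon$-skew-symmetry \eqref{FGH} then follows directly from \eqref{principe} by interchanging $x$ and $y$ and invoking $\varepsilon(x,y)\varepsilon(y,x)=1$.

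The heart of the theorem, and the step I expect to be the main obstacle, is the $\varepsilon$-Hom-Jacobi type identity \eqref{MNOP}, which I would reduce entirely to $\mathcal{A}$ through \eqref{principe}. Writing $[y\cdot\Delta,z\cdot\Delta]_{\sigma}=w\cdot\Delta$ with $w=\sigma(y)\Delta(z)-\varepsilon(y,z)\sigma(z)\Delta(y)$ and applying \eqref{principe} once more to the outer brackets, the summand collapses to $E_{x,y,z}\cdot\Delta$ with
$$E_{x,y,z}=\big(\sigma^{2}(x)+\delta\sigma(x)\big)\Delta(w)-\varepsilon(x,y)\varepsilon(x,z)\,\delta\,\sigma(w)\big(\sigma(\Delta(x))+\Delta(x)\big),$$
where the decisive use of \eqref{IJKL} is to rewrite every occurrence of $\Delta(\sigma(\cdot))$ as $\delta\sigma(\Delta(\cdot))$; comparing degrees in \eqref{IJKL} also forces $\overline{\delta}=0$, so that $\delta$ is even and central, which is what allows the two outer contributions to be combined above.

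From there the plan is to expand $\Delta(w)$ and $\sigma(w)$ via $(CD_{2})$ and the multiplicativity of $\sigma$, and to sort the resulting monomials by their type, namely the number of factors $\delta$ together with the pattern of $\sigma^{2},\ \sigma\Delta,\ \Delta,\ \Delta^{2}$. I would then show that within each type the three cyclic copies cancel in conjugate pairs: each cancellation is an instance of reordering two homogeneous factors by color-commutativity and absorbing the produced bicharacter value against the cyclic weight $\varepsilon(z,x)$, using $\varepsilon(a,b)\varepsilon(b,a)=1$ and $\varepsilon(a,b+c)=\varepsilon(a,b)\varepsilon(a,c)$, exactly as the highest-order terms were killed in \eqref{principe}. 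I expect the genuine difficulty here to be organizational rather than conceptual: the challenge is to keep track of the large number of monomials and to match each one with the correct cyclically-permuted partner so that the signs and bicharacter factors line up. The two structural inputs that drive the whole computation are \eqref{IJKL}, which collapses the $\Delta\sigma$ terms, and the color-commutativity of $\mathcal{A}$, which governs every reordering.
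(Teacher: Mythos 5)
Your treatment of \eqref{principe}, of well-definedness, and of the $\varepsilon$-skew-symmetry is correct and follows the same route as the paper; in fact your well-definedness argument (reduce by bilinearity to a homogeneous $x\in Ann(\Delta)$, kill the first summand by commuting $\sigma(x)$ past $\Delta(y)$ and using \eqref{ABC}, and kill the second by applying $\Delta$ to $x\Delta(a)=0$) is cleaner than the computation printed in the paper. Your reduction of \eqref{MNOP} to the vanishing of $\circlearrowleft_{x,y,z}\varepsilon(z,x)E_{x,y,z}\cdot\Delta$ with $E_{x,y,z}=(\sigma^{2}(x)+\delta\sigma(x))\Delta(w)-\varepsilon(x,y+z)\delta\sigma(w)(\sigma(\Delta(x))+\Delta(x))$ is also exactly what the paper does, as is the observation that $\delta$ must be of degree $0$.

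The gap is in the final cancellation mechanism. Your claim that, after expanding $\Delta(w)$ and $\sigma(w)$, the monomials cancel \emph{within each type} under cyclic summation, each cancellation being a single reordering of two factors by color-commutativity, is false for the expansion you describe. Concretely, applying $(CD_{2})$ directly to $\Delta(\sigma(y)\Delta(z))$ gives $\Delta(\sigma(y))\Delta(z)+\sigma^{2}(y)\Delta^{2}(z)$, so the piece $\delta\sigma(x)\Delta(w)$ contributes the type $\bigl(\delta;\sigma,\sigma^{2},\Delta^{2}\bigr)$ terms $\delta\sigma(x)\sigma^{2}(y)\Delta^{2}(z)-\varepsilon(y,z)\delta\sigma(x)\sigma^{2}(z)\Delta^{2}(y)$; pairing the first with its cyclic partner leaves
\begin{equation*}
\varepsilon(z,x)\,\delta\bigl(\sigma(x)\sigma^{2}(y)-\sigma^{2}(x)\sigma(y)\bigr)\Delta^{2}(z),
\end{equation*}
which no reordering by the bi-character can remove, since $\sigma(x)\sigma^{2}(y)$ and $\sigma^{2}(x)\sigma(y)$ are genuinely different elements. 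Likewise the $\bigl(\delta^{2};\sigma,\sigma\Delta,\Delta\bigr)$ terms $\delta^{2}\sigma(x)\sigma(\Delta(y))\Delta(z)$ coming from the same piece have no cyclic partner at all. The missing ingredient is that $(CD_{2})$ together with color-commutativity of $\mathcal{A}$ yields the two-sided identity $\Delta(a)b+\sigma(a)\Delta(b)=a\Delta(b)+\Delta(a)\sigma(b)$ (expand $\Delta(ab)=\varepsilon(a,b)\Delta(ba)$ both ways); using it on the $\delta[x\cdot\Delta,\cdot]_{\sigma}$ piece, i.e.\ writing $\Delta(\sigma(y)\Delta(z))=\varepsilon(y,z)\bigl(\Delta^{2}(z)\sigma(y)+\sigma(\Delta(z))\Delta(\sigma(y))\bigr)$, trades the two problematic types for the types $\bigl(\delta;\sigma,\sigma,\Delta^{2}\bigr)$ and $\bigl(\delta^{2};\sigma,\sigma\Delta,\sigma\Delta\bigr)$, and only these do cancel cyclically in the way you describe. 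This is precisely the regrouping the paper performs implicitly by expanding the inner $\Delta$ in different orders in the two outer brackets; without it your bookkeeping stalls on non-cancelling leftovers, so you should state and use this identity explicitly before sorting monomials.
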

\begin{proof} We must first show that $[.,.]_{\sigma}$ is a well-defined function. That is,\\ if $x_{1}\cdot\Delta=x_{2}\cdot\Delta$, then
\begin{equation}\label{sigma1}
    [x_{1}\cdot\Delta,y\cdot\Delta]_{\sigma}=[x_{2}\cdot\Delta,y\cdot\Delta]_{\sigma}
\end{equation}
and
\begin{equation}\label{sigma2}
    [y\cdot\Delta,x_{1}\cdot\Delta]_{\sigma}=[y\cdot\Delta,x\cdot\Delta]_{\sigma}
\end{equation}
for $x_{1}, x_{2},y \in \mathcal{H}(\mathcal{A})$. Now  $x_{1}\cdot\Delta=x_{2}\cdot\Delta$ is equivalent to  $(x_{1}-x_{2}) \in Ann(\Delta)$. Therefore, using the assumption $(\ref{ABC})$, we also have $\sigma(x_{1}-x_{2})\in Ann(\Delta)$. Then since $|x_{1}|=|x_{2}|$ and  $\sigma(x_{1}-x_{2})\in Ann(\Delta)$, we obtain$:$
\begin{eqnarray*}
 [x_{1}\cdot\Delta,y\cdot\Delta]_{\sigma}-[x_{2}\cdot\Delta,y\cdot\Delta]_{\sigma}  &=&(\sigma(x_{1})\cdot\Delta)\circ(y\cdot\Delta)-\varepsilon(x_{1},y)(\sigma(y)\cdot\Delta)\circ(x_{1}\cdot\Delta)  \\
   &=& -(\sigma(x_{2})\cdot\Delta)\circ(y\cdot\Delta)+\varepsilon(x_{2},y)(\sigma(y)\cdot\Delta)\circ(x_{2}\cdot\Delta) \\
   &=& (\sigma(x_{1}-x_{2})\cdot\Delta)\circ(y\cdot\Delta)-\varepsilon(x_{1},y)(\sigma(y)\cdot\Delta)\\
   &\circ&((x_{1}-\varepsilon(y,x_{1})
\varepsilon(x_{2},y)x_{2})\cdot\Delta).
\end{eqnarray*}
Which shows $(\ref{sigma1})$. The proof of $(\ref{sigma2})$ is analogous.\\
Next we prove $(\ref{ABC})$, which also shows that $\mathcal{A}\cdot\Delta$ is closed under $[.,.]_{\sigma}$.\\
Let $x,y,z \in \mathcal{H(G)}$ be arbitrary. Then, since $\Delta$ is an even color $\sigma$-derivation on $\mathcal{A}$ we have$:$
\begin{align*}
  [x\cdot\Delta,y\cdot\Delta]_{\sigma}(z) &=(\sigma(x)\cdot\Delta)(y\cdot\Delta)(z)- \varepsilon(x,y)(\sigma(y)\cdot\Delta)(x\cdot\Delta)(z)  \\
   &=\sigma(x)\cdot\Delta(y\Delta(z)) - \varepsilon(x,y)\sigma(y)\Delta(x\Delta(z))  \\
   &= \sigma(x)\Big(\Delta(y)\Delta(z)+ \sigma(y)\Delta^{2}(z)\Big)-\varepsilon(x,y)\sigma(y)\Big(\Delta(x)\Delta(z)+\sigma(x)\Delta^{2}(z)\Big) \\
   &= \Big(\sigma(x)\Delta(y)-\varepsilon(x,y)\sigma(y)(\Delta(x)\Big)\Delta(z)+ \Big(\sigma(x)\sigma(y) - \varepsilon(x,y)\sigma(y)\sigma(x)\Big)\Delta^{2}(z).
\end{align*}
Since $\mathcal{A}$ is color commutative, the last term is zero. Thus $(\ref{ABC})$ is true. The $\varepsilon$-skew-symmetry condition $(\ref{FGH})$ is clear from the Definition $\ref{CDE}$. Using the linearity of $\sigma$ and $\Delta$ and the definition of $[.,.]_{\sigma}$ on the formula $(\ref{ABC})$, it is also easy to see that $[.,.]_{\sigma}$ is an even bilinear map.\\
It remains to prove $(\ref{MNOP})$. Using $(\ref{ABC})$ and that $\Delta$ is an even color $\sigma$-derivation on $\mathcal{A}$, we get
\begin{eqnarray}\label{opp1}
  \varepsilon(z,x)[\sigma(x)\cdot\Delta,[y\cdot\Delta,z\cdot\Delta]_{\sigma}]_{\sigma}\nonumber&=& \varepsilon(z,x)[\sigma(x)\cdot\Delta,(\sigma(y)\cdot\Delta(z)- \varepsilon(y,z)\sigma(z)\cdot\Delta(y))\cdot\Delta]_{\sigma}\nonumber\\
   &=& \varepsilon(z,x)\Big(\sigma^{2}(x)\Delta(\sigma(y)\Delta(z)- \varepsilon(y,z)\sigma^{2}(x)\sigma(z)\Delta(y) \nonumber\\
   &+&\sigma(\varepsilon(y,z) \varepsilon(x,z+y)\sigma(z)\Delta(y)-\varepsilon(x,z+y)\sigma(y)\Delta(z))\Delta(\sigma(x)) \Big)\cdot\Delta \nonumber \\
   &=& \varepsilon(z,x) \Big(\sigma^{2}(x) \Delta(\sigma(y)\Delta(z)+\sigma^{2}(x)\sigma^{2}(y)\Delta^{2}(z) \nonumber \\
   &-& \varepsilon(y,z)\Delta(\sigma(z))\Delta(y)- \varepsilon(y,z) \sigma^{2}(z)\Delta^{2}(y))\nonumber\\
&-&( \varepsilon(x,y)\sigma^{2}(y)\sigma(\Delta(z))-\varepsilon(y,z)\varepsilon(x,z+y)\sigma^{2}(z)\sigma(\Delta(y)))\Delta(\sigma(x)\Big)\cdot\Delta,
\end{eqnarray}
where $\sigma^{2}=\sigma\circ\sigma$ and $\Delta^{2}=\Delta \circ \Delta$. Applying cyclic summation to the second and fourth term in $(\ref{opp1})$ and since  $\mathcal{A}$ is color commutative, we get
\begin{align*}&\circlearrowleft_{x,y,z}\varepsilon(z,x)\Big((\sigma^{2}(x) \Delta(\sigma(y)\Delta^{2}(z)- \varepsilon(y,z)\sigma^{2}(x)\sigma(z)\Delta^{2}(y))\cdot\Delta \Big)\\
&=\circlearrowleft_{x,y,z}\varepsilon(z,x)\Big( (\sigma^{2}(x) \Delta(\sigma(y)\Delta^{2}(z)-\varepsilon(x,y)\sigma^{2}(y)\sigma^{2}(x)\Delta^{2}(z))\cdot\Delta \Big)\\
&=0.\end{align*}
Similarly, if we apply cyclic summation to the fifth and sixth term in $(\ref{opp1})$ and use the relation $(\ref{IJKL})$ we obtain$:$
\begin{align*}
   & \circlearrowleft_{x,y,z}\varepsilon(z,x)\Big(-\varepsilon(x,y)\sigma^{2}(y)\sigma(\Delta(z))\Delta(\sigma(x))
   +\varepsilon(y,z) \varepsilon(x,y+z)\sigma^{2}(z)\Delta^{2}(y)\Delta(\sigma(x))\Big) \\
   &=\circlearrowleft_{x,y,z}\varepsilon(z,x)\Big(-\varepsilon(x,y)\sigma^{2}(y)\sigma(\Delta(z))\delta\sigma(\Delta(x))
   +\varepsilon(y,z) \varepsilon(x,y+z)\sigma^{2}(z)\Delta^{2}(y)\delta\sigma(\Delta(x))\Big)\\
   &=\delta \Big(\circlearrowleft_{x,y,z}\varepsilon(z,x)((-\varepsilon(x,y)\sigma^{2}(y)\sigma(\Delta(z))\sigma(\Delta(x))+ \varepsilon(x,y)\sigma^{2}(y)\sigma(\Delta(x))\sigma(\Delta(z)))\cdot\Delta )\Big) \\
   &= 0,
\end{align*}
where we again use the color commutativity of $\mathcal{A}$. Consequently, the only terms in the right hand side of $(\ref{opp1})$ which do not vanish when take cyclic summation are
\begin{align}\label{opp2}&\circlearrowleft_{x,y,z}\varepsilon(z,x)[\sigma(x)\cdot\Delta,[y\cdot\Delta,z\cdot\Delta]_{\sigma}]_{\sigma}\nonumber\\
    &=\circlearrowleft_{x,y,z}\varepsilon(z,x)\Big((\sigma^{2}(x) \Delta(\sigma(y))\Delta(z)-\varepsilon(y,z)\sigma^{2}(x)\Delta(\sigma(z)\Delta(y))\cdot\Delta \Big).
\end{align}
We now consider the other term in $(\ref{MNOP})$. First that from $(\ref{IJKL})$ we have
$$[y\cdot\Delta, z\cdot\Delta]_{\sigma}=(\sigma(y)\Delta(z)- \varepsilon(y,z)\sigma(z)\Delta(y))\cdot \Delta. $$
Using first this and then $(\ref{principe})$, we get
\begin{eqnarray*}
 &&\ \  \delta[x\cdot\Delta,[y\cdot\Delta,z\cdot\Delta]_{\sigma}]_{\sigma} =\delta[x\cdot\Delta, (\sigma(y)\Delta(z)
  - \varepsilon(y,z)\sigma(z)\Delta(y))\cdot\Delta ]_{\sigma}  \\
  &&\ \ = \delta\Big(\sigma(x)\Delta(\sigma(y)\Delta(z)-\varepsilon(y,z)\sigma(z)\Delta(y))
    - \varepsilon(x,y+z)\sigma(\sigma(y)\Delta(z)-\varepsilon(y,z)\sigma(z)\Delta(y)))\cdot\Delta(x)\Big) \\
   &&\ \ =\delta\Big((\varepsilon(y,z)\sigma(x)\Delta^{2}(z)\sigma(y)+\varepsilon(y,z)\sigma(x)\sigma(\Delta(z))\Delta(\sigma(y))  \\
   &&\ \  -\sigma(x)\Delta^{2}(y)\sigma(z)-\sigma(x)\sigma(\Delta(y))\Delta(\sigma(z))-\varepsilon(x,y)\varepsilon(y,z)
    - \sigma(\Delta(z))\sigma^{2}(y)\Delta(x)+\sigma(\Delta(y))\sigma^{2}(z)\Delta(x))\cdot \Delta\Big) \\
   &&\ \  = \varepsilon(y,z)\delta\sigma(x)\Delta^{2}(z)\sigma(y)+\varepsilon(y,z)\delta\sigma(x)\sigma(\Delta(z))\Delta(\sigma(y))\\
   &&\ \  -\delta\sigma(x)\Delta^{2}(y)\sigma(z)-\delta\sigma(x)\sigma(\Delta(y))\Delta(\sigma(z))-\varepsilon(x,y)\varepsilon(y,z)
    -\delta\sigma(\Delta(z))\sigma^{2}(y)\Delta(x)+\delta\sigma(\Delta(y))\sigma^{2}(z)\Delta(x))\cdot \Delta.
\end{eqnarray*}
Using $(\ref{IJKL})$, this equal to
\begin{eqnarray*}
   &&\Big( \varepsilon(y,z)\delta\sigma(x)\Delta^{2}(z)\sigma(y)+\varepsilon(y,z)\sigma(x)\Delta(\sigma(z))\Delta(\sigma(y))
   - \delta\sigma(x)\Delta^{2}(y)\sigma(z)-\delta\sigma(x)\sigma(\Delta(y))\Delta(\sigma(z))\\
   &&\ \ -\varepsilon(x,y)\varepsilon(y,z)\Delta(\sigma(z))\sigma^{2}(y)\Delta(x)+\Delta(\sigma(y))\sigma^{2}(z)\Delta(x)\Big)\cdot \Delta \\
   &&\ \ = \Big(\varepsilon(y,z)\delta\sigma(x)\Delta^{2}(z)\sigma(y)-\delta\sigma(x)\Delta^{2}(y)\sigma(z) -\varepsilon(x,y)\varepsilon(y,z)
   \Delta(\sigma(z))\sigma^{2}(y)\Delta(x) +\Delta(\sigma(y))\sigma^{2}(z)\Delta(x)\Big)\cdot\Delta.
\end{eqnarray*}
The first two terms of this last expression vanish after a cyclic summation, so we get
\begin{align}\label{opp3}&\circlearrowleft_{x,y,z}\varepsilon(z,x)\delta[x\cdot\Delta,[y\cdot\Delta,z\cdot\Delta]_{\sigma}]_{\sigma}\nonumber\\&=
    \circlearrowleft_{x,y,z}\varepsilon(z,x)\Big((-\varepsilon(x,y)\varepsilon(y,z)\Delta(\sigma(z))\sigma^{2}(y)\Delta(x)+ \Delta(\sigma(y))\sigma^{2}(z)\Delta(x))\cdot\Delta \Big).
\end{align}
Finally, combining this with $(\ref{opp2})$ we deduce
\begin{align*}&\circlearrowleft_{x,y,z}\varepsilon(z,x)([\sigma(x)\cdot\Delta,[y\cdot\Delta,z\cdot\Delta]_{\sigma}]_{\sigma}
+\delta[x\cdot\Delta,[y\cdot\Delta,z\cdot\Delta]_{\sigma}]_{\sigma})\\
&=\circlearrowleft_{x,y,z}\varepsilon(z,x)[\sigma(x)\cdot\Delta,[y\cdot\Delta,z\cdot\Delta]_{\sigma}]_{\sigma}
+\circlearrowleft_{x,y,z}\varepsilon(z,x)\delta[x\cdot\Delta,[y\cdot\Delta,z\cdot\Delta]_{\sigma}]_{\sigma}\\
&=\circlearrowleft_{x,y,z}\varepsilon(z,x)\Big((\sigma^{2}(x) \Delta(\sigma(y))\Delta(z)-\varepsilon(y,z)\sigma^{2}(x)\Delta(\sigma(z)\Delta(y))\cdot\Delta \Big)\\
&+\circlearrowleft_{x,y,z}\varepsilon(z,x)\Big((-\varepsilon(x,y)\varepsilon(y,z)\Delta(\sigma(y))\sigma^{2}(x)\Delta(z)+ \Delta(\sigma(z))\sigma^{2}(x)\Delta(y))\cdot\Delta \Big)\\
&=0,\end{align*}
as was to be shown. The proof is complete.
\end{proof}
\begin{center}\section{Cohomology and Representations of color Hom-Lie algebras}\end{center}
In this section we define a family of cohomology complexes of color Hom-Lie algebras, discuss the representations in connection with cohomology   and provide an example of computation.
\subsection{Cohomology of color Hom-Lie algebras}
We extend to color Lie algebras, the concept of $\A$-module  introduced in \cite{sadaoui2011cohomology,benayadi2010hom,sheng2010representations}, and then define a family of cohomology complexes for colr Hom-Lie algebras.\\

Let $(\mathcal{A},[.,.],\varepsilon,\alpha)$ be a color Hom-Lie algebra, $(M,\beta)$ be a pair of $\Gamma$-graded vector space $M$ and an even homomorphism of vectors spaces $\beta:M \longrightarrow M$, and
$$\begin{array}{cccc}
                                                [.,.]_{M}: & \mathcal{A} \times M  & \longrightarrow  & M \\
                                                                   & (g,m) & \longmapsto  & [g,m]_{M}
                                              \end{array}$$
 be an even bilinear map satisfying $[\mathcal{A}_{\gamma_{1}},M_{\gamma_{2}}]\subseteq M_{\gamma_{1}+\gamma_{2}} $ for all $\gamma_{1},\gamma_{2} \in \Gamma$.
\begin{df} The triple $(M,[.,.]_{M},\beta)$ is said to be $\mathcal{A}$-module if the even bilinear map $[.,.]_{M}$ satisfies
\begin{equation}\label{module1}
   \beta([x,m]_{M})= [\alpha(x),\beta(m)]_{M}
\end{equation}
and
\begin{equation}\label{module2}
    [[x,y],\beta(m)]_{M}=[\alpha(x),[y,m]]_{M}-\varepsilon(x,y)[\alpha(y),[x,m]]_{M}.
\end{equation}
\end{df}

The cohomology of color Lie algebras was introduced in \cite{scheunert1979generalized}. In the following, we  define cohomology complexes of color Hom-Lie algebras.\\
The set $C^{n}(\mathcal{A},M)$ of $n$-cochains on space $\mathcal{A}$ with values in $M$, is the set of $n$-linear maps $f:\mathcal{A}^{n}\longrightarrow M$ satisfying
$$f(x_{1},...,x_{i},x_{i+1},...,x_{n})=-\varepsilon(x_{i},x_{i+1})f(x_{1},...,x_{i+1},x_{i},...,x_{n}),~~\forall~~ 1\leq i \leq n-1.$$
For $n=0$, we have $C^{0}(\mathcal{A},M)=M$.\\
The map $f$ is called even (resp. of degree $\gamma$) when $f(x_{1},...,x_{i},...,x_{n}) \in M_{0}$ for all elements
$(x_{1},...,x_{n}) \in \mathcal{A}^{\otimes n}$ (resp. $f(x_{1},...,x_{i},...,x_{n}) \in M_{\gamma}$ for all elements $(x_{1},...,x_{n}) \in \mathcal{A}^{\otimes n}$ of degree $\gamma$).\\
A  $n$-cochain on $\mathcal{A}$ with values in $M$ is defined  to be a $n$-cochain $f \in C^{n}(\mathcal{A},M)$ such that it is compatible with $\alpha$ and $\beta$ in the sense that $f\circ \alpha=\beta \circ f$.\\
Denote by $C_{\alpha,\beta}^{n}(\mathcal{A},M)$ the set of $n$-cochains$:$
\begin{equation}\label{module3}
    C_{\alpha,\beta}^{n}(\mathcal{A},M)=\{f:\mathcal{A}\longrightarrow M ~:~ f\circ \alpha=\beta \circ f \}.
\end{equation}
We extend this definition to the case of integers $n < 0$  and set
$$C_{\alpha,\beta}^{n}(\mathcal{A},M)=\{0\}\ \text{ if } n < -1\quad \text{ and }\quad
C^{0}(\mathcal{A},M)=M.$$
A homogeneous element $f \in C_{\alpha,\beta}^{n}(\mathcal{A},M)$ is called $n$-cochain.\\
Next, for a given  integer $r$, we define the coboundary operator $\delta_{r}^{n}$.
\begin{df} We call, for $n\geq 1$ and for a any integer $m$, a $n$-coboundary operator of the color Hom-Lie algebra $(\mathcal{A},[.,.],\varepsilon,\alpha)$ the linear map $\delta_{r}^{n}:C_{\alpha,\beta}^{n}(\mathcal{A},M)\longrightarrow C_{\alpha,\beta}^{n+1}(\mathcal{A},M)$ defined by
\begin{eqnarray}\label{ECHLC}
&& \delta_{r}^{n}(f)(x_{0},....,x_{n}) = \\
&&\quad  \nonumber \sum \limits_{0\leq s < t \leq n}(-1)^{t}\varepsilon(x_{s+1}+...+x_{t-1},x_{t})
 f(\alpha(x_{0}),...,\alpha(x_{s-1}),[x_{s},x_{t}],\alpha(x_{s+1}),...,\widehat{x_{t}},...,\alpha(x_{n}))\\
&& \nonumber \quad + \sum \limits_{s=0}^{n}(-1)^{s}\varepsilon(\gamma+x_{0}+...+x_{s-1},x_{s})[\alpha^{r+n-1}(x_{s}),f(x_{0},...,\widehat{x_{s}},..,x_{n})]_{M},
\end{eqnarray}
where $f \in C_{\alpha,\beta}^{n}(\mathcal{A},M)$, $\gamma$ is the degree of $f$, $(x_{0}, ...., x_{n}) \in \mathcal{H}(\mathcal{A})^{\otimes n} $ and  $\widehat{x}$ indicates that the element $x$ is omitted.\\

In the sequel we assume that the color Hom-Lie algebra $(\mathcal{A},[.,.],\varepsilon,\alpha)$ is multiplicative.
\end{df}
For $n=1$, we have
$$\begin{array}{cccc}
  \delta_{r}^{1}: & C^{1}(\mathcal{A},M) & \longrightarrow & C^{2}(\mathcal{A},M)  \\
   & f & \longmapsto & \delta_{r}^{1}(f)
\end{array}$$
such that for two homogeneous elements $x,y$ in $\mathcal{A}$
\begin{equation}\label{cobord1}
    \delta_{r}^{1}(f)(x,y) =\varepsilon(\gamma,x)\rho(x)(f(y))-\varepsilon(\gamma+x,y)\rho(y)(f(x))-f([x,y])
\end{equation}
and for $n=2$, we have
$$\begin{array}{cccc}
  \delta_{r}^{2}: & C^{2}(\mathcal{A},M) & \longrightarrow & C^{3}(\mathcal{A},M)  \\
   & f & \longmapsto & \delta_{r}^{2}(f)
\end{array}$$
such that, for three homogeneous elements $x,y,z$ in $\mathcal{A}$, we have
\begin{eqnarray}\label{2-cocycle}
\delta_{r}^2(f)(x,y,z)&=&\varepsilon(\gamma,x)\rho(\alpha(x))(f(y,z))-\varepsilon(\gamma+x,y)\rho(\alpha(y))(f(x,z))\nonumber\\
&+&\varepsilon(\gamma+x+y,z)\rho(\alpha(z))(f(x,y))-f([x,y],\alpha(z))\nonumber\\
&+&\varepsilon(y,z)f([x,z],\alpha(y))+f(\alpha(x),[y,z]).
\end{eqnarray}

\begin{lem} With the above notations, for any $f\in C^{n}_{\alpha,\beta}(\mathcal{A},M)$, we have
$$\delta_{r}^{n}(f)\circ \alpha=\beta \circ \delta_{r}^{n}(f),~~\forall~~ n \geq 2.$$
Thus, we obtain a well defined map $\delta_{r}^{n}:C_{\alpha,\beta}^{n}(\mathcal{A},M)\longrightarrow C_{\alpha,\beta}^{n+1}(\mathcal{A},M).$
\end{lem}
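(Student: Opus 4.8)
The plan is to evaluate the right-hand side of the defining formula $(\ref{ECHLC})$ on the tuple $(\alpha(x_{0}),\dots,\alpha(x_{n}))$ and to show that each summand equals $\beta$ applied to the corresponding summand of $\delta_{r}^{n}(f)(x_{0},\dots,x_{n})$; additivity of $\beta$ then yields the claim. First I would record the three facts used repeatedly: multiplicativity $\alpha[x,y]=[\alpha(x),\alpha(y)]$, the cochain condition $f\circ\alpha=\beta\circ f$ from $(\ref{module3})$, and the module compatibility $(\ref{module1})$, namely $[\alpha(x),\beta(m)]_{M}=\beta([x,m]_{M})$. Since $\alpha$ is even, the colors of $\alpha(x_{i})$ and $x_{i}$ coincide, so every bi-character coefficient $\varepsilon(\cdots)$ occurring in $(\ref{ECHLC})$ is unchanged under the substitution $x_{i}\mapsto\alpha(x_{i})$; this is the observation that lets the argument proceed factor by factor.

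For the first (bracket) sum, the substitution turns the inner bracket into $[\alpha(x_{s}),\alpha(x_{t})]$, which equals $\alpha([x_{s},x_{t}])$ by multiplicativity, while every remaining entry becomes $\alpha^{2}(x_{i})=\alpha(\alpha(x_{i}))$. Hence the entire argument tuple of $f$ lies in the image of $\alpha$, and $f\circ\alpha=\beta\circ f$ lets me pull a $\beta$ outside, producing exactly $\beta$ of the $(s,t)$ summand of $\delta_{r}^{n}(f)(x_{0},\dots,x_{n})$. For the second (action) sum, I would use $\alpha^{r+n-1}(\alpha(x_{s}))=\alpha(\alpha^{r+n-1}(x_{s}))$ together with $f(\alpha(x_{0}),\dots,\widehat{\alpha(x_{s})},\dots,\alpha(x_{n}))=\beta(f(x_{0},\dots,\widehat{x_{s}},\dots,x_{n}))$; the summand then reads $[\alpha(\alpha^{r+n-1}(x_{s})),\beta(m)]_{M}$ with $m=f(x_{0},\dots,\widehat{x_{s}},\dots,x_{n})$, and $(\ref{module1})$ rewrites this as $\beta([\alpha^{r+n-1}(x_{s}),m]_{M})$, i.e. $\beta$ of the $s$-th summand of the action part.

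Adding the two sums and using the linearity of $\beta$ gives $\delta_{r}^{n}(f)(\alpha(x_{0}),\dots,\alpha(x_{n}))=\beta(\delta_{r}^{n}(f)(x_{0},\dots,x_{n}))$, which is the asserted intertwining and shows that $\delta_{r}^{n}(f)$ is again compatible with $\alpha$ and $\beta$, hence lies in $C_{\alpha,\beta}^{n+1}(\mathcal{A},M)$ (its $\varepsilon$-alternating property being inherited from that of $f$ by a routine check on adjacent transpositions). The step I expect to require the most care is the bookkeeping of the bi-character factors: one must verify that the degree of the new bracket entry $\alpha([x_{s},x_{t}])$ equals that of $[x_{s},x_{t}]$ and that the omitted slot in the action sum is matched correctly, so that the coefficients $\varepsilon(\cdots)$ and the signs $(-1)^{t}$ (resp. $(-1)^{s}$) are literally identical before and after $\beta$ is extracted. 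Once this is confirmed the identity falls out termwise, with no residual terms to cancel, so in contrast to the Hom-Jacobi computations there is no cancellation to orchestrate here.
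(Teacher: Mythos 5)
Your proposal is correct and follows essentially the same route as the paper's proof: expand $\delta_{r}^{n}(f)(\alpha(x_{0}),\dots,\alpha(x_{n}))$ termwise, use multiplicativity to write $[\alpha(x_{s}),\alpha(x_{t})]=\alpha([x_{s},x_{t}])$ so the first sum becomes $f\circ\alpha$ of the original arguments and hence $\beta\circ f$ of them, and use $f\circ\alpha=\beta\circ f$ together with $(\ref{module1})$ to pull $\beta$ out of each bracket in the second sum. Your explicit remark that the evenness of $\alpha$ leaves all bi-character coefficients unchanged is a point the paper uses silently, but it does not change the argument.
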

\begin{proof} Let $f\in C^{n}_{\alpha,\beta}(\mathcal{A},M)$ and $(x_{0},....,x_{n})\in \mathcal{H}(\mathcal{A}^{\otimes n+1})$, we have
\begin{eqnarray*}
&&\delta_{r}^{n}(f)\circ \alpha(x_{0},....,x_{n})\\
&&=\small \small \delta_{r}^{n}(f)(\alpha(x_{0}),....,\alpha(x_{n}))\\
&&=\small \sum \limits_{0\leq s < t \leq n}(-1)^{t}\varepsilon(x_{0}+...+x_{t-1},x_{t})f(\alpha^{2}(x_{0}),...,
\alpha^{2}(x_{s-1}), [\alpha(x_{s}),\alpha(x_{t})],\alpha^{2}(x_{s+1}),...,\widehat{x_{t}},...,\alpha^{2}(x_{n}))\\
&&+\sum \limits_{s=0}^{n}(-1)^{s}\varepsilon(\gamma+x_{0}+...+x_{s-1},x_{s})
[\alpha^{n+r}(x_{s}),f(\alpha(x_{0}),...,\widehat{x_{s}},..,\alpha(x_{n}))]_{M}\\
&&=\sum \limits_{0\leq s < t \leq n}(-1)^{t}\varepsilon(x_{0}+...+x_{t-1},x_{t})f\circ \alpha(\alpha(x_{0}),...,\alpha(x_{s-1}), [x_{s},x_{t}], \alpha(x_{s+1}),...,\widehat{x_{t}},...,\alpha(x_{n}))\\
&&+\sum \limits_{s=0}^{n}(-1)^{s}\varepsilon(\gamma+x_{0}+...+x_{s-1},x_{s}) [\alpha^{n+r}(x_{s}), \beta \circ f(x_{0},...,\widehat{x_{s}},..,x_{n})]_{M}\\
&&=\sum \limits_{0\leq s < t \leq n}(-1)^{t}\varepsilon(x_{0}+...+x_{t-1},x_{t})\beta \circ f(\alpha(x_{0}),...,\alpha(x_{s-1}),[x_{s},x_{t}], \alpha(x_{s+1}),...,\widehat{x_{t}},...,\alpha(x_{n}))\\
&&+\sum \limits_{s=0}^{n}(-1)^{s}\varepsilon(\gamma+x_{0}+...+x_{s-1},x_{s})\beta([\alpha^{n+r-1}(x_{s}),
f(x_{0},...,\widehat{x_{s}},..,x_{n})]_{M})\\
&&=\beta \circ \delta_{r}^{n}(f)(x_{0},....,x_{n}).
\end{eqnarray*}
Then $\delta_{r}^{n}(f)\circ \alpha=\beta \circ \delta_{r}^{n}(f)$ which completes the proof.
\end{proof}
\begin{thm}\label{ker} Let $(\mathcal{A},[.,.],\varepsilon,\alpha)$ be a color Hom-Lie algebra and $(M,\beta)$ be an $\mathcal{A}$-module.
Then the pair $(\bigoplus_{n\geq 0}C_{\alpha,\beta}^{n}, \delta_{r}^{n})$ is a cohomology complex. That is  the maps $\delta_{r}^{n}$ satisfy  $\delta_{r}^{n}\circ \delta_{r}^{n-1}=0,~~\forall~~ n \geq 2,\forall~~ r \geq 1$
\end{thm}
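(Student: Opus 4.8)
The plan is to verify $\delta_r^n\circ\delta_r^{n-1}=0$ by a direct expansion of $\delta_r^n\big(\delta_r^{n-1}(f)\big)(x_0,\dots,x_n)$ and then to organize the resulting terms according to how many \emph{outer} module brackets $[.,.]_M$ they carry. Writing $g=\delta_r^{n-1}(f)$, the formula (\ref{ECHLC}) splits $\delta_r^n(g)$ into an \emph{insertion part}, in which a bracket $[x_s,x_t]$ is fed back into $g$, and a \emph{module part} $[\alpha^{r+n-1}(x_s),g(\dots)]_M$. Substituting the analogous two-part decomposition of $g$ itself produces four families of summands. Throughout, I would use the multiplicativity hypothesis $\alpha[x,y]=[\alpha(x),\alpha(y)]$ together with the cochain condition $f\circ\alpha=\beta\circ f$ to line up the powers of $\alpha$ on all arguments, and (\ref{module1}) to move $\beta$ through the module action where needed.

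The terms carrying no outer module bracket come from applying the insertion part twice; they are $f$ evaluated on arguments containing two brackets. When the two brackets are disjoint, say $[x_a,x_b]$ and $[x_c,x_d]$ with all four indices distinct, each configuration arises exactly twice, with the order of formation reversed and with opposite signs, so the two occurrences cancel. When the two brackets are nested, they yield arguments of the form $[\alpha(x_a),[x_b,x_c]]$ after multiplicativity; grouping these over each cyclic triple gives, inside $f$, exactly the combination $\circlearrowleft_{a,b,c}\varepsilon(c,a)[\alpha(x_a),[x_b,x_c]]$, which vanishes by the $\varepsilon$-Hom-Jacobi identity (\ref{HJCI}). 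This disposes of the pure-insertion family.

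The terms carrying exactly one outer module bracket arise in two ways: the outer operator inserts a bracket and the inner one acts by the module, or vice versa. Each produces a summand $[\alpha^{\ast}(x_s),f(\dots,[x_a,x_b],\dots)]_M$, and I would check that the two contributions to each such summand carry opposite signs and cancel in pairs. Finally, the terms with two outer module brackets, $[\alpha^{\ast}(x_s),[\alpha^{\ast}(x_t),f(\dots)]_M]_M$, must be combined with the insertion-then-module terms in which the \emph{freshly formed} bracket $[x_s,x_t]$ is the argument picked out by the module action, i.e.\ terms of the shape $[[\alpha^{\ast}(x_s),\alpha^{\ast}(x_t)],f(\dots)]_M$. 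Antisymmetrizing the double-module terms in $s,t$ and invoking the module compatibility (\ref{module2})---which is precisely the representation-level $\varepsilon$-Hom-Jacobi identity---shows that these two groups annihilate one another.

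The genuine difficulty is not the combinatorial pairing but the bookkeeping of the bicharacter factors and signs in the graded setting. Each time two homogeneous arguments are transposed, or an index $x_s$ is commuted past the block $x_0+\dots+x_{s-1}$ and past the degree $\gamma$ of $f$, a factor of $\varepsilon$ is incurred, and the axioms $\varepsilon(a,b)\varepsilon(b,a)=1$ and the bi-additivity of $\varepsilon$ must be used to reconcile the $\varepsilon$-weights of the two terms in each cancelling pair. Verifying that these weights agree exactly---so that the disjoint-bracket cancellation, the pairing of the mixed terms, and the applications of (\ref{HJCI}) and (\ref{module2}) all go through with the correct coefficients---is where the bulk of the work lies; the structural skeleton of the argument is identical to the classical Chevalley--Eilenberg computation.
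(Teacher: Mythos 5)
Your decomposition is exactly the one the paper uses: the pure-insertion terms with nested brackets are killed by the $\varepsilon$-Hom-Jacobi condition, the disjoint double-insertion terms and the mixed insertion/module terms cancel in pairs with opposite signs, and the double-module terms combine with the ``bracket-acted-on-by-the-module'' terms to vanish via the module compatibility (\ref{module2}), just as in the paper's grouping of its labelled summands. The approach is essentially identical (and, like the paper, you defer the explicit verification of the $\varepsilon$-weights in each cancelling pair), so this is a faithful match.
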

 \begin{proof} For any $f \in C^{n-1}(\mathcal{A},M)$, we have
 \begin{eqnarray}\label{1}
  && \delta_{r}^{n}\circ \delta_{r}^{n-1}(f)(x_{0},....,x_{n})=\nonumber\\
&& \quad\quad \sum_{s < t}(-1)^{t}\varepsilon(x_{0}+...+x_{t-1},x_{t})
 \delta_{r}^{n+1}(f)(\alpha(x_{0}),...,\alpha(x_{s-1}),[x_{s},x_{t}],\alpha(x_{s+1}),...,
\widehat{x_{t}},...,\alpha(x_{n}))
\end{eqnarray}
\begin{eqnarray}\label{2}
     &+&\sum_{s=0}^{n}(-1)^{s}\varepsilon(f+x_{0}+...+x_{s-1},x_{s})[\alpha^{r+n-1}(x_{s}),
 \delta_{r}^{n-1}(f)(x_{0},...,\widehat{x_{s}},..,x_{n})]_{M}.
\end{eqnarray}
From $(\ref{1})$ we have
\begin{eqnarray*}
&&\delta_{r}^{n-1}(f)(\alpha(x_{0}),...,\alpha(x_{s-1}),[x_{s},x_{t}],\alpha(x_{s+1}),...,
\widehat{x_{t}},...,\alpha(x_{n}))\\
&=&\sum \limits_{s^{'} < t^{'}< s }(-1)^{t^{'}}\varepsilon(x_{s^{'}+1}+...+x_{t^{'}-1},x_{t^{'}})f(\alpha^{2}(x_{0})
,...,\alpha^{2}(x_{s^{'}-1}),[\alpha(x_{s^{'}}),\alpha(x_{t^{'}})],\alpha^{2}(x_{s^{'}+1}),
\end{eqnarray*}
 \begin{equation}\label{01}
...,\widehat{x_{t^{'}}},...,\alpha^{2}(x_{s-1}),\alpha([x_{s},x_{t}]),\alpha^{2}(x_{s+1}),...,\widehat{x_{t}},...,\alpha^{2}(x_{n}))
\end{equation}
\begin{eqnarray*}
&+&\sum \limits_{s^{'}< s }(-1)^{s}\varepsilon(x_{s^{'}+1}+...+x_{s-1},x_{s})
 \end{eqnarray*}
 \begin{eqnarray}\label{02}
&& f(\alpha^{2}(x_{0}),...,\alpha^{2}(x_{s^{'}-1}),[\alpha(x_{s^{'}-1}),[x_{s},x_{t}]],\alpha^{2}(x_{s^{'}+1}),...,\widehat{x_{s,t}},...,\alpha^{2}(x_{n}))
\end{eqnarray}
\begin{eqnarray*}
&+&\sum \limits_{s^{'} < s< t^{'}< t }(-1)^{t^{'}}\varepsilon(x_{s^{'}+1}+...+[x_{s},x_{t}]+...+x_{t^{'}-1},x_{t^{'}})
 \end{eqnarray*}
 \begin{equation}\label{03}
f(\alpha^{2}(x_{0}),...,\alpha^{2}(x_{s^{'}-1}),[\alpha(x_{s^{'}}),\alpha(x_{t^{'}})],\alpha^{2}(x_{s^{'}+1}),\alpha([x_{s},x_{t}])
,...,\widehat{x_{t^{'}}},...,\alpha^{2}(x_{n}))
\end{equation}
\begin{eqnarray*}
&+&\sum \limits_{s^{'}< s< t< t^{'}}
(-1)^{t^{'}}\varepsilon(x_{s^{'}+1}+...x_{s-1}+[x_{s},x_{t}]+x_{s+1}+...+\widehat{x_{t}}+...+x_{t^{'}-1},x_{t^{'}})\end{eqnarray*}
\begin{equation}\label{04}
 f(\alpha^{2}(x_{0}),...,,\alpha^{2}(x_{s^{'}-1}),[\alpha(x_{s^{'}}),\alpha(x_{t^{'}})],\alpha^{2}(x_{s^{'}+1}),
,\alpha([x_{s},x_{t}]),...,\widehat{x_{t}},...,\widehat{x_{t^{'}}},...,\alpha^{2}(x_{n}))
\end{equation}
\begin{eqnarray}\label{05}
&+&\sum \limits_{ s< t^{'}< t}
(-1)^{t^{'}}\varepsilon(x_{s+1}+...+x_{t^{'}-1},x_{t^{'}}) f(\alpha^{2}(x_{0}),...,[[x_{s},x_{t}],\alpha(x_{t^{'}})],
\alpha^{2}(x_{s+1}),...,\widehat{x_{t,t^{'}}},...,\alpha^{2}(x_{n}))
\end{eqnarray}
\begin{eqnarray*}
&+&\sum \limits_{ s<t <t^{'}}(-1)^{t^{'}-1}\varepsilon(x_{s+1}+...+\widehat{x_{t}}+...+x_{t^{'}-1},x_{t^{'}})\end{eqnarray*}
\begin{eqnarray}\label{06}
f(\alpha^{2}(x_{0}),...,\alpha^{2}(x_{s-1}),[[x_{s},x_{t}],\alpha(x_{t^{'}})],\alpha^{2}(x_{s+1}),...,\widehat{x_{t,t^{'}}},...,\alpha^{2}(x_{n}))
\end{eqnarray}
\begin{eqnarray*}
&+&\sum \limits_{ s<s^{'}< t^{'}< t}(-1)^{t^{'}}\varepsilon(x_{s^{'}+1}+...+x_{t^{'}-1},x_{t^{'}})
f(\alpha^{2}(x_{0}),...\alpha^{2}(x_{s-1}),\alpha([x_{s},x_{t}]),\end{eqnarray*}
\begin{eqnarray}\label{07}
[\alpha(x_{s^{'}}),\alpha(x_{t^{'}})],...,\widehat{x_{t^{'}}},...,\widehat{x_{t}},...,\alpha^{2}(x_{n}))
\end{eqnarray}
\begin{eqnarray*}
&+&\sum \limits_{ s<s^{'}< t<t^{'}}(-1)^{t^{'}}\varepsilon(x_{s^{'}+1}+...+\widehat{x_{t}}+...+x_{t^{'}-1},x_{t^{'}})
\end{eqnarray*}
\begin{eqnarray}\label{08}
f(\alpha^{2}(x_{0}),...\alpha^{2}(x_{s-1}),\alpha([x_{s},x_{t}]),\alpha^{2}(x_{s+1}),..., [\alpha(x_{s^{'}}),\alpha(x_{t^{'}})],...,\widehat{x_{t}},...\widehat{x_{t^{'}}},...,\alpha^{2}(x_{n}))
\end{eqnarray}
\begin{eqnarray*}
&+&\sum_{ t<s^{'}<t^{'}}(-1)^{t^{'}}\varepsilon(x_{s^{'}+1}+...+\widehat{x_{t,t^{'}}}+...+x_{t^{'}-1},x_{t^{'}})
\end{eqnarray*}
\begin{eqnarray}\label{O9}
f(\alpha^{2}(x_{0}),...\alpha^{2}(x_{s-1}),\alpha([x_{s},x_{t}]),\alpha^{2}(x_{s+1}),...,\widehat{x_{t}},..., [\alpha(x_{s^{'}}),\alpha(x_{t^{'}})],...,\widehat{x_{t^{'}}},...,\alpha^{2}(x_{n}))
\end{eqnarray}
\begin{eqnarray}\label{010}
&+&\sum \limits_{ 0< s^{'}< s}(-1)^{s^{'}}\varepsilon(\gamma+x_{0}+...+x_{s^{'}-1},x_{s^{'}}) [\alpha^{r+n-1}(x_{s^{'}}),f(\alpha(x_{0}),...\widehat{x_{s^{'}}}
,[x_{s},x_{t}],...,\widehat{x_{t^{'}}},...,\alpha(x_{n}))]_{M}
\end{eqnarray}
\begin{eqnarray*}
&+&(-1)^{s}\varepsilon(\gamma+x_{0}+...+x_{s-1},[x_{s},x_{t}])
\end{eqnarray*}
\begin{eqnarray}\label{011}
[\alpha^{r+n-2}([x_{s},x_{t}]),
f(\alpha(x_{0}),...\widehat{[x_{s},x_{t}]},\alpha(x_{s+1})...,\widehat{x_{t}},...,\alpha(x_{n}))]_{M}
\end{eqnarray}
\begin{eqnarray*}
& +&\sum \limits_{ s< s^{'}< t}(-1)^{s^{'}}\varepsilon(\gamma+x_{0}+...+[x_{s},x_{t}]+...+x_{s^{'}-1},x_{s^{'}})\end{eqnarray*}
\begin{eqnarray}\label{012}
&&[\alpha^{r+n-2}(x_{s^{'}}),f(\alpha(x_{0}),...,[x_{s},x_{t}],...,\widehat{x_{s^{'},t}},...,\alpha(x_{n}))]_{M}
\end{eqnarray}
\begin{eqnarray*}
&+&\sum \limits_{ t< s^{'}}(-1)^{s^{'}}\varepsilon(\gamma+x_{0}+..[x_{s},x_{t}]+...+\widehat{x_{t}}+...+x_{s^{'}-1},x_{s^{'}})\end{eqnarray*}
\begin{eqnarray}\label{013}
&&[\alpha^{r+n-2}(x_{s^{'}}),f(\alpha(x_{0}),...,[x_{s},x_{t}],...,\widehat{x_{t,s^{'}}},...,\alpha(x_{n}))]_{M}.
\end{eqnarray}
The identity $(\ref{2})$ implies that
\begin{eqnarray*}[\alpha^{r+n-1}(x_{s}), \delta_{r}^{n-1}(f)(x_{0},...,\widehat{x_{s}},..,x_{n})]_{M}
&=&[\alpha^{r+n-1}(x_{s}),\sum \limits_{s^{'} < t^{'}< s }(-1)^{t^{'}}\varepsilon(x_{s^{'}+1}+...+x_{t^{'}-1},x_{t^{'}})
\end{eqnarray*}
\begin{eqnarray*}
&&f(\alpha(x_{0}),...,\alpha(x_{s^{'}-1}),[x_{s^{'}},x_{t^{'}}],\alpha(x_{s^{'}+1}),...,\widehat{x_{s^{'},t^{'},t}},\alpha(x_{s+1}),...,\alpha(x_{n}))]_{M}
\end{eqnarray*}
\begin{eqnarray*}
&+&[\alpha^{r+n-1}(x_{s}),\sum_{s^{'} < s<t }(-1)^{t^{'}-1}\varepsilon(x_{s^{'}+1}+...+\widehat{x_{s}}+...+x_{t^{'}-1},x_{t^{'}})\end{eqnarray*}
\begin{equation}\label{015}
f(\alpha(x_{0}),...,\alpha(x_{s^{'}-1}),[x_{s^{'}},x_{t^{'}}],\alpha(x_{s^{'}+1}),...,\widehat{x_{t,s^{'}}},...,\alpha(x_{n}))]_{M}
\end{equation}
\begin{eqnarray*}&+&[\alpha^{r+n-1}(x_{s}),\sum \limits_{s < s^{'}<t^{'} }(-1)^{t^{'}}\varepsilon(x_{s^{'}+1}+...+x_{t^{'}-1},x_{t^{'}})\end{eqnarray*}
\begin{equation}\label{016}
f(\alpha(x_{0}),...,\widehat{x_{s}},...,\alpha(x_{s^{'}-1}),[x_{s^{'}},x_{t^{'}}],\alpha(x_{s^{'}+1}),...,\widehat{x_{t^{'}}},...,\alpha(x_{n}))]_{M}
\end{equation}
\begin{equation}\label{017}
+[\alpha^{r+n-1}(x_{s}),\sum_{s^{'}=0}^{s-1}(-1)^{s}\varepsilon(\gamma+x_{0}+...+x_{s^{'}-1},x_{s^{'}})[\alpha^{r+n-2}(x_{s^{'}}),
f(x_{0},...,\widehat{x_{s^{'},s}},..,x_{n})]_{M}]_{M}
\end{equation}
\begin{eqnarray}
&+&[\alpha^{r+n-1}(x_{s}),\sum_{s^{'}=s+1}^{n}(-1)^{s^{'}-1}\varepsilon(\gamma+x_{0}+...+\widehat{x_{s}}....+x_{s^{'}-1},x_{s^{'}})
[\alpha^{n+r-2}(x_{s^{'}}),f(x_{0},...,\widehat{x_{s^{'},s}},..,x_{n})]_{M}]_{M}.
\nonumber \\ \label{018} &&
\end{eqnarray}
By the $\varepsilon$-Hom-Jacobi condition, we obtain
$$\sum \limits_{ s<t }(-1)^{t}\varepsilon(x_{s+1}+...+x_{t-1},x_{t})((\ref{02})+(\ref{05})+(\ref{06}))=0.$$
Using $(\ref{module2})$ and $(\ref{module3})$, we  obtain
\begin{eqnarray*}
  (\ref{011}) &=&[\alpha^{r+n-2}([x_{s},x_{t}]),f(\alpha(x_{0}),...\alpha(x_{s-1}),\widehat{\alpha([x_{s},x_{t}])},\alpha(x_{s+1})....,\widehat{x_{t}},...,\alpha(x_{n}))]_{M}  \\
   &=& [\alpha^{n+r-1}(x_{s}),[\alpha^{r+n-2}(x_{t}),f(x_{0},...,\widehat{x_{s,t}},..,x_{n})]_{M}]_{M}\\
   &-&[\alpha^{r+n-1}(x_{t}),[\alpha^{r+n-2}(x_{s}),f(x_{0},...,\widehat{x_{s,t}},..,x_{n})]_{M}]_{M}.
\end{eqnarray*}
Thus
\begin{eqnarray*}
&&\ \ \ \sum \limits_{ s < t }(-1)^{t}\varepsilon(x_{s+1}+...+x_{t-1},x_{t})(\ref{011})
+\sum \limits_{s=0}^{n}(-1)^{s}\varepsilon(\gamma+x_{0}+...+x_{s-1},x_{s})(\ref{017})\\
&&\ \ \ +\sum \limits_{s=0}^{n}(-1)^{s}\varepsilon(\gamma+x_{0}+...+x_{s-1},x_{s})(\ref{018})=0.
\end{eqnarray*}
By a simple calculation, we get
\begin{eqnarray*}
\sum \limits_{ s < t }(-1)^{t}\varepsilon(x_{s+1}+...+x_{t-1},x_{t})(\ref{010})
 +\sum \limits_{s=0}^{n}(-1)^{s}\varepsilon(\gamma+x_{0}+...+x_{s-1},x_{s})(\ref{016})&=&0,\\
 \sum \limits_{ s < t }(-1)^{t}\varepsilon(x_{s+1}+...+x_{t-1},x_{t})(\ref{013})
+\sum \limits_{s=0}^{n}(-1)^{s}\varepsilon(\gamma+x_{0}+...+x_{s-1},x_{s})(\ref{015})&=&0,
\end{eqnarray*}
and
\begin{eqnarray*}&&\ \ \ \sum \limits_{ s < t }(-1)^{t}\varepsilon(x_{s+1}+...+x_{t-1},x_{t})((\ref{03})+(\ref{08}))\\
&&\ \ \ =\sum \limits_{ s < t }(-1)^{t}\varepsilon(x_{s+1}+...+x_{t-1},x_{t})\Big(\sum \limits_{s^{'} < s< t^{'}< t} (-1)^{t^{'}} \varepsilon(x_{s^{'}+1}+...+[x_{s},x_{t}]+...+x_{t^{'}-1},x_{t^{'}})\\
&&\ \ \ \quad f(\alpha^{2}(x_{0}),..., \alpha^{2}(x_{s^{'}-1}),[\alpha(x_{s^{'}}),\alpha(x_{t^{'}})] ,\alpha^{2}(x_{s^{'}+1})
,\alpha([x_{s},x_{t}]),..., \widehat{x_{t^{'}}},...,\alpha^{2}(x_{n}))\Big)\\
&&\ \ \ +\sum \limits_{ s < t }(-1)^{t}\varepsilon(x_{s+1}+...+x_{t-1},x_{t})\Big( \sum_{ s<s^{'}< t<t^{'}}
(-1)^{t^{'}}\varepsilon(x_{s^{'}+1}+...+\widehat{x_{t}}+...+x_{t^{'}-1},x_{t^{'}})\\
&&\ \ \ \quad f(\alpha^{2}(x_{0}),...\alpha^{2}(x_{s-1}),\alpha([x_{s},x_{t}]), \alpha^{2}(x_{s+1}),...,\widehat{x_{t,t^{'}}},...[\alpha(x_{s^{'}}),\alpha(x_{t^{'}})],...,\alpha^{2}(x_{n}))\Big)\\
&&\ \ \ =0.\end{eqnarray*}
Similarly, we have
$$\sum \limits_{ s < t }(-1)^{t}\varepsilon(x_{s+1}+...+x_{t-1},x_{t})((\ref{01})+(\ref{O9}))=0$$
and
 $$\sum \limits_{ s < t }(-1)^{t}\varepsilon(x_{s+1}+...+x_{t-1},x_{t})((\ref{04})+(\ref{07}))=0.$$
Therefore  $\delta_{r}^{n}\circ \delta_{r}^{n-1}=0$. Which completes the proof.
\end{proof}

Let $Z_{r}^{n}(\mathcal{A},M)$ (resp. $B_{r}^{n}(\mathcal{A},M)$) denote the kernel of $\delta_{r}^{n}$ (resp. the image of $\delta_{r}^{n-1}$). The spaces $Z_{r}^{n}(\mathcal{A},M)$ and $B_{r}^{n}(\mathcal{A},M)$ are graded submodules of $C_{\alpha,\beta}^{n}(\mathcal{A},M)$ and according to Proposition \ref{ker}, we have
\begin{equation}\label{BZ}
    B_{r}^{n}(\mathcal{A},M)\subseteq Z_{r}^{n}(\mathcal{A},M).
\end{equation}
The elements of $Z_{r}^{n}(\mathcal{A},M)$ are called $n$-cocycles, and  the elements of $B_{r}^{n}(\mathcal{A},M)$ are called the $n$-coboundaries. Thus, we define a so-called cohomology groups
$$H_{r}^{n}(\mathcal{A},M)=\frac{Z_{r}^{n}(\mathcal{A},M)}{B_{r}^{n}(\mathcal{A},M)}.$$
We denote by $H_{r}^{n}(\mathcal{A},M)=\bigoplus_{d \in \Gamma}(H_{r}^{n}(\mathcal{A},M))_{d}$ the space of all $r$-cohomology group of degree $d$ of the color Hom-Lie algebra  $\mathcal{A}$ with values in $M$.\\
Two elements of $Z_{r}^{n}(\mathcal{A},M)$ are said to be cohomologeous if their residue class modulo $B_{r}^{n}(\mathcal{A},M)$ coincide, that is if their difference lies in $B_{r}^{n}(\mathcal{A},M)$.

\subsection{Adjoint representations of color Hom-Lie algebras}
In this section, we generalize to color Hom-Lie algebras some results from \cite{sadaoui2011cohomology} and \cite{sheng2010representations}. Let $(\mathcal{A},[.,.],\varepsilon,\alpha)$ be a multiplicative color Hom-Lie algebra. We consider $\mathcal{A}$ represents on itself via bracket with respect to the morphism $\alpha$.
\begin{df} Let $(\mathcal{A},[.,.],\varepsilon,\alpha)$ be a color Hom-Lie algebra. A representation of $\mathcal{A}$ is a triplet $(M,\rho,\beta)$, where
$M$ is a $\Gamma$-graded vector space, $\beta \in End(M)_{0}$ and $\rho:\mathcal{A} \longrightarrow End(M) $ is an even linear map satisfying
\begin{equation}\label{reprecolor}
    \rho([x,y])\circ \beta=\rho(\alpha(x))\circ\rho(y)-\varepsilon(x,y)\rho(\alpha(y))\circ\rho(x),~~\forall~x,y \in \mathcal{H}(\mathcal{A}).
\end{equation}
\end{df}
Now, we discuss the adjoint representations of a color Hom-Lie algebra.
\begin{prop} Let $(\mathcal{A},[.,.],\varepsilon,\alpha)$ be a color Hom-Lie algebra and $ad:\mathcal{A}\longrightarrow End(\mathcal{A}) $ be an
operator defined for $x \in \mathcal{H}(\mathcal{A})$ by $ad(x)(y)=[x,y]$. Then $(\mathcal{A},ad,\alpha)$ is a representation of $\mathcal{A}$.
\end{prop}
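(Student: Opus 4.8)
The plan is to verify directly the defining identity (\ref{reprecolor}) of a representation in the special case $M=\mathcal{A}$, $\rho=ad$ and $\beta=\alpha$. The structural requirements are immediate: since $[.,.]$ is even and bilinear, $ad:\mathcal{A}\longrightarrow End(\mathcal{A})$ is an even linear map (indeed $ad(x)$ sends $\mathcal{A}_\gamma$ into $\mathcal{A}_{\overline{x}+\gamma}$, so $ad(x)\in End(\mathcal{A})_{\overline{x}}$), and $\alpha\in End(\mathcal{A})_{0}$ by hypothesis. Thus the entire content of the proposition is the single operator identity
$$ad([x,y])\circ\alpha=ad(\alpha(x))\circ ad(y)-\varepsilon(x,y)\,ad(\alpha(y))\circ ad(x),\qquad x,y\in\mathcal{H}(\mathcal{A}).$$

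Next I would evaluate both sides on an arbitrary $z\in\mathcal{H}(\mathcal{A})$. The left-hand side gives $[[x,y],\alpha(z)]$, while the right-hand side unfolds, using $ad(u)(v)=[u,v]$, to $[\alpha(x),[y,z]]-\varepsilon(x,y)[\alpha(y),[x,z]]$. Hence the proposition reduces to the pointwise identity
$$[[x,y],\alpha(z)]=[\alpha(x),[y,z]]-\varepsilon(x,y)[\alpha(y),[x,z]].$$
This is where the $\varepsilon$-Hom-Jacobi condition enters. Writing out the cyclic sum explicitly, the $\varepsilon$-Hom-Jacobi condition reads
$$\varepsilon(z,x)[\alpha(x),[y,z]]+\varepsilon(x,y)[\alpha(y),[z,x]]+\varepsilon(y,z)[\alpha(z),[x,y]]=0.$$
I would solve this for $\varepsilon(y,z)[\alpha(z),[x,y]]$, multiply through by $\varepsilon(z,y)$, and then convert $[\alpha(z),[x,y]]$ into $[[x,y],\alpha(z)]$ by $\varepsilon$-skew-symmetry. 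Here the color of $[x,y]$ is $\overline{x}+\overline{y}$ and $\alpha$ is even, so the sign factor produced is $\varepsilon(\overline{x}+\overline{y},\overline{z})=\varepsilon(x,z)\varepsilon(y,z)$. Finally I would rewrite $[z,x]=-\varepsilon(z,x)[x,z]$ and collapse every product of the form $\varepsilon(a,b)\varepsilon(b,a)$ to $1$ using the bicharacter axioms; after these cancellations the two terms reassemble exactly into $[\alpha(x),[y,z]]-\varepsilon(x,y)[\alpha(y),[x,z]]$, as required.

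The computation itself is short; the only real obstacle is the careful bookkeeping of the $\varepsilon$-factors, namely tracking the colors of the nested brackets and applying $\varepsilon(a,b)\varepsilon(b,a)=1$ together with the additivity relations $\varepsilon(a+b,c)=\varepsilon(a,c)\varepsilon(b,c)$ at each step so that all extraneous bicharacter coefficients cancel. Once the pointwise identity is established, the operator identity (\ref{reprecolor}) holds for all $z$, and together with the evenness of $ad$ this shows that $(\mathcal{A},ad,\alpha)$ is a representation of $\mathcal{A}$, completing the proof.
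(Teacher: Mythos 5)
Your proposal is correct and follows essentially the same route as the paper: both reduce the representation identity $ad([x,y])\circ\alpha=ad(\alpha(x))\circ ad(y)-\varepsilon(x,y)\,ad(\alpha(y))\circ ad(x)$ to the $\varepsilon$-Hom-Jacobi condition by evaluating on $z$ and rearranging with $\varepsilon$-skew-symmetry. The only difference is that you spell out the bicharacter bookkeeping (which checks out: the factors $\varepsilon(x,z)\varepsilon(y,z)\varepsilon(z,y)\varepsilon(z,x)$ collapse correctly), whereas the paper simply asserts the rewriting.
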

\begin{proof} Since $\mathcal{A}$ is color Hom-Lie algebra, the $\varepsilon$-Hom-Jacobi condition on  $x,y,z \in \mathcal{H}(\mathcal{A})$ is
$$\varepsilon(z,x)[\alpha(x),[y,z]]+\varepsilon(y,z)[\alpha(z),[x,y]]+\varepsilon(x,y)[\alpha(y),[z,x]]=0$$
and may be written
$$ad([x,y])(\alpha(z))=ad(\alpha(x))(ad(y)(z))-\varepsilon(x,y)ad(\alpha(y))(ad(x)(z)).$$
Then the operator $ad$ satisfies
$$ad([x,y])\circ\alpha=ad(\alpha(x))\circ ad(y)-\varepsilon(x,y)ad(\alpha(y))\circ ad(x).$$
Therefore, it determines a representation of the color Hom-Lie algebra  $\mathcal{A}$.
\end{proof}
We call the representation defined in the previous Proposition the adjoint representation of the color Hom-Lie algebra  $\mathcal{A}$.
\begin{df} The $\alpha^{s}$-adjoint representation of the color Hom-Lie algebra $(\mathcal{A},[.,.],\varepsilon,\alpha)$, which we denote by $ad_{s}$, is defined by
$$ad_{s}(a)(x)=[\alpha^{s}(a),x],~~\forall~~ a,x \in \mathcal{H}(\mathcal{A}).$$
\end{df}
\begin{lem} With the above notations, we have  $(\mathcal{A},ad_{s}(.)(.),\alpha)$ is a representation of the color Hom-Lie algebra $(\mathcal{A},[.,.],\varepsilon,\alpha)$.
$$ad_{s}(\alpha(x))\circ \alpha=\alpha\circ ad_{s}(x).$$
$$ad_{s}([x,y])\circ \alpha=ad_{s}(\alpha(x))\circ ad_{s}(y)-\varepsilon(x,y)ad_{s}(\alpha(y))\circ ad_{s}(x).$$
\end{lem}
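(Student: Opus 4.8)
The plan is to verify both displayed identities by evaluating each side on an arbitrary homogeneous element $z \in \mathcal{H}(\mathcal{A})$ and reducing everything to the defining axioms of the color Hom-Lie algebra. I will use throughout that $\mathcal{A}$ is multiplicative, so that $\alpha^{k}([u,v])=[\alpha^{k}(u),\alpha^{k}(v)]$, and that $\alpha$ is even, so that $\overline{\alpha^{k}(x)}=\overline{x}$ and consequently $\varepsilon(\alpha^{k}(x),\alpha^{l}(y))=\varepsilon(x,y)$ for all $k,l$.

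For the first identity, I would compute the left-hand side applied to $z$ as $ad_{s}(\alpha(x))(\alpha(z))=[\alpha^{s+1}(x),\alpha(z)]$ and the right-hand side as $\alpha(ad_{s}(x)(z))=\alpha([\alpha^{s}(x),z])$. Multiplicativity of $\alpha$ rewrites the latter as $[\alpha^{s+1}(x),\alpha(z)]$, so the two expressions coincide; this step is immediate.

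For the second identity, which is precisely the representation condition $(\ref{reprecolor})$ with $\rho=ad_{s}$ and $\beta=\alpha$, I would abbreviate $a=\alpha^{s}(x)$ and $b=\alpha^{s}(y)$ and evaluate on $z$. Using multiplicativity on the left and the definition of $ad_{s}$ on the right, the claim reduces to the single bracket identity
$$[[a,b],\alpha(z)]=[\alpha(a),[b,z]]-\varepsilon(a,b)[\alpha(b),[a,z]].$$
To obtain this, I would start from the $\varepsilon$-Hom-Jacobi condition applied to $a,b,z$,
$$\varepsilon(z,a)[\alpha(a),[b,z]]+\varepsilon(a,b)[\alpha(b),[z,a]]+\varepsilon(b,z)[\alpha(z),[a,b]]=0,$$
then rewrite $[\alpha(b),[z,a]]=-\varepsilon(z,a)[\alpha(b),[a,z]]$ and $[\alpha(z),[a,b]]=-\varepsilon(z,a)\varepsilon(z,b)[[a,b],\alpha(z)]$ using $\varepsilon$-skew-symmetry together with $\varepsilon(z,a+b)=\varepsilon(z,a)\varepsilon(z,b)$, simplify the coefficient of the last term via $\varepsilon(b,z)\varepsilon(z,b)=1$, and finally cancel the common nonzero factor $\varepsilon(z,a)$.

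The main obstacle is the careful bookkeeping of the bi-character factors: each application of $\varepsilon$-skew-symmetry introduces a factor that must be simplified using the multiplicativity relations $\varepsilon(a,b+c)=\varepsilon(a,b)\varepsilon(a,c)$ and $\varepsilon(a,b)\varepsilon(b,a)=1$, so that all three coefficients collapse to the single factor $\varepsilon(z,a)$. Once these factors are correctly collected the algebra is routine, and the representation condition follows directly from the Hom-Jacobi identity.
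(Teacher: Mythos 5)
Your proposal is correct and follows essentially the same route as the paper: both proofs verify the first identity pointwise via multiplicativity of $\alpha$, and both obtain the second by applying the $\varepsilon$-Hom-Jacobi condition to $\alpha^{s}(x),\alpha^{s}(y),z$ and then collapsing the bi-character factors with $\varepsilon$-skew-symmetry and the relations $\varepsilon(a,b+c)=\varepsilon(a,b)\varepsilon(a,c)$, $\varepsilon(a,b)\varepsilon(b,a)=1$. The only cosmetic difference is that you solve the Jacobi identity for $[[a,b],\alpha(z)]$ while the paper expands that term directly, which is the same computation read in the opposite direction.
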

\begin{proof} First, the result follows from
\begin{eqnarray*}
  ad_{s}(\alpha(x))(\alpha(y)) &=&[\alpha^{s+1}(x),\alpha(y)]  \\
   &=& \alpha([\alpha^{s}(x),y]) \\
   &=&  \alpha\circ ad_{s}(x)(y)
\end{eqnarray*}
and
\begin{eqnarray*}
  ad_{s}([x,y])(\alpha(z)) &=&[\alpha^{s}([x,y]),\alpha(z)]
   = [[\alpha^{s}(x),\alpha^{s}(y)],\alpha(z)] \\
   &=&\varepsilon(x+y,z)\varepsilon(z,y)\varepsilon(z,x)[\alpha^{s+1}(x),[\alpha^{s}(y),z]\\
   &+&\varepsilon(x+y,z)\varepsilon(z,y)\varepsilon(x,y)[\alpha^{s+1}(y),[z,\alpha^{s}(x)] \\
   &=&[\alpha^{s+1}(x),[\alpha^{s}(y),z]+\varepsilon(x+y,z)[\alpha^{s+1}(y),[z,\alpha^{s}(x)]  \\
   &=& [\alpha^{s+1}(x),[\alpha^{s}(y),z]-\varepsilon(x,y)[\alpha^{s+1}(y),[\alpha^{s}(x),z] \\
   &=&[\alpha^{s+1}(x),ad_{s}(y)(z)]-\varepsilon(x,y)[\alpha^{s+1}(y),ad_{s}(x)(z)]  \\
   &=& ad_{s}(\alpha(x))\circ ad_{s}(y)(z)-\varepsilon(x,y)ad_{s}(\alpha(y))\circ ad_{s}(x)(z).
\end{eqnarray*}
Then $ad_{s}([x,y])\circ \alpha=ad_{s}(\alpha(x))\circ ad_{s}(y)-\varepsilon(x,y)ad_{s}(\alpha(y))\circ ad_{s}(x)$.
\end{proof}
The set of $n$-cochains on $\mathcal{A}$ with coefficients in $\mathcal{A}$ which we denote by $C^{n}_{\alpha}(\mathcal{A},\mathcal{A})$, is given by
$$C^{n}_{\alpha}(\mathcal{A},\mathcal{A})=\{f \in C^{n}(\mathcal{A},\mathcal{A}) ~:~f\circ \alpha^{\otimes n}=\alpha\circ f\}.$$
In particular, the set of $0$-cochains is given by
$$C^{0}_{\alpha}(\mathcal{A},\mathcal{A})=\{x \in \mathcal{H}(\mathcal{A})~:~\alpha(x)=x\}.$$
\begin{prop}\label{deriv} Associated to the $\alpha^{s}$-adjoint representation $ad_{s}$, of the color Hom-Lie algebra $(\mathcal{A},[.,.],\varepsilon,\alpha)$,
$D \in C^{1}_{\alpha,ad_{s}}(\mathcal{A},\mathcal{A})$ is $1$-cocycle if and only if $D$ is an $\alpha^{s+1}$-derivation of the color Hom-Lie algebra $(\mathcal{A},[.,.],\varepsilon,\alpha)$ of degree $\gamma$. (i.e. $D\in (Der_{\alpha^{s+1}}(\mathcal{A}))_{\gamma})$.
\end{prop}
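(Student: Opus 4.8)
The plan is to reduce both conditions to one and the same bracket identity. First I would unwind what it means for $D$, homogeneous of degree $\gamma$, to lie in $C^{1}_{\alpha,ad_{s}}(\mathcal{A},\mathcal{A})$: by (\ref{module3}) applied to the $\alpha^{s}$-adjoint representation $(\mathcal{A},ad_{s}(.)(.),\alpha)$, whose twisting map is $\beta=\alpha$, this is exactly the compatibility $D\circ\alpha=\alpha\circ D$. Since being an $\alpha^{s+1}$-derivation of degree $\gamma$ also requires $D\circ\alpha=\alpha\circ D$, this hypothesis is common to both sides of the equivalence and plays no further role; all the content sits in the coboundary identity.

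Next I would specialize the coboundary operator (\ref{ECHLC}) to $n=1$ and $r=1$, using that the module action is $[a,m]_{M}=ad_{s}(a)(m)=[\alpha^{s}(a),m]$. The only pair $s<t$ is $(0,1)$, and since the intervening color sum is empty its bracket term contributes just $-D([x,y])$; the two remaining terms come from the second sum with the power $\alpha^{r+n-1}=\alpha$, so that $[\alpha(x_{0}),D(x_{1})]_{M}=[\alpha^{s+1}(x_{0}),D(x_{1})]$. Reading off the signs and the bicharacter prefactors $\varepsilon(\gamma,x_{0})$ and $\varepsilon(\gamma+x_{0},x_{1})$ I expect
\[
\delta_{1}^{1}(D)(x,y)=-D([x,y])+\varepsilon(\gamma,x)[\alpha^{s+1}(x),D(y)]-\varepsilon(\gamma+x,y)[\alpha^{s+1}(y),D(x)].
\]

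The decisive step is to bring the last term into derivation shape. Here I would apply $\varepsilon$-skew-symmetry of $[.,.]$ to $[\alpha^{s+1}(y),D(x)]$, noting that $\alpha$ is even so $\alpha^{s+1}(y)$ has color $y$ while $D(x)$ has color $\gamma+x$; this yields $[\alpha^{s+1}(y),D(x)]=-\varepsilon(y,\gamma+x)[D(x),\alpha^{s+1}(y)]$. The prefactor then becomes $\varepsilon(\gamma+x,y)\,\varepsilon(y,\gamma+x)=1$ by property (1) of the bi-character, so the middle term collapses to $+[D(x),\alpha^{s+1}(y)]$ and
\[
\delta_{1}^{1}(D)(x,y)=-D([x,y])+[D(x),\alpha^{s+1}(y)]+\varepsilon(\gamma,x)[\alpha^{s+1}(x),D(y)].
\]
Thus $\delta_{1}^{1}(D)=0$ is literally the identity $D([x,y])=[D(x),\alpha^{s+1}(y)]+\varepsilon(\gamma,x)[\alpha^{s+1}(x),D(y)]$ characterizing a degree-$\gamma$ element of $Der_{\alpha^{s+1}}(\mathcal{A})$, and reading the chain of equalities in both directions gives the stated equivalence. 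I expect the only delicate point to be this bicharacter bookkeeping—correctly assigning color $\gamma+x$ to $D(x)$ and cancelling the two $\varepsilon$-factors—while every other step is a direct substitution into (\ref{ECHLC}).
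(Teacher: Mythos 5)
Your proposal is correct and follows essentially the same route as the paper: write out $\delta(D)(x,y)=-D([x,y])+\varepsilon(\gamma,x)[\alpha^{s+1}(x),D(y)]-\varepsilon(\gamma+x,y)[\alpha^{s+1}(y),D(x)]$ and use $\varepsilon$-skew-symmetry together with $\varepsilon(a,b)\varepsilon(b,a)=1$ to recognize the vanishing of this expression as the $\alpha^{s+1}$-derivation identity of degree $\gamma$. You merely make explicit the bicharacter bookkeeping and the role of the compatibility $D\circ\alpha=\alpha\circ D$ that the paper leaves implicit.
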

\begin{proof} The conclusion follows directly from the definition of the coboundary $\delta$. $D$ is closed if and only if
$$\delta (D)(x,y)=-D([x,y])+\varepsilon(\gamma,x)[\alpha^{s+1}(x),D(y)]-\varepsilon(\gamma+x,y)[\alpha^{s+1}(y),D(x)]=0.$$
So $$D([x,y])=[D(x),\alpha^{s+1}(y)]+\varepsilon(\gamma,x)[\alpha^{s+1}(x),D(y)]$$
which implies that $D$ is an $\alpha^{s+1}$-derivation of $(\mathcal{A},[.,.],\varepsilon,\alpha)$ of degree $\gamma$.
\end{proof}
\subsubsection{The $\alpha^{-1}$-adjoint representation $ad_{-1}$}
\begin{prop} Associated to the $\alpha^{-1}$-adjoint representation $ad_{-1}$, we have
\begin{eqnarray*}
H^{0}(\mathcal{A},\mathcal{A})&=&C^{0}_{\alpha}(\mathcal{A},\mathcal{A})=\{x \in \mathcal{H}(\mathcal{A}) ~:~ \alpha(x)=x\}.\\
H^{1}(\mathcal{A},\mathcal{A})&=&Der_{\alpha^{0}}(\mathcal{A}).
\end{eqnarray*}
\end{prop}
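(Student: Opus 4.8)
The plan is to read both cohomology groups off the definitions of the complex at the two lowest degrees, using Proposition \ref{deriv} specialized to $s=-1$ to identify the $1$-cocycles, and exploiting the fact that the coboundary operators $\delta_{r}^{n}$ are declared only for $n\geq 1$ to dispose of the coboundary spaces. Throughout I work with the representation $ad_{-1}$, which presupposes that $\alpha$ is invertible so that $ad_{-1}(a)(x)=[\alpha^{-1}(a),x]$ makes sense.

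For the degree-$0$ identity I would argue as follows. By construction $C^{0}(\mathcal{A},\mathcal{A})=\mathcal{A}$ and, as already recorded in the excerpt, the $\alpha$-compatibility requirement cuts this down to $C^{0}_{\alpha}(\mathcal{A},\mathcal{A})=\{x\in\mathcal{H}(\mathcal{A}):\alpha(x)=x\}$. Since no coboundary operator is defined in degrees $\leq 0$, there is no differential leaving $C^{0}$, so every $0$-cochain is closed and $Z_{r}^{0}(\mathcal{A},\mathcal{A})=C^{0}_{\alpha}(\mathcal{A},\mathcal{A})$; for the same reason there is no differential entering $C^{0}$ (recall $C^{n}=\{0\}$ for $n<0$), whence $B_{r}^{0}(\mathcal{A},\mathcal{A})=\{0\}$. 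Therefore $H^{0}(\mathcal{A},\mathcal{A})=Z_{r}^{0}/B_{r}^{0}=C^{0}_{\alpha}(\mathcal{A},\mathcal{A})$, giving the first equality.

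For the degree-$1$ identity I would invoke Proposition \ref{deriv} directly. With $s=-1$ it asserts that a cochain $D\in C^{1}_{\alpha,ad_{-1}}(\mathcal{A},\mathcal{A})$ is a $1$-cocycle if and only if it is an $\alpha^{s+1}$-derivation; substituting $s=-1$ gives $\alpha^{s+1}=\alpha^{0}$, so the cocycle condition reduces to the ordinary color derivation identity $D([x,y])=[D(x),y]+\varepsilon(\gamma,x)[x,D(y)]$ and hence $Z_{r}^{1}(\mathcal{A},\mathcal{A})=Der_{\alpha^{0}}(\mathcal{A})$. Exactly as in degree $0$, the absence of an operator $\delta_{r}^{0}$ forces the coboundary space $B_{r}^{1}(\mathcal{A},\mathcal{A})$, defined as the image of $\delta_{r}^{0}$, to be $\{0\}$, so $H^{1}(\mathcal{A},\mathcal{A})=Z_{r}^{1}/B_{r}^{1}=Der_{\alpha^{0}}(\mathcal{A})$.

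The computation is therefore essentially bookkeeping once Proposition \ref{deriv} is in hand; the only point demanding care is tracking the powers of $\alpha$, namely checking that the derivation degree $\alpha^{s+1}$ produced there collapses to $\alpha^{0}$ at $s=-1$, and confirming that no hidden operator $\delta_{r}^{0}$ contributes coboundaries in degree $1$. Both are settled by the conventions fixed earlier in the section, so beyond this index-checking no further estimate or construction is needed.
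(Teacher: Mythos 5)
Your computation of $H^{1}$ follows the paper's route exactly: Proposition \ref{deriv} at $s=-1$ identifies the $1$-cocycles with $\alpha^{0}$-derivations, and the vanishing of $B^{1}$ then gives $H^{1}=Der_{\alpha^{0}}(\mathcal{A})$. The divergence is in how you dispose of the degree-$0$ differential. You argue that $\delta_{r}^{0}$ simply does not exist (the coboundary operator being declared only for $n\geq 1$), so that $Z^{0}=C^{0}_{\alpha}(\mathcal{A},\mathcal{A})$ and $B^{1}=\{0\}$ hold vacuously. The paper instead evaluates a degree-$0$ differential coming from the module action, $\delta(x)(y)=\varepsilon(x,y)[\alpha^{-1}(y),x]$, and asserts that this expression vanishes; it is that asserted vanishing, not the absence of the operator, which yields $Z^{0}=C^{0}_{\alpha}(\mathcal{A},\mathcal{A})$ and the absence of exact $1$-cochains. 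Your premise cannot be the operative convention of the section: in the companion computation for $ad_{0}$ in the next subsection the same degree-$0$ differential is nonzero, $d_{0}(x)(y)=[x,y]$, and it cuts $Z^{0}$ down to the centralizer and produces $B^{1}=Inn_{\alpha}(\mathcal{A})\neq\{0\}$; if ``no differential leaves $C^{0}$'' were correct, that proposition would fail. So although your bookkeeping lands on the same answer here, the steps $Z^{0}=C^{0}_{\alpha}(\mathcal{A},\mathcal{A})$ and $B^{1}=\{0\}$ really rest on the specific claim that the map $y\mapsto\varepsilon(x,y)[\alpha^{-1}(y),x]$ vanishes for the $ad_{-1}$ module structure --- a claim the paper states without justification and which your argument does not engage with at all. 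Your side remark that $ad_{-1}$ presupposes the invertibility of $\alpha$ is correct and worth keeping.
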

\begin{proof} For any $0$-cochain $x \in C^{0}_{\alpha}(\mathcal{A},\mathcal{A})$, we have
$$\delta(x)(y)=\varepsilon(x,y)[\alpha^{-1}(y),x]=0,~~\forall~~ y \in \mathcal{H}(\mathcal{A}).$$
Therefore any $0$-cochain is closed. Thus, we have
$$H^{0}(\mathcal{A},\mathcal{A})=C^{0}_{\alpha}(\mathcal{A},\mathcal{A})=\{x \in \mathcal{H}(\mathcal{A}) ~:~ \alpha(x)=x\}.$$
Since there is not exact $1$-cochain, by Proposition \ref{deriv}, we have
$$H^{1}(\mathcal{A},\mathcal{A})=Der_{\alpha^{0}}(\mathcal{A}).$$
Let $w \in C^{2}_{\alpha}(\mathcal{A},\mathcal{A})$ be an even $\varepsilon$-skew-symmetric bilinear operator commuting with $\alpha$. Considering a t-parametrized family of bilinear operations
$$[x,y]_{t}=[x,y]+ tw(x,y).$$
Since $w$ commute with $\alpha$, $\alpha$ is a morphism with respect to the bracket $[.,.]_{t}$ for every $t$. If all bracket $[.,.]_{t}$ endow that $w$ generates a deformation of the color Hom-Lie algebra $(\mathcal{A},[.,.],\varepsilon,\alpha)$. By computing the $\varepsilon$-Hom-Jacobi condition of
 $[.,.]_{t}$, this is equivalent to
 \begin{eqnarray}\label{rep1}
   \circlearrowleft_{x,y,z}\varepsilon(z,x)\Big(w(\alpha(x),[y,z])+[\alpha(x),w(y,z)]\Big)&=&0,
 \end{eqnarray}
 \begin{eqnarray}\label{rep2}
   \circlearrowleft_{x,y,z}\varepsilon(z,x) w\Big(\alpha(x),w(y,z)\Big)&=&0.
 \end{eqnarray}
 Obviously, $(\ref{rep1})$ means that $w$ is an even $2$-cocycle with respect to the $\alpha^{-1}$-adjoint representation $ad_{-1}$.\\
 Furthermore, $(\ref{rep2})$ means that $w$ must itself defines a color Hom-Lie algebra structure on $\mathcal{A}$.
 \end{proof}
\subsubsection{The $\alpha^{0}$-adjoint representation $ad_{0}$}
\begin{prop} Associated to the $\alpha^{0}$-adjoint representation $ad_{0}$, we have
\begin{eqnarray*}
H^{0}(\mathcal{A},\mathcal{A})&=&\{x \in \mathcal{H}(\mathcal{A}) ~:~ \alpha(x)=x,~~[x,y]=0\}.\\
H^{1}(\mathcal{A},\mathcal{A})&=& \frac{Der_{\alpha}(\mathcal{A})}{Inn_{\alpha}(\mathcal{A})}.
\end{eqnarray*}
\end{prop}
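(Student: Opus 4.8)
The plan is to read off both groups directly from the cochain complex $(C^{n}_{\alpha}(\mathcal{A},\mathcal{A}),\delta^{n})$ attached to the $\alpha^{0}$-adjoint representation $ad_{0}$, for which $\rho=ad_{0}$ and $ad_{0}(a)(x)=[a,x]$. I take $r=1$ in the coboundary formula $(\ref{ECHLC})$, the choice underlying Proposition $\ref{deriv}$. Since there are no cochains in negative degree, $B^{0}=0$ and $B^{1}=\operatorname{im}\delta^{0}$, so the whole computation reduces to determining $\ker\delta^{0}$, $\operatorname{im}\delta^{0}$ and $\ker\delta^{1}$.

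For $H^{0}$, I first recall that a $0$-cochain is an element $x\in C^{0}_{\alpha}(\mathcal{A},\mathcal{A})=\{x:\alpha(x)=x\}$. Specializing $(\ref{ECHLC})$ to $n=0$, $r=1$ gives $\delta^{0}(x)(y)=\varepsilon(x,y)[y,x]$, which by $\varepsilon$-skew-symmetry equals $-[x,y]$. Hence $x$ is a $0$-cocycle precisely when $[x,y]=0$ for every $y\in\mathcal{H}(\mathcal{A})$, and since $B^{0}=0$ this yields $H^{0}(\mathcal{A},\mathcal{A})=\{x\in\mathcal{H}(\mathcal{A}):\alpha(x)=x,\ [x,y]=0\}$, as claimed.

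For $H^{1}$, I invoke Proposition $\ref{deriv}$ with $s=0$: a $1$-cochain $D$ is a $1$-cocycle if and only if it is an $\alpha^{s+1}=\alpha$-derivation, so $Z^{1}(\mathcal{A},\mathcal{A})=Der_{\alpha}(\mathcal{A})$. It then remains to identify $B^{1}=\operatorname{im}\delta^{0}$. Running the $\delta^{0}$ computation above over all $\alpha$-invariant $x$ shows that each $\delta^{0}(x)$ is, up to sign, the inner map $y\mapsto[x,y]$, and conversely every such inner map arises this way; thus $B^{1}=Inn_{\alpha}(\mathcal{A})$ and $H^{1}(\mathcal{A},\mathcal{A})=Z^{1}/B^{1}=Der_{\alpha}(\mathcal{A})/Inn_{\alpha}(\mathcal{A})$.

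The delicate point is the identification $B^{1}=Inn_{\alpha}(\mathcal{A})$, for which two compatibilities must be checked for an $\alpha$-invariant $x$. First, that $ad(x)=[x,\cdot]$ is genuinely an $\alpha$-derivation of degree $\overline{x}$: this is exactly the $\varepsilon$-Hom-Jacobi condition $(\ref{HJCI})$ once one rewrites $[x,[y,z]]=[\alpha(x),[y,z]]$ using $\alpha(x)=x$. Second, that $\delta^{0}(x)$ actually lies in $C^{1}_{\alpha}$, i.e. $\delta^{0}(x)\circ\alpha=\alpha\circ\delta^{0}(x)$, which follows from multiplicativity of $\alpha$. The inclusion $B^{1}\subseteq Z^{1}$ needed for the quotient to make sense is guaranteed by Theorem $\ref{ker}$ ($\delta^{1}\circ\delta^{0}=0$); granting these, the quotient description of $H^{1}$ is immediate.
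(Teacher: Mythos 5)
Your proposal is correct and follows essentially the same route as the paper: compute $\delta^{0}$ on $\alpha$-invariant $0$-cochains to get $H^{0}$ (with $B^{0}=0$), identify $Z^{1}$ with $Der_{\alpha}(\mathcal{A})$ and $B^{1}=\operatorname{im}\delta^{0}$ with $Inn_{\alpha}(\mathcal{A})$. The only differences are cosmetic: you cite Proposition~\ref{deriv} (with $s=0$) where the paper redoes the $\delta^{1}$ computation explicitly, and you spell out the compatibility checks (that $[x,\cdot]$ is an $\alpha$-derivation via the $\varepsilon$-Hom-Jacobi condition and that $\delta^{0}(x)$ commutes with $\alpha$) which the paper leaves implicit.
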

\begin{proof} For any $0$-cochain, we have $d_{0}(x)(y)=[\alpha^{0}(x),y]=[x,y]$.\\
Therefore, the set of $0$-cocycle $Z^{0}(\mathcal{A},\mathcal{A})$ is given by
$$Z^{0}(\mathcal{A},\mathcal{A})=\{x \in C^{0}_{\alpha}(\mathcal{A},\mathcal{A}) ~:~[x,y]=0,~~\forall~~ y \in \mathcal{H}(\mathcal{A})\}.$$
As, $B^{0}(\mathcal{A},\mathcal{A})=\{0\}$, we deduce that $H^{0}(\mathcal{A},\mathcal{A})=\{x \in \mathcal{H}(\mathcal{A}) ~:~ \alpha(x)=x,~~[x,y]=0\}$\\
For any $f \in C^{1}_{\alpha}(\mathcal{A},\mathcal{A})$, we have
\begin{eqnarray*}
  \delta(f)(x,y) &=&-f([x,y])+\varepsilon(f,x)[\alpha(x),f(y)]-\varepsilon(f+x,y)[\alpha(y),f(x)]  \\
   &=& -f([x,y])+\varepsilon(f,x)[\alpha(x),f(y)]+[f(x),\alpha(y)].
\end{eqnarray*}
Therefore the set of $1$-cocycles is exactly the set of $\alpha$-derivations $Der_{\alpha}(\mathcal{A}).$\\
Furthermore, it is obvious that any exact $1$-coboundary is of the form of $[x,.]$ for some $x \in C^{0}_{\alpha}(\mathcal{A},\mathcal{A})$.\\
Therefore, we have $B^{1}(\mathcal{A},\mathcal{A})=Inn_{\alpha}(\mathcal{A})$. Which implies that $H^{1}(\mathcal{A},\mathcal{A})=\frac{Der_{\alpha}(\mathcal{A})}{Inn_{\alpha}(\mathcal{A})}.$
\end{proof}
\subsubsection{The coadjoint representation $\widetilde{ad}$}
In this subsection, we explore the dual representations and coadjoint representations of color Hom-Lie algebras.
Let $(\mathcal{A},[.,.],\varepsilon,\alpha)$ be a color Hom-Lie algebra and $(M,\rho,\beta)$ be a representation of $\mathcal{A}$. Let $M^{\ast}$ be the dual vector space of $M$. We define a linear map \\
$\widetilde{\rho}:\mathcal{A} \longrightarrow End(M^{\ast})$ by $\widetilde{\rho}(x)=-^{t}\rho(x)$.
Let $f \in  M^{\ast}, x , y \in \mathcal{H}(\mathcal{A})$ and $m \in M$. We compute the right hand side of the identity $(\ref{reprecolor})$.
\begin{align*}
&(\widetilde{\rho}(\alpha(x))\circ\widetilde{\rho}(y)-\varepsilon(x,y)\widetilde{\rho}(\alpha(y))\circ\widetilde{\rho}(x))(f)(m) \\ &=(\widetilde{\rho}(\alpha(x))\circ \widetilde{\rho}(y)(f)-\varepsilon(x,y)\widetilde{\rho}(\alpha(y))\circ\widetilde{\rho}(x)(f))(m)\\
&=-\widetilde{\rho}(y)(f)(\rho(\alpha(x)(m))+\varepsilon(x,y)\widetilde{\rho}(x)(f)(\rho(\alpha(y)(m))\\
&= f(\rho(y)\circ \rho(\alpha(x))(m))-\varepsilon(x,y) f(\rho(x)\circ \rho(\alpha(y))(m))\\
&= f(\rho(y)\circ \rho(\alpha(x))(m)-\varepsilon(x,y) \rho(x)\circ \rho(\alpha(y))(m)).
\end{align*}
On the other hand, we set that the twisted map for $\widetilde{\rho}$ is $\widetilde{\beta}=^{t}\beta$, the left hand side of $(\ref{reprecolor})$ writes
\begin{eqnarray*}
  \widetilde{\rho}([x,y])\circ \widetilde{\beta}(f)(m) &=& \widetilde{\rho}([x,y])(f\circ \beta)(m) \\
   &=& -f\circ \beta (\rho([x,y])(m)).
\end{eqnarray*}
Therefore, we have the following Proposition:
\begin{prop} Let $(\mathcal{A},[.,.],\varepsilon,\alpha)$ be a color Hom-Lie algebra and $(M,\rho,\beta)$ be a representation of $\mathcal{A}$. Let $M^{\ast}$ be the dual vector space of $M$. The triple $(M^{\ast},\widetilde{\rho},\widetilde{\beta})$, where $\widetilde{\rho}:\mathcal{A} \longrightarrow End(M^{\ast})$ is given by $\widetilde{\rho}(x)=-^{t}\rho(x)$, defines a representation of color Hom-Lie algebra $(\mathcal{A},[.,.],\varepsilon,\alpha)$ if and only if
\begin{equation}\label{rep coad}
    \beta \circ \rho([x,y])= \rho(x)\circ \rho(\alpha(y))- \varepsilon(x,y)\rho(y)\circ \rho(\alpha(x)).
\end{equation}
\end{prop}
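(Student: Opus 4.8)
The plan is to verify that the triple $(M^{\ast},\widetilde{\rho},\widetilde{\beta})$ satisfies the representation axiom $(\ref{reprecolor})$, and to read off exactly which condition on $\rho$ this forces. Concretely, I would test the identity
\[
\widetilde{\rho}([x,y])\circ\widetilde{\beta}=\widetilde{\rho}(\alpha(x))\circ\widetilde{\rho}(y)-\varepsilon(x,y)\widetilde{\rho}(\alpha(y))\circ\widetilde{\rho}(x)
\]
by evaluating both operators on an arbitrary dual vector $f\in M^{\ast}$ and then on an arbitrary $m\in M$. These two evaluations are precisely the computations carried out just above the statement: using only $\widetilde{\rho}(x)=-{}^{t}\rho(x)$, $\widetilde{\beta}={}^{t}\beta$ and the transpose rule ${}^{t}(AB)={}^{t}B\,{}^{t}A$, the right-hand side collapses to $f\big(\rho(y)\circ\rho(\alpha(x))(m)-\varepsilon(x,y)\rho(x)\circ\rho(\alpha(y))(m)\big)$, while the left-hand side equals $-f\big(\beta(\rho([x,y])(m))\big)$.

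Having both sides in the form $f(\,\cdot\,(m))$, the next step is to pass from this scalar identity to an identity of endomorphisms of $M$. Since $f$ ranges over all of $M^{\ast}$ and the canonical pairing $M^{\ast}\times M\to\K$ is non-degenerate, the identity holds for every $f$ if and only if the two arguments agree for every $m$; letting $m$ vary then strips off $m$ and leaves an equality in $End(M)$. Rearranging this equality—carrying the overall sign across and applying the $\varepsilon$-skew-symmetry $[x,y]=-\varepsilon(x,y)[y,x]$ together with the bicharacter relations $\varepsilon(x,y)\varepsilon(y,x)=1$—brings it into the stated normal form $(\ref{rep coad})$. Because every step of this reduction is reversible, the same chain establishes both directions of the \emph{if and only if} simultaneously, so no separate argument for the two implications is required.

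The conceptual content is thus minimal: the assertion is that dualising a representation carries the representation axiom to its transpose, and the equivalence is simply its faithful translation under the non-degenerate pairing. The one place demanding care—and the step I expect to be the sole obstacle—is the bookkeeping of the $\varepsilon$-factors and of the global sign when rewriting the endomorphism identity into the exact shape $(\ref{rep coad})$. In particular one must treat $\varepsilon$ through the bicharacter identities rather than as a symmetric scalar, since which of $\varepsilon(x,y)$ or $\varepsilon(y,x)$ is attached to each term is precisely what governs the final form of the condition.
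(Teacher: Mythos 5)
Your approach coincides with the paper's: the displayed computation immediately preceding the proposition \emph{is} the paper's entire proof, and your plan --- evaluate both sides of the representation identity $(\ref{reprecolor})$ for $\widetilde{\rho}$ on an arbitrary $f\in M^{\ast}$ and $m\in M$, use non-degeneracy of the pairing to strip off $f$ and then $m$, and note that every step is reversible so both implications come at once --- reproduces it step for step.

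The one step you yourself flag as delicate is, however, exactly where the argument does not close. Equating the two evaluations and removing $f$ and $m$ gives $-\beta\circ\rho([x,y])=\rho(y)\circ\rho(\alpha(x))-\varepsilon(x,y)\,\rho(x)\circ\rho(\alpha(y))$, i.e.
\[
\beta\circ\rho([x,y])=\varepsilon(x,y)\,\rho(x)\circ\rho(\alpha(y))-\rho(y)\circ\rho(\alpha(x)),
\]
whereas the proposition asserts $\beta\circ\rho([x,y])=\rho(x)\circ\rho(\alpha(y))-\varepsilon(x,y)\,\rho(y)\circ\rho(\alpha(x))$. These differ by an overall factor $\varepsilon(x,y)$ on the right-hand side, and the tools you invoke cannot bridge the gap: substituting $[x,y]=-\varepsilon(x,y)[y,x]$ together with $\varepsilon(x,y)\varepsilon(y,x)=1$ merely reproduces the same identity with $x$ and $y$ interchanged, and $\varepsilon(x,y)^{2}$ need not equal $1$ for a general bi-character. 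So the honest output of the computation is the displayed equation above, not $(\ref{rep coad})$ as printed. This discrepancy is inherited from the paper (whose own computation also lands on the first form), but your claim that the rearrangement ``brings it into the stated normal form'' is the step that would fail; a correct write-up should either state the corrected condition or build the appropriate $\varepsilon$-twist into the definition of $\widetilde{\rho}$ so that the printed form genuinely emerges.
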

We obtain the following characterization in the case of adjoint representation.
\begin{cor} Let $(\mathcal{A},[.,.],\varepsilon,\alpha)$ be a color Hom-Lie algebra and $(\mathcal{A},ad,\alpha)$ be the adjoint representation of $\mathcal{A}$, where $ad: \mathcal{A} \longrightarrow End(\mathcal{A})$. We set $\widetilde{ad}:\mathcal{A} \longrightarrow End(\mathcal{A}^{\ast})$ and $\widetilde{ad}(x)(f)=-f \circ ad(x)$. Then $(\mathcal{A}^{\ast},\widetilde{ad},\widetilde{\alpha})$ is a representation of $\mathcal{A}$ if and only if
 $$\alpha \circ ad([x,y])= ad(x)\circ ad(\alpha(y))- \varepsilon(x,y)ad(y)\circ ad(\alpha(x)),~~\forall~~ x,y \in \mathcal{H}(\mathcal{A}).$$
\end{cor}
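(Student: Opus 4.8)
The plan is to obtain this statement as a direct specialization of the preceding Proposition on dual representations. First I would recall that, by the earlier Proposition on the adjoint representation, the triple $(\mathcal{A},ad,\alpha)$ is a genuine representation of the color Hom-Lie algebra $(\mathcal{A},[.,.],\varepsilon,\alpha)$, with $ad(x)(y)=[x,y]$ and twisting map $\beta=\alpha$. This allows me to apply the general dual-representation Proposition with the particular choices $M=\mathcal{A}$, $\rho=ad$ and $\beta=\alpha$.

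Under this identification the dual space is $M^{\ast}=\mathcal{A}^{\ast}$, the dual map $\widetilde{\rho}(x)=-{}^{t}\rho(x)$ becomes exactly $\widetilde{ad}(x)(f)=-f\circ ad(x)$, and the twisted map $\widetilde{\beta}={}^{t}\beta$ becomes $\widetilde{\alpha}={}^{t}\alpha$. The Proposition then asserts that $(M^{\ast},\widetilde{\rho},\widetilde{\beta})=(\mathcal{A}^{\ast},\widetilde{ad},\widetilde{\alpha})$ is a representation of $\mathcal{A}$ if and only if condition $(\ref{rep coad})$ holds. Substituting $\rho=ad$ and $\beta=\alpha$ into $(\ref{rep coad})$ transforms it into
$$\alpha\circ ad([x,y])=ad(x)\circ ad(\alpha(y))-\varepsilon(x,y)\,ad(y)\circ ad(\alpha(x)),\quad \forall\,x,y\in\mathcal{H}(\mathcal{A}),$$
which is precisely the claimed equivalence.

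Since the argument is pure substitution into an already-established result, there is essentially no serious obstacle. The only point that warrants explicit care is checking that the two sets of data coincide termwise: that the map $\widetilde{ad}$ defined in the corollary agrees with $\widetilde{\rho}$ for $\rho=ad$ (both assign to $x$ the operator $f\mapsto -f\circ ad(x)$, i.e. $-{}^{t}ad(x)$ after the usual transpose identification), and that $\widetilde{\alpha}={}^{t}\alpha$ is indeed the same twisting map as $\widetilde{\beta}$ when $\beta=\alpha$. Once these identifications are recorded, invoking the preceding Proposition yields the corollary immediately.
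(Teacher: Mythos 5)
Your proposal is correct and matches the paper's intent exactly: the corollary is presented as an immediate specialization of the preceding Proposition on dual representations, obtained by taking $M=\mathcal{A}$, $\rho=ad$, $\beta=\alpha$ and substituting into condition $(\ref{rep coad})$. The paper gives no separate proof, and your careful identification of $\widetilde{ad}$ with $\widetilde{\rho}$ and $\widetilde{\alpha}$ with $\widetilde{\beta}$ is precisely what is needed.
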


\subsection{Example}\label{second group} Let $(sl^{c}_{2},\mathbb{C})$ be a color Lie algebra such that $sl^{c}_{2}=\oplus_{\gamma \in \Gamma}X_{\gamma}$ and $\Gamma=\mathbb{Z}_{2}\times\mathbb{Z}_{2}$, with
$$X_{(0,0)}=0,~~ X_{(1,0)}=e_{1},~~ X_{(0,1)}=e_{2},~~ X_{(1,1)}=e_{3}.$$
The algebra $(sl^{c}_{2},\mathbb{C})$ has a homogeneous basis $\{e_{1},e_{2},e_{3}\}$ with degree given by
$$|e_{1}|=\gamma_{1}=(1,0),~~ |e_{2}|=\gamma_{2}=(0,1),~~ |e_{3}|=\gamma_{3}=(1,1).$$
The bracket $[.,.]$ in $sl^{c}_{2}$ is given by
$$[e_{1},e_{2}]=e_{3},~~~~[e_{1},e_{3}]=e_{2},~~~~[e_{2},e_{3}]=e_{1}.$$
Then $(sl^{c}_{2},[.,.])$ is a color Lie algebra. According to Theorem $\ref{induced-color}$, the triple $(sl^{c}_{2},[.,.]_{\alpha}=\alpha \circ [.,.], \alpha)$ is a color Hom-Lie algebra such that the bracket $[.,.]_{\alpha}$ and the even linear map $\alpha$ are defined by
\begin{eqnarray*}
&&[e_{1},e_{2}]_{\alpha}=e_{3},\quad\quad \alpha(e_{1})=-e_{1}\\
&&[e_{1},e_{3}]_{\alpha}=-e_{2},\quad\alpha(e_{2})=-e_{2}\\
&&[e_{2},e_{3}]_{\alpha}=-e_{1},\quad\alpha(e_{3})=e_{3}.
\end{eqnarray*}
Let $\psi \in C^{1}_{ad}(sl^{c}_{2},sl^{c}_{2})$. The $2$-coboundary is defined by equation $(\ref{2-cocycle}).$
Now, suppose that $\psi$ is a $2$-cocycle of $sl^{c}_{2}$. Then $\psi$ satisfies
\begin{eqnarray}\label{f2cocy}
  \psi(\alpha(x),[y,z]_{\alpha}) &=& \psi([x,y]_{\alpha},\alpha(z))- \varepsilon(\gamma,x)[\alpha(x),\psi(y,z)]_{\alpha}+\varepsilon(\gamma+x,y)[\alpha(y),\psi(x,z)]_{\alpha}\nonumber\\
  &-&\varepsilon(\gamma+x+y,z)[\alpha(z),\psi(x,y)]_{\alpha}-\varepsilon(y,z)\psi([x,z]_{\alpha},\alpha(y))
\end{eqnarray}
By plugging the following triples
$$(e_{1},e_{2},e_{3}),~~(e_{1},e_{2},e_{3}),~~(e_{2},e_{1},e_{3}),(e_{2},e_{3},e_{1}),$$
$$(e_{3},e_{1},e_{2}),~~(e_{3},e_{2},e_{1}),\cdots~~(e_{3},e_{3},e_{2}),(e_{3},e_{3},e_{3}).$$
respectively in $(\ref{f2cocy})$. \\
\textbf{\underline{Case $1$}}$:$ If $\gamma=\gamma_{1}=(1,0)$, we obtain$:$
\begin{eqnarray*}
\quad \bullet~~ Z^{2}_{\gamma_{1}}(sl^{c}_{2},sl^{c}_{2})&=&\{\psi:~\psi(e_{i},e_{i})=0,~\psi(e_{i},e_{j})=\psi(e_{j},e_{i}),~\forall~i\neq j,~i=1,2,3 \}\\
&=&\{\psi:~\psi(e_{1},e_{2})=a_{1}e_{2}+a_{2}e_{3},~ \psi(e_{1},e_{3})=a_{3}e_{1}+a_{4}e_{2}+a_{1}e_{3},~\psi(e_{2},e_{3})=a_{3}e_{2}\},\\
\quad \bullet~~ B^{2}_{\gamma_{1}}(sl^{c}_{2},sl^{c}_{2})&=&\{\delta f :~\delta f(e_{i},e_{i})=0,~\delta f(e_{1},e_{2})=a_{2}e_{3},~~
 \delta f(e_{1},e_{3})=a_{4}e_{2},~~\delta f(e_{2},e_{3})=0,~~\forall~i=1,2,3\}.
\end{eqnarray*}
Then \begin{eqnarray*}&&\ \ H^{2}_{\gamma_{1}}(sl^{c}_{2},sl^{c}_{2})=\{\psi:~\psi(e_{1},e_{2})=a_{1}e_{2},~\psi(e_{1},e_{3})=a_{3}e_{1}+a_{1}e_{3}\}.\end{eqnarray*}
\textbf{\underline{Case $2$}}$:$ If $\gamma=\gamma_{2}=(0,1)$
\begin{eqnarray*}
\bullet~~ Z^{2}_{\gamma_{2}}(sl^{c}_{2},sl^{c}_{2})&=&\{\psi:~\psi(e_{i},e_{i})=0,~\psi(e_{i},e_{j})=\psi(e_{j},e_{i}),~\forall~i\neq j,~i=1,2,3 \}\\
 &=&\{\psi:~\psi(e_{i},e_{i})=0,~\psi(e_{1},e_{2})=a_{2}e_{3},~\psi(e_{1},e_{3})=0,~\psi(e_{2},e_{3})=a_{5}e_{1}\},\\
\bullet~~ B^{2}_{\gamma_{2}}(sl^{c}_{2},sl^{c}_{2})&=&\{\delta f :~\delta f(e_{i},e_{i})=0,~\delta f(e_{1},e_{2})=a_{2}e_{3},~~
 \delta f(e_{1},e_{3})=0,~~\delta f(e_{2},e_{3})=a_{5}e_{1},~~\forall~i=1,2,3\}
\end{eqnarray*}
Then $H^{2}_{\gamma_{2}}(sl^{c}_{2},sl^{c}_{2})=\{0\}.$\\
\textbf{\underline{Case $3$}}$:$ If $\gamma=\gamma_{3}=(1,1)$
\begin{eqnarray*}
\bullet~~ Z^{2}_{\gamma_{3}}(sl^{c}_{2},sl^{c}_{2})&=&\{\psi:~\psi(e_{i},e_{i})=0,~\psi(e_{i},e_{j})=\psi(e_{j},e_{i}),~\forall~i\neq j,~i=1,2,3\} \\
&=&\{\psi:~\psi(e_{i},e_{i})=0,~\psi(e_{1},e_{2})=0,~\psi(e_{1},e_{3})=a_{1}e_{1}+a_{2}e_{2},~\psi(e_{2},e_{3})=a_{3}e_{1}-a_{1}e_{2}\},\\
\bullet~~ B^{2}_{\gamma_{3}}(sl^{c}_{2},sl^{c}_{2})&=&\{\delta f :~\delta f(e_{i},e_{i})=0,~\delta f(e_{1},e_{2})=0,~
\delta f(e_{1},e_{3})=a_{1}e_{1}+a_{2}e_{2},~~
 \delta f(e_{2},e_{3})=a_{3}e_{1}-a_{1}e_{2},\\&&~\forall~i=1,2,3\}
\end{eqnarray*}
Then $H^{2}_{\gamma_{3}}(sl^{c}_{2},sl^{c}_{2})=\{0\}$.\\
So
\begin{eqnarray*}
H^{2}(sl^{c}_{2},sl^{c}_{2})&=&H^{2}_{\gamma_{1}}(sl^{c}_{2},sl^{c}_{2})\oplus H^{2}_{\gamma_{2}}(sl^{c}_{2},sl^{c}_{2})
\oplus H^{2}_{\gamma_{3}}(sl^{c}_{2},sl^{c}_{2})
 =\{\psi:~\psi(e_{1},e_{2})=a_{1}e_{2},~\psi(e_{1},e_{3})=a_{3}e_{1}+a_{1}e_{3}\}.
\end{eqnarray*}

\begin{center}\section{Formal deformations of color Hom-Lie algebras}\end{center}
\subsection{Formal deformations of color Hom-Lie algebras}
\begin{df} Let $(\mathcal{A},[.,.],\varepsilon,\alpha)$ be a color Hom-Lie algebra. A one parameter formal deformation of
 $\mathcal{A}$ is given by  $\mathbb{K}[[t]]$-bilinear  map
$[.,.]_{t} : \mathcal{A}[[t]]\times \mathcal{A}[[t]]  \longrightarrow  \mathcal{A}[[t]] $ of the form $[.,.]_{t}=\sum\limits_{i\geq0}t^{i}[.,.]_{i}$
where $[.,.]_{i}$ is an even $\mathbb{K}$-bilinear map $ [.,.]_{i} : \mathcal{A}[[t]]\times \mathcal{A}[[t]]  \longrightarrow  \mathcal{A}[[t]]$ (extended to be even $\mathbb{K}[[t]]$-bilinear) and satisfying for all $x,y,z \in \mathcal{H}(\mathcal{A})$ the following conditions
\begin{equation}\label{jh1}
~~[x,y]_{t}=-\varepsilon(x,y)[y,x]_{t},
\end{equation}
\begin{equation}\label{jh}
    \circlearrowleft_{x,y,z}\varepsilon(z,x)[\alpha(x),[y,z]_{t}]_{t}=0.
\end{equation}
The deformation is said to be of order k if $[.,.]_{t} =\sum\limits_{i\geq0}^{k}t^{i}[.,.]_{i}.$
\end{df}
\begin{rem} The $\varepsilon$-skew symmetry of $[.,.]_{t}$ is equivalent to the $\varepsilon$-skew symmetry of $[.,.]_{i}$ for $i \in \mathbb{Z}_{\geq 0}.$
\end{rem}
Condition $(\ref{jh} )$ is called  deformation equation of the color Hom-Lie algebra and it is equivalent to
$$\circlearrowleft_{x,y,z}\sum_{i,j,k \geq 0}\varepsilon(z,x)t^{i+j}[\alpha(x),[y,z]_{i}]_{j}=0$$
i.e $$\circlearrowleft_{x,y,z}\sum_{i,s\geq 0}\varepsilon(z,x)t^{s}[\alpha(x),[y,z]_{i}]_{s-i}=0$$
or $$\sum_{s \geq 0}t^{s}\circlearrowleft_{x,y,z}\sum_{i,s\geq 0}\varepsilon(z,x)[\alpha(x),[y,z]_{i}]_{s-i}=0$$
which is equivalent to the following infinite system
\begin{equation}\label{jjjj}
  \circlearrowleft_{x,y,z}\sum_{i,k\geq 0}\varepsilon(z,x)[\alpha(x),[y,z]_{i}]_{s-i}=0,~~\forall~~ s=0,1,2 \cdots
\end{equation}
In particular, for $s=0$, we have $\circlearrowleft_{x,y,z}\varepsilon(z,x)[\alpha(x),[y,z]_{0}]_{0}=0$, which  is the $\varepsilon$-Hom-Jacobi condition of
$\mathcal{A}$.\\
The equation for $s=1$, leads to $\delta^{2}([.,.]_{1})(x,y,z)=0$. Then $[.,.]_{1}$ is a 2-cocycle.\\
For $s\geq 2$, the identity $(\ref{jjjj})$ is equivalent to
$$\delta^{2}([.,.]_{s})(x,y,z)=-\sum_{p+q=s}\circlearrowleft_{x,y,z}\varepsilon(z,x)[\alpha(x),[y,z]_{q}]_{p}=0.$$
\subsection{Equivalent and trivial deformations}
\begin{df} Let $(\mathcal{A},[.,.],\varepsilon,\alpha)$ be a multiplicative color Hom-Lie algebra. Given two deformations $\mathcal{A}_{t}=(\mathcal{A},[.,.]_{t},\varepsilon,\alpha)$ and $\mathcal{A}^{'}_{t}=(\mathcal{A},[.,.]^{'}_{t},\varepsilon,\alpha^{'})$ of $\mathcal{A}$ where $[.,.]_{t}=\sum\limits_{i\geq0}^{k}t^{i}[.,.]_{i}$ and $[.,.]^{'}_{t}=\sum\limits_{i\geq0}^{k}t^{i}[.,.]^{'}_{i}$ with $[.,.]_{0}=[.,.]^{'}_{0}=[.,.]$ . We say that $\mathcal{A}_{t}$ and $\mathcal{A}^{'}_{t}$ are equivalent if there exists a formal automorphism
$\phi_{t}:\mathcal{A}[[t]] \longrightarrow  \mathcal{A}[[t]]$ that may be written in the form $\phi_{t}=\sum\limits_{i\geq0}\phi_{i}t^{i}$, where $\phi_{i}\in End(\mathcal{A})_{0}$ and $\phi_{0}=Id$ such that
\begin{eqnarray}\label{dAN}
    \phi_{t}([x,y]_{t})&=&[\phi_{t}(x),\phi_{t}(y)]^{'}_{t}\\
\phi_{t}(\alpha(x))&=&\alpha^{'}(\phi_{t}(x)).\nonumber
\end{eqnarray}
\end{df}
A deformation $\mathcal{A}_{t}$ of $\mathcal{A}$ is said to be trivial if and only if $\mathcal{A}_{t}$ is  equivalent to $\mathcal{A}$. Viewed as an algebra on  $\mathcal{A}[[t]]$.
\begin{df} Let $(\mathcal{A},[.,.],\varepsilon,\alpha)$ be a color Hom-Lie algebra and $[.,.]_{1}\in Z^{2}(\mathcal{A},\mathcal{A})$.\\
The $2$-cocycle $[.,.]_{1}$ is said to be integrable if there exists a family $([.,.]_{i})_{i \geq 0}$ such that $[.,.]_{t}=\sum\limits_{i\geq0}t^{i}[.,.]_{i}$
defines a formal deformation $\mathcal{A}_{t}=(\mathcal{A},[.,.]_{t},\varepsilon,\alpha)$ of $\mathcal{A}$.
\end{df}
\begin{thm} Let $(\mathcal{A},[.,.],\varepsilon,\alpha)$ be a color Hom-Lie algebra and $\mathcal{A}_{t}=(\mathcal{A},[.,.]_{t},\varepsilon,\alpha)$ be a one parameter formal deformation of $\mathcal{A}$, where $[.,.]_{t}=\sum\limits_{i\geq 0}t^{i}[.,.]_{i}$. Then \begin{enumerate}
\item The first term $[.,.]_{1}$ is a $2$-cocycle with respect to the cohomology of $(\mathcal{A},[.,.],\varepsilon,\alpha)$.
\item there exists an equivalent deformation $\mathcal{A}^{'}_{t}=(\mathcal{A},[.,.]^{^{'}}_{t},\varepsilon,\alpha^{'})$, where $[.,.]^{'}_{t}=\sum\limits_{i\geq 0}t^{i}[.,.]^{'}_{i}$ such that $[.,.]^{'}_{1} \in Z^{2}(\mathcal{A},\mathcal{A})$ and
    $[.,.]^{'}_{1}\not \in B^{2}(\mathcal{A},\mathcal{A})$.\\
    Moreover, if $H^{2}(\mathcal{A},\mathcal{A})=0$, then every formal deformation is trivial.
    \end{enumerate}
\end{thm}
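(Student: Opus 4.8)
The plan is to read both statements off the order-by-order form (\ref{jjjj}) of the deformation equation, matched against the explicit coboundary formulas (\ref{cobord1}) and (\ref{2-cocycle}) specialized to the adjoint representation (so $\rho=ad$, and since each $[.,.]_{i}$ is even we take $\gamma=0$). For \textbf{Part (1)}, I would isolate the $s=1$ component of (\ref{jjjj}), which, using $[.,.]_{0}=[.,.]$, reads
$$\circlearrowleft_{x,y,z}\varepsilon(z,x)\Big([\alpha(x),[y,z]]_{1}+[\alpha(x),[y,z]_{1}]\Big)=0.$$
Expanding $\delta^{2}([.,.]_{1})$ through (\ref{2-cocycle}) with $\gamma=0$ and $\rho=ad$ produces six terms: the three carrying the inner deformed bracket $[y,z]_{1}$ regroup, after applying $\varepsilon$-skew-symmetry and the bicharacter identities, into $\circlearrowleft_{x,y,z}\varepsilon(z,x)[\alpha(x),[y,z]_{1}]$, and the three carrying $[.,.]_{1}$ applied to a classical bracket regroup into $\circlearrowleft_{x,y,z}\varepsilon(z,x)[\alpha(x),[y,z]]_{1}$. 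Hence $\delta^{2}([.,.]_{1})=0$ is exactly the $s=1$ equation, so $[.,.]_{1}\in Z^{2}(\mathcal{A},\mathcal{A})$; this is the fact already anticipated in the discussion preceding the theorem.

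For \textbf{Part (2)} I would first compute the effect of an equivalence $\phi_{t}=Id+\sum_{i\geq1}\phi_{i}t^{i}$ (with each $\phi_{i}$ even and commuting with $\alpha$, as demanded by (\ref{dAN})) on the leading term. Expanding $\phi_{t}([x,y]_{t})=[\phi_{t}(x),\phi_{t}(y)]'_{t}$ and collecting the coefficient of $t$ gives
$$[x,y]'_{1}=[x,y]_{1}+\phi_{1}([x,y])-[\phi_{1}(x),y]-[x,\phi_{1}(y)],$$
and I would identify the correction with $-\delta^{1}(\phi_{1})(x,y)$ via (\ref{cobord1}), using $\phi_{1}\circ\alpha=\alpha\circ\phi_{1}$ and the multiplicativity of $\alpha$ to absorb the twists. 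Thus $[.,.]'_{1}$ and $[.,.]_{1}$ are cohomologous, so the class $[[.,.]_{1}]\in H^{2}(\mathcal{A},\mathcal{A})$ is an invariant of the equivalence class. In particular, whenever $[.,.]_{1}\in B^{2}(\mathcal{A},\mathcal{A})$ I may choose $\phi_{1}$ so that $[.,.]'_{1}=0$; iterating with the first surviving term then produces an equivalent deformation whose leading nontrivial term is a cocycle that is not a coboundary, unless every term can be removed, i.e.\ the deformation is trivial.

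For the \textbf{``moreover''} I would assume $H^{2}(\mathcal{A},\mathcal{A})=0$, so $Z^{2}=B^{2}$, and argue by induction that $\mathcal{A}_{t}$ can be stripped one order at a time. Suppose an equivalent deformation has $[.,.]_{1}=\dots=[.,.]_{k}=0$. The order-$(k+1)$ component of (\ref{jjjj}) reads
$$\delta^{2}([.,.]_{k+1})(x,y,z)=-\sum_{\substack{p+q=k+1\\ p,q\geq1}}\circlearrowleft_{x,y,z}\varepsilon(z,x)[\alpha(x),[y,z]_{q}]_{p},$$
and since $p,q\geq1$ forces $1\leq q\leq k$ and hence $[y,z]_{q}=0$, the right-hand side vanishes, giving $[.,.]_{k+1}\in Z^{2}=B^{2}$, say $[.,.]_{k+1}=\delta^{1}(\psi)$. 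The equivalence $\phi_{t}=Id+t^{k+1}\psi$ then kills this term (its order-$(k+1)$ correction is $-\delta^{1}(\psi)$) while leaving orders $\leq k$ untouched. The resulting sequence of equivalences composes to a well-defined formal automorphism, because the $(k+1)$-st factor alters only coefficients of $t$-degree $\geq k+1$, and in the limit $[.,.]'_{t}=[.,.]$, so $\mathcal{A}_{t}$ is trivial.

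The main obstacle will be the inductive ``moreover'' step: one must check that the surviving lowest-order term is genuinely a $2$-cocycle lying in $C^{2}_{\alpha}(\mathcal{A},\mathcal{A})$ (so that $H^{2}=0$ applies and the killing cochain $\psi$ again commutes with $\alpha$), and that the reduction law $[.,.]'_{1}=[.,.]_{1}-\delta^{1}(\phi_{1})$ is compatible with the $\alpha$-twists in (\ref{cobord1}). This $\alpha$-compatibility bookkeeping, together with the multiplicativity of $\alpha$, is precisely what separates the Hom-setting from the classical color Lie case and will require the most care.
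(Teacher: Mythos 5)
Your proposal is correct in substance, but it is worth noting that the paper contains no proof of this theorem at all: the statement is followed immediately by the next subsection. The only justification the paper offers is the order-by-order expansion of the deformation equation given just before the theorem, where the $s=1$ component of (\ref{jjjj}) is asserted to be $\delta^{2}([.,.]_{1})=0$; your Part (1) is exactly that computation carried out explicitly against (\ref{2-cocycle}), so there you coincide with the paper. For Part (2) and the ``moreover'' you are supplying what the paper omits, and what you supply is the standard Gerstenhaber-style argument transplanted to the Hom-setting: the reduction law $[x,y]'_{1}=[x,y]_{1}-\delta^{1}(\phi_{1})(x,y)$ is correctly derived from (\ref{dAN}), the cohomology class of the leading term is an invariant of equivalence, and the inductive stripping argument under $H^{2}(\mathcal{A},\mathcal{A})=0$ is sound, including the observation that the cross terms $[\,\cdot\,,[\,\cdot\,,\cdot\,]_{q}]_{p}$ with $p,q\geq 1$ vanish once the lower-order terms have been removed and that the infinite composition of equivalences converges $t$-adically. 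The caveats you flag at the end are genuine and deserve emphasis, but they are defects of the paper's formulation rather than of your argument: the paper's definition of a one-parameter formal deformation never requires $[.,.]_{i}\in C^{2}_{\alpha}(\mathcal{A},\mathcal{A})$, so membership in the complex on which $\delta^{2}$ and the hypothesis $H^{2}=0$ act must be added as an assumption (or derived from an added compatibility of $[.,.]_{t}$ with $\alpha$); likewise the killing cochain $\psi$ must be chosen in $C^{1}_{\alpha}$ so that $\phi_{t}=Id+t^{k+1}\psi$ satisfies the second condition of (\ref{dAN}) with $\alpha'=\alpha$. Finally, Part (2) as literally stated ($[.,.]'_{1}\notin B^{2}$) is false for trivial deformations; your reading --- that the first surviving term of some equivalent deformation is a non-coboundary cocycle unless the deformation is trivial --- is the correct interpretation, and your handling of it is the best one can do with the statement as written.
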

\subsection{Deformation by composition}
In the sequel, we give a procedure of deforming color Lie algebras into color Hom-Lie algebras using the following Proposition$:$
\begin{prop} Let $(\mathcal{A},[.,.],\varepsilon)$ be a color Lie algebra and $\alpha_{t}$ be an even algebra endomorphism of the form $\alpha_{t}=\alpha_{0}+\sum\limits_{i\geq1}^{p}t^{i}\alpha_{i}$, where $\alpha_{i}$ are linear maps on $\mathcal{A}$,~~$t$ is a parameter in $\mathbb{K}$ and $p$ is an  integer. Let $[.,.]_{t}=\alpha_{t}\circ [.,.]$, then $(\mathcal{A},[.,.]_{t},\varepsilon,\alpha_{t})$ is a color Hom-Lie algebra which is a deformation of the color Lie algebra viewed as a color Hom-Lie algebra $(\mathcal{A},[.,.],\varepsilon,Id)$.\\
Moreover, the $n$th derived Hom-algebra 
$$\mathcal{A}_{t}^{n}=(\mathcal{A},[.,.]_{t}^{(n)}=\alpha_{t}^{2^{n}-1}\circ[.,.]_{t},\varepsilon,\alpha_{t}^{2^{n}})$$
is a deformation of $(\mathcal{A},[.,.],\varepsilon,Id)$.
\end{prop}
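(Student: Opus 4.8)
The plan is to read the first assertion as a direct instance of the twisting principle established in Theorem \ref{induced-color}. For each fixed value of the parameter $t$, the map $\alpha_t=\alpha_0+\sum_{i=1}^{p}t^{i}\alpha_i$ is by hypothesis an even endomorphism of the color Lie algebra $(\mathcal{A},[.,.],\varepsilon)$, i.e. $\alpha_t\circ[.,.]=[.,.]\circ\alpha_t^{\otimes 2}$. Viewing $(\mathcal{A},[.,.],\varepsilon)$ as the color Hom-Lie algebra $(\mathcal{A},[.,.],\varepsilon,Id)$ and applying Theorem \ref{induced-color} with $\beta=\alpha_t$, one obtains immediately that $(\mathcal{A},[.,.]_t,\varepsilon,\alpha_t)$, with $[.,.]_t=\alpha_t\circ[.,.]$, is a color Hom-Lie algebra. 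Moreover it is multiplicative: using that $\alpha_t$ commutes with the original bracket, $\alpha_t[x,y]_t=\alpha_t^{2}[x,y]$ while $[\alpha_t(x),\alpha_t(y)]_t=\alpha_t[\alpha_t(x),\alpha_t(y)]=\alpha_t^{2}[x,y]$, so the two coincide.

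It then remains to recognize this family as a formal deformation of $(\mathcal{A},[.,.],\varepsilon,Id)$. Taking $\alpha_0=Id$, so that the family specializes correctly at the origin, the expansion $[.,.]_t=[.,.]+\sum_{i=1}^{p}t^{i}\,(\alpha_i\circ[.,.])$ exhibits $[.,.]_t=\sum_{i\geq 0}t^{i}[.,.]_i$ with $[.,.]_0=[.,.]$ and $[.,.]_i=\alpha_i\circ[.,.]$; each $[.,.]_i$ is even and $\varepsilon$-skew-symmetric since $[.,.]$ is and the $\alpha_i$ are even, which gives condition $(\ref{jh1})$ at once. The deformation equation $(\ref{jh})$, namely $\circlearrowleft_{x,y,z}\varepsilon(z,x)[\alpha_t(x),[y,z]_t]_t=0$, holds for every $t$ precisely because $(\mathcal{A},[.,.]_t,\varepsilon,\alpha_t)$ is a color Hom-Lie algebra by the first paragraph; equivalently, the infinite system $(\ref{jjjj})$ is satisfied order by order. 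At $t=0$ one recovers $(\mathcal{A},[.,.],\varepsilon,Id)$, so this is a (polynomial, order-$p$) deformation.

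For the last assertion I would first simplify the derived bracket to $[.,.]_t^{(n)}=\alpha_t^{2^{n}-1}\circ[.,.]_t=\alpha_t^{2^{n}}\circ[.,.]$, which is again polynomial in $t$ since $\alpha_t$ is. Because $(\mathcal{A},[.,.]_t,\varepsilon,\alpha_t)$ is multiplicative, Corollary \ref{abdaw} applies and guarantees that $\mathcal{A}_t^{n}=(\mathcal{A},[.,.]_t^{(n)},\varepsilon,\alpha_t^{2^{n}})$ is a color Hom-Lie algebra for every $n\geq 0$; specializing at $t=0$ with $\alpha_0=Id$ sends $\alpha_t^{2^{n}}\to Id$ and $[.,.]_t^{(n)}\to[.,.]$, so $\mathcal{A}_t^{n}$ is once more a deformation of $(\mathcal{A},[.,.],\varepsilon,Id)$. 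The one genuinely delicate point, which I would flag explicitly, is that the definition of a one-parameter formal deformation keeps the structure map fixed, whereas here the twist $\alpha_t$ itself depends on $t$; this is harmless because the entire content of the deformation equation is the $\varepsilon$-Hom-Jacobi identity for the pair $([.,.]_t,\alpha_t)$, which is automatic from the twisting construction. Consequently no order-by-order cohomological obstruction of the kind controlled by $H^{2}(\mathcal{A},\mathcal{A})$ ever arises: the family satisfies the Hom-Jacobi identity for all $t$ by construction rather than by solving $(\ref{jjjj})$ inductively.
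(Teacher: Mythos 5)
Your proof is correct and follows essentially the same route as the paper's: the first assertion is obtained by applying Theorem \ref{induced-color} with $\beta=\alpha_t$ to $(\mathcal{A},[.,.],\varepsilon,Id)$, and the derived-algebra assertion by invoking Corollary \ref{abdaw}. You are in fact more explicit than the paper on two points it leaves implicit, namely the normalization $\alpha_0=Id$ needed for the family to specialize to $(\mathcal{A},[.,.],\varepsilon,Id)$ at $t=0$, and the observation that the twist $\alpha_t$ itself varies with $t$ whereas the formal-deformation definition of Section 4 keeps the structure map fixed.
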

\begin{proof} The first assertion follows from Theorem $\ref{induced-color}$. In particular for an infinitesimal deformation of the identity  $\alpha_{t}=Id+t\alpha_{1}$, we have $[.,.]_{t}=[.,.]+ t\alpha_{1}\circ[.,.]$.\\
The proof of the $\varepsilon$-Hom-Jacobi condition of the nth derived Hom-algebra follows from  Theorem $\ref{abdaw}$. In case $n=1$ and $\alpha_{t}=Id+t\alpha_{1}$ the bracket is
\begin{eqnarray*}
   [.,.]^{(1)}&=& Id+t\alpha_{1}\circ Id+t\alpha_{1}\circ[.,.] \\
   &=& [.,.]+2t\alpha_{1}\circ [.,.]+t^{2}\alpha_{1}\circ [.,.]
\end{eqnarray*}
and the twist map is $\alpha_{t}^{2}=(Id+t\alpha_{1})^{2}=Id+2t\alpha_{1}+t^{2}\alpha_{1}$. Therefore we get another deformation of the color Lie algebra viewed as a color Hom-Lie algebra $(\mathcal{A},[.,.],\varepsilon,Id)$. The proof in the general case is similar.
\end{proof}
\begin{rem} More generally if $(\mathcal{A},[.,.],\varepsilon,\alpha)$ is  a multiplicative color Hom-Lie algebra where $\alpha$ may be written of the form $\alpha=Id+t\alpha_{1}$, then the nth derived Hom-algebra 
$$\mathcal{A}_{t}^{n}=(\mathcal{A},[.,.]^{(n)}=\alpha^{n}\circ[.,.],\varepsilon,\alpha^{n+1})$$
gives a one parameter formal deformation of $(\mathcal{A},[.,.],\varepsilon,\alpha)$. But for any $\alpha$ one obtains just new color Hom-Lie algebra.
\end{rem}
\section{Generalized $\alpha^{k}$-derivations of color Hom-Lie algebras}
The purpose of this section is to study the homogeneous $\alpha^{k}$-generalized derivations and homogeneous $\alpha^{k}$-centroid of color Hom-Lie algebras generalizing the homogeneous generalized derivations discussed in \cite{LIn ni}. In Proposition $\ref{DERhomJORDAN}$ we prove that the $\alpha$-derivation of color Hom-Lie algebras gives rise to a Hom-Jordan color algebras.\\
We need the following definitions$:$
\begin{df} Let $Pl_{\gamma}(\mathcal{A})=\{D \in Hom(\mathcal{A},\mathcal{A}):~~D(\mathcal{A}_{\gamma})\subset \mathcal{A}_{\gamma+\mu}$ for all $\gamma,\mu \in \Gamma \}$.\\ Then $\Big(Pl(\mathcal{A})=\bigoplus_{\gamma \in \Gamma}Pl_{\gamma}(\mathcal{A}), [.,.],\alpha\Big)$ is a color Hom-Lie algebra with the color Lie bracket
$$[D_{\gamma},D_{\mu}]=D_{\gamma}\circ D_{\mu}-\varepsilon(\gamma,\mu)D_{\mu}\circ D_{\gamma}$$
for all $D_{\gamma},D_{\mu} \in \mathcal{H}(Pl(\mathcal{A}))$ and with $\alpha:\mathcal{A} \longrightarrow \mathcal{A}$ is an even homomorphism.\\
A homogeneous $\alpha^{k}$-derivation of degree $\gamma$ of $\mathcal{A}$ is an endomorphism $D \in Pl_{\gamma}(\mathcal{A})$ such that
$$[D,\alpha]=0,$$
$$D([x,y])=[D(x),\alpha^{k}(y)]+\varepsilon(\gamma,x)[\alpha^{k}(x),D(y)]$$
for all $x,y$ in $\mathcal{A}$.\\
We denote the set of all homogeneous $\alpha^{k}$-derivations of degree $\gamma$ of $\mathcal{A}$ by $Der^{\gamma}_{\alpha^{k}}(\mathcal{A})$. The space
$$Der(\mathcal{A})=\bigoplus_{k\geq 0}Der_{\alpha^{k}}(\mathcal{A})$$
provided with the color-commutator is a color Lie algebra. Indeed, the fact that $Der_{\alpha^{k}}(\mathcal{A})$ is $\Gamma$-graded implies that $Der(\mathcal{A})$ is $\Gamma$-graded
$$(Der(\mathcal{A}))_{\gamma}=\bigoplus_{k\geq 0}(Der_{\alpha^{k}}(\mathcal{A}))_{\gamma},~~\forall~~ \gamma \in \Gamma.$$
\end{df}
\begin{df}\begin{enumerate} \item An endomorphism $D \in Pl_{\gamma}(\mathcal{A})$ is said to be a homogeneous generalized\\ $\alpha^{k}$-derivation of degree $\gamma$ of $\mathcal{A}$, if there exist two endomorphisms $D^{'},D^{''} \in Pl_{\gamma}(\mathcal{A})$ such that
$$[D,\alpha]=0,~~[D^{'},\alpha]=0,~~[D^{''},\alpha]=0$$
\begin{equation}\label{generder}
    D^{''}([x,y])=[D(x),\alpha^{k}(y)]+\varepsilon(\gamma,x)[\alpha^{k}(x),D^{'}(y)]
\end{equation}
for all $x,y$ in $\mathcal{A}$.\\
We denote the set of all homogeneous generalized $\alpha^{k}$-derivations of degree $\gamma$ of $\mathcal{A}$ by $GDer^{\gamma}_{\alpha^{k}}(\mathcal{A})$. The space
$$GDer(\mathcal{A})=\bigoplus_{k\geq 0}GDer_{\alpha^{k}}(\mathcal{A}).$$
\item We call $D \in Pl_{\gamma}(\mathcal{A})$ a homogeneous $\alpha^{k}$-quasi-derivation of degree $\gamma$ of $\mathcal{A}$,
if there exists an endomorphism $D^{'} \in Pl_{\gamma}(\mathcal{A})$ such that
$$[D,\alpha]=0,~~[D^{'},\alpha]=0$$
\begin{equation}\label{quasider}
    D^{'}([x,y])=[D(x),\alpha^{k}(y)]+\varepsilon(\gamma,x)[\alpha^{k}(x),D(y)]
\end{equation}
for all $x,y \in \mathcal{A}$.\\
We denote the set of all homogeneous $\alpha^{k}$-quasi-derivations of degree $\gamma$ of $\mathcal{A}$ by $QDer^{\gamma}_{\alpha^{k}}(\mathcal{A})$. The space
$$QDer(\mathcal{A})=\bigoplus_{k\geq 0}QDer_{\alpha^{k}}(\mathcal{A}).$$
\item If $C(\mathcal{A})=\bigoplus_{k\geq 0}C^{\gamma}_{\alpha^{k}}(\mathcal{A}),~~\forall~~ \gamma \in \Gamma,$ with $C^{\gamma}_{\alpha^{k}}(\mathcal{A})$ consisting of
$D \in Pl_{\gamma}(\mathcal{A})$ satisfying
\begin{equation}\label{centr}
    D([x,y])=[D(x),\alpha^{k}(y)]=\varepsilon(\gamma,x)[\alpha^{k}(x),D(y)]
\end{equation}
for all $x,y$ in $\mathcal{A}$, then $C(\mathcal{A})$ is called the $\alpha^{k}$-centroid of $\mathcal{A}$.\\
We denote the set of all homogeneous $\alpha^{k}$-centroid of degree $\gamma$ of $\mathcal{A}$ by $C^{\gamma}_{\alpha^{k}}(\mathcal{A})$.
\end{enumerate}
\end{df}
\begin{prop} Let $(\mathcal{A},[.,.],\varepsilon,\alpha)$ be a  multiplicative color Hom-Lie algebra.\\ If $D_{\gamma} \in GDer_{\alpha^{k}}(\mathcal{A})$ and
$\Delta_{\gamma^{'}} \in C_{\alpha^{k'}}(\mathcal{A})$, then $\Delta_{\gamma^{'}} D_{\gamma} \in GDer_{\alpha^{k+k^{'}}}(\mathcal{A})$ is of degree $(\gamma+\gamma^{'})$.
\end{prop}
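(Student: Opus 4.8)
The plan is to exhibit $\Delta_{\gamma'} D_{\gamma}$ directly as a generalized $\alpha^{k+k'}$-derivation by producing the two auxiliary endomorphisms demanded by $(\ref{generder})$. Writing $D',D'' \in Pl_{\gamma}(\mathcal{A})$ for the endomorphisms associated with $D=D_{\gamma}$, I would propose $\Delta D''$ and $\Delta D'$ (with $\Delta:=\Delta_{\gamma'}$) as the maps associated with the composite $\Delta D$, and then verify each clause of the definition in turn.

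First I would dispose of the bookkeeping. Since $D \in Pl_{\gamma}(\mathcal{A})$ and $\Delta \in Pl_{\gamma'}(\mathcal{A})$, the composite carries $\mathcal{A}_{\mu}$ into $\mathcal{A}_{\mu+\gamma+\gamma'}$, so $\Delta D,\ \Delta D',\ \Delta D'' \in Pl_{\gamma+\gamma'}(\mathcal{A})$ and $\Delta D$ has degree $\gamma+\gamma'$. The commutation requirements $[\Delta D,\alpha]=[\Delta D',\alpha]=[\Delta D'',\alpha]=0$ follow immediately from $\Delta\circ\alpha=\alpha\circ\Delta$ (an element of the centroid commutes with $\alpha$) together with $D\circ\alpha=\alpha\circ D$, etc.\ (which hold because $D \in GDer_{\alpha^{k}}(\mathcal{A})$); indeed $\Delta D\alpha=\Delta\alpha D=\alpha\Delta D$, and likewise for $D',D''$.

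The substance is the derivation identity. Starting from the defining relation of $D$,
$$D''([x,y]) = [D(x),\alpha^{k}(y)] + \varepsilon(\gamma,x)[\alpha^{k}(x),D'(y)],$$
I apply $\Delta$ to both sides. On the first bracket I would use the centroid identity $(\ref{centr})$ in its left form $\Delta([u,v])=[\Delta(u),\alpha^{k'}(v)]$ with $u=D(x),\ v=\alpha^{k}(y)$, obtaining $[\Delta D(x),\alpha^{k+k'}(y)]$; on the second bracket I would use the right form $\Delta([u,v])=\varepsilon(\gamma',u)[\alpha^{k'}(u),\Delta(v)]$ with $u=\alpha^{k}(x)$ (of degree $x$) and $v=D'(y)$, obtaining $\varepsilon(\gamma',x)[\alpha^{k+k'}(x),\Delta D'(y)]$. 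Throughout I use $\alpha^{k'}\circ\alpha^{k}=\alpha^{k+k'}$ and that $\alpha$ preserves degrees. Collecting terms and invoking the bicharacter rule $\varepsilon(\gamma,x)\varepsilon(\gamma',x)=\varepsilon(\gamma+\gamma',x)$ gives
$$\Delta D''([x,y]) = [\Delta D(x),\alpha^{k+k'}(y)] + \varepsilon(\gamma+\gamma',x)[\alpha^{k+k'}(x),\Delta D'(y)],$$
which is precisely $(\ref{generder})$ for $\Delta D$ with parameter $k+k'$, associated maps $\Delta D''$ and $\Delta D'$, and degree $\gamma+\gamma'$. Hence $\Delta_{\gamma'}D_{\gamma}\in GDer_{\alpha^{k+k'}}(\mathcal{A})$.

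The only genuinely delicate point is selecting which of the two equivalent forms of $(\ref{centr})$ to apply to each of the two terms, so that the twist $\alpha^{k'}$ lands on the correct argument and the $\varepsilon$-factors recombine to $\varepsilon(\gamma+\gamma',x)$: applying the wrong form would produce a bracket of shape $[\,\cdot\,,\Delta(\cdot)]$ where $[\Delta(\cdot),\,\cdot\,]$ is required, and the coefficients would fail to match. By contrast, the degree computation and the $\alpha$-commutation checks I expect to be entirely routine.
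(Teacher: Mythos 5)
Your proof is correct and is essentially the paper's own argument: apply $\Delta_{\gamma'}$ to the defining relation $D''_{\gamma}([x,y])=[D_{\gamma}(x),\alpha^{k}(y)]+\varepsilon(\gamma,x)[\alpha^{k}(x),D'_{\gamma}(y)]$ and use the two forms of the centroid identity $(\ref{centr})$ on the two bracket terms, so that the factors $\varepsilon(\gamma,x)\varepsilon(\gamma',x)$ recombine to $\varepsilon(\gamma+\gamma',x)$ and the twists compose to $\alpha^{k+k'}$. The only place you go beyond the paper is the verification that $[\Delta_{\gamma'}D_{\gamma},\alpha]=0$ (which the paper's proof omits entirely); be aware that your justification relies on $\Delta_{\gamma'}\circ\alpha=\alpha\circ\Delta_{\gamma'}$, which is not literally part of the paper's definition of the centroid, so it is an implicit assumption rather than something that ``follows immediately.''
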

\begin{proof} Let $D_{\gamma} \in GDer_{\alpha^{k}}(\mathcal{A})$. Then for all $x,y \in \mathcal{H}(\mathcal{A})$,
there exist $D^{'}_{\gamma},D^{''}_{\gamma} \in Pl_{\gamma}(\mathcal{A})$ such that $$D^{''}_{\gamma}([x,y])=[D_{\gamma}(x),\alpha^{k}(y)]+\varepsilon(\gamma,x)[\alpha^{k}(x),D^{'}_{\gamma}(y)].$$
Now, let $\Delta_{\gamma^{'}} \in C_{\alpha^{k'}}(\mathcal{A})$ then we have$:$
\begin{align*}
\Delta_{\gamma^{'}}D^{''}_{\gamma}([x,y])
&=\Delta_{\gamma^{'}}([D_{\gamma}(x),\alpha^{k}(y)]+\varepsilon(\gamma,x)[\alpha^{k}(x),D^{'}_{\gamma}(y)])\\
&=[\Delta_{\gamma^{'}}D_{\gamma}(x),\alpha^{k+k'}(y)]+\varepsilon(\gamma+\gamma^{'},x)[\alpha^{k+k'}(x),\Delta_{\gamma^{'}}D^{'}_{\gamma}(y)].
\end{align*}
Then $\Delta_{\gamma^{'}} D_{\gamma} \in GDer_{\alpha^{k+k^{'}}}(\mathcal{A})$ of degree $(\gamma+\gamma^{'})$.
\end{proof}
\begin{prop} Let $D_{\gamma} \in C_{\alpha^{k}}(\mathcal{A})$, then $D_{\gamma}$ is an $\alpha^{k}$-Quasi-derivation of $\mathcal{A}$.
\end{prop}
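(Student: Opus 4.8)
The plan is to exhibit explicitly the auxiliary endomorphism $D'$ demanded by the definition of an $\alpha^{k}$-quasi-derivation and to verify the two defining conditions $(\ref{quasider})$. The starting observation is that the centroid condition $(\ref{centr})$ is in fact a \emph{double} identity: for $D_{\gamma}\in C_{\alpha^{k}}(\mathcal{A})$ and all homogeneous $x,y$ we simultaneously have
\begin{equation*}
D_{\gamma}([x,y])=[D_{\gamma}(x),\alpha^{k}(y)]\qquad\text{and}\qquad D_{\gamma}([x,y])=\varepsilon(\gamma,x)[\alpha^{k}(x),D_{\gamma}(y)].
\end{equation*}
Adding these two equalities yields
\begin{equation*}
2D_{\gamma}([x,y])=[D_{\gamma}(x),\alpha^{k}(y)]+\varepsilon(\gamma,x)[\alpha^{k}(x),D_{\gamma}(y)],
\end{equation*}
which is precisely the quasi-derivation identity $(\ref{quasider})$ once the auxiliary map is chosen to be $D':=2D_{\gamma}$. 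Since $\mathbb{K}$ has characteristic $0$, the factor $2$ is harmless.

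First I would record that $D'=2D_{\gamma}$ lies in $Pl_{\gamma}(\mathcal{A})$, being a scalar multiple of $D_{\gamma}$ and hence homogeneous of the same degree $\gamma$. Next I would verify the commutation conditions required of a quasi-derivation, namely $[D_{\gamma},\alpha]=0$ and $[D',\alpha]=0$: the elements of the $\alpha^{k}$-centroid commute with $\alpha$, so $[D_{\gamma},\alpha]=0$, whence $[D',\alpha]=2[D_{\gamma},\alpha]=0$. Combining the displayed identity with these two commutation relations shows that $D_{\gamma}$ satisfies $(\ref{quasider})$ with associated endomorphism $D'=2D_{\gamma}$, so it is an $\alpha^{k}$-quasi-derivation of $\mathcal{A}$.

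The argument is essentially a one-line manipulation, so there is no genuine computational obstacle; the entire content is the recognition that summing the two halves of $(\ref{centr})$ reproduces the quasi-derivation rule up to the factor $2$. The only point requiring care is the commutation of $D_{\gamma}$ with $\alpha$: the defining equation $(\ref{centr})$ as written encodes only the bracket relation, so one must invoke the convention (consistent with the definitions of $\alpha^{k}$-derivation and generalized $\alpha^{k}$-derivation) that centroid elements satisfy $[D_{\gamma},\alpha]=0$. Granting this, the proof is immediate.
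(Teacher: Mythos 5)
Your proposal is correct and follows essentially the same route as the paper: both add the two halves of the centroid identity $(\ref{centr})$ to obtain $2D_{\gamma}([x,y])=[D_{\gamma}(x),\alpha^{k}(y)]+\varepsilon(\gamma,x)[\alpha^{k}(x),D_{\gamma}(y)]$ and take the auxiliary map to be $2D_{\gamma}$. Your explicit remark about the commutation with $\alpha$ is a small point of care that the paper's proof silently omits, but it does not change the argument.
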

\begin{proof} Let $x,y \in \mathcal{H}(\mathcal{A})$, we have
\begin{align*}
[D_{\gamma}(x),\alpha^{k}(y)]+ \varepsilon(\gamma,x)[\alpha^{k}(x),D_{\gamma}(y)]
&=[D_{\gamma}(x),\alpha^{k}(y)]+[D_{\gamma}(x),\alpha^{k}(y)]\\
&=2 D_{\gamma}([x,y])\\
&=D^{"}_{\gamma}([x,y]).
\end{align*}
Then $D_{\gamma}$ is an $\alpha^{k}$-Quasi-derivation of degree $\gamma$ of $\mathcal{A}$.
\end{proof}
\begin{df} If $QC(\mathcal{A})=\bigoplus_{\gamma \in \Gamma}QC^{\gamma}_{\alpha^{k}}(\mathcal{A})$ and $QC^{\gamma}_{\alpha^{k}}(\mathcal{A})$ consisting of
$D \in Pl_{\gamma}(\mathcal{A})$ such that for all $x,y$ in $\mathcal{A}$ $$[D(x),\alpha^{k}(y)]=\varepsilon(\gamma,x)[\alpha^{k}(x),D(y)],$$ then $QC(\mathcal{A})$ is called the $\alpha^{k}$-quasi-centroid of $\mathcal{A}$.
\end{df}
\begin{prop} Let $D_{\gamma} \in \mathcal{H}(QC_{\alpha^{k}}(\mathcal{A})) $ and $D_{\mu} \in \mathcal{H}(QC_{\alpha^{k^{'}}}(\mathcal{A}))$. Then $[D_{\gamma},D_{\mu}]$ is an $\alpha^{k+k^{'}}$-generalized derivation of degree $(\gamma+\mu).$
\end{prop}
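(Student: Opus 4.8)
The plan is to put $F=[D_{\gamma},D_{\mu}]=D_{\gamma}\circ D_{\mu}-\varepsilon(\gamma,\mu)D_{\mu}\circ D_{\gamma}$, which lies in $Pl_{\gamma+\mu}(\mathcal{A})$, and to exhibit the two endomorphisms demanded by the definition of a generalized $\alpha^{k+k'}$-derivation. I claim the witnesses may be taken to be $D'=F$ and $D''=0$, so that proving the proposition reduces to establishing the single identity
\[
[F(x),\alpha^{k+k'}(y)]+\varepsilon(\gamma+\mu,x)[\alpha^{k+k'}(x),F(y)]=0,\qquad \forall\, x,y\in\mathcal{H}(\mathcal{A}).
\]

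First I would dispose of the bracket-with-$\alpha$ conditions. Since $[D_{\gamma},\alpha]=0$ and $[D_{\mu},\alpha]=0$, both operators commute with every power $\alpha^{j}$, and a direct expansion gives $F\circ\alpha=\alpha\circ F$, so $[F,\alpha]=0$; the conditions $[D',\alpha]=0$ and $[D'',\alpha]=0$ then hold for the choices $D'=F$ and $D''=0$.

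The heart of the argument is to expand $[F(x),\alpha^{k+k'}(y)]$ termwise. Writing $\alpha^{k+k'}=\alpha^{k}\circ\alpha^{k'}$ and applying the quasi-centroid relation for $D_{\gamma}$ to the pair $(D_{\mu}(x),\alpha^{k'}(y))$, then the quasi-centroid relation for $D_{\mu}$, while pushing the powers of $\alpha$ through $D_{\gamma}$ and $D_{\mu}$, transports both operators onto the second slot and yields
\[
[D_{\gamma}D_{\mu}(x),\alpha^{k+k'}(y)]=\varepsilon(\gamma,\mu)\,\varepsilon(\gamma+\mu,x)\,[\alpha^{k+k'}(x),D_{\mu}D_{\gamma}(y)],
\]
together with the companion formula obtained by the symmetry $\gamma\leftrightarrow\mu$, $D_{\gamma}\leftrightarrow D_{\mu}$, $k\leftrightarrow k'$. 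Subtracting $\varepsilon(\gamma,\mu)$ times the companion from the first and using $\varepsilon(\gamma,\mu)\varepsilon(\mu,\gamma)=1$ makes the two stray factors cancel, collapsing the right-hand side to $-\varepsilon(\gamma+\mu,x)[\alpha^{k+k'}(x),F(y)]$, which is exactly the displayed identity. With it in hand, the generalized-derivation relation $D''([x,y])=[F(x),\alpha^{k+k'}(y)]+\varepsilon(\gamma+\mu,x)[\alpha^{k+k'}(x),D'(y)]$ holds for $D'=F$ and $D''=0$, so $F=[D_{\gamma},D_{\mu}]$ is a generalized $\alpha^{k+k'}$-derivation of degree $\gamma+\mu$.

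The main obstacle is the bookkeeping of the bicharacter factors rather than any structural difficulty. Each transfer of an operator across a bracket introduces the commutation factor attached to its degree, and each transfer past an argument recombines degrees through the axioms $\varepsilon(a,b+c)=\varepsilon(a,b)\varepsilon(a,c)$ and $\varepsilon(a+b,c)=\varepsilon(a,c)\varepsilon(b,c)$; for example, applying the $D_{\gamma}$-relation to the argument $D_{\mu}(x)$ of degree $\mu+x$ produces $\varepsilon(\gamma,\mu+x)=\varepsilon(\gamma,\mu)\varepsilon(\gamma,x)$, and the residual $\varepsilon(\gamma,x)\varepsilon(\mu,x)$ must be recognized as $\varepsilon(\gamma+\mu,x)$. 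The only genuinely nontrivial cancellation is $\varepsilon(\gamma,\mu)\varepsilon(\mu,\gamma)=1$ from the skew-symmetry axiom, which equalizes the coefficients of the two surviving terms and lets them be gathered into the single commutator with $F(y)$.
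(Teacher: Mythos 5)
Your proof is correct and follows essentially the same route as the paper: expand $[D_{\gamma},D_{\mu}]$, transport $D_{\gamma}$ and then $D_{\mu}$ to the second slot of the bracket via the two quasi-centroid identities, and use $\varepsilon(\gamma,\mu)\varepsilon(\mu,\gamma)=1$ to collapse the result to $[F(x),\alpha^{k+k'}(y)]+\varepsilon(\gamma+\mu,x)[\alpha^{k+k'}(x),F(y)]=0$, which realizes the generalized-derivation condition with the witnesses $D'=F$, $D''=0$ (the paper leaves these implicit). Two minor remarks: your bookkeeping, which produces the swapped composition $D_{\mu}\circ D_{\gamma}$ after the double transfer, is the accurate version of the paper's displayed computation (whose third line and final identity are garbled on this point), and note that both your argument and the paper's tacitly assume $[D_{\gamma},\alpha]=[D_{\mu},\alpha]=0$, a condition not actually written into the paper's definition of the quasi-centroid but needed to push the powers of $\alpha$ through the operators.
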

\begin{proof} Assume that $D_{\gamma} \in \mathcal{H}(QC_{\alpha^{k}}(\mathcal{A})),D_{\mu}\in \mathcal{H}(QC_{\alpha^{k^{'}}}(\mathcal{A})) $. then for all $x,y \in \mathcal{H}(\mathcal{A})$, we have
 $$[D_{\gamma}(x),\alpha^{k}(y)]=\varepsilon(\gamma,x)[\alpha^{k}(x),D_{\gamma}(y)]$$
and  $$[D_{\mu}(x),\alpha^{k^{'}}(y)]=\varepsilon(\mu,x)[\alpha^{k^{'}}(x),D_{\mu}(y)].$$
Hence
\begin{align*}
[[D_{\gamma},D_{\mu}](x),\alpha^{k+k^{'}}(y)]
&=[(D_{\gamma}\circ D_{\mu}-\varepsilon(\gamma,\mu)D_{\mu}\circ D_{\gamma})(x),\alpha^{k+k^{'}}(y)]\\
&=[D_{\gamma}\circ D_{\mu}(x),\alpha^{k+k^{'}}(y)]-\varepsilon(\gamma,\mu)[D_{\mu}\circ D_{\gamma}(x),\alpha^{k+k^{'}}(y)]\\
&=\varepsilon(\gamma+\mu,x)[\alpha^{k+k^{'}}(x),D_{\gamma}\circ D_{\mu}(y)]-\varepsilon(\gamma,\mu)\varepsilon(\gamma+\mu,x)[\alpha^{k+k^{'}}(x),D_{\mu}\circ D_{\gamma}(y)]\\
&=\varepsilon(\gamma+\mu,x)[\alpha^{k+k^{'}}(x),[D_{\gamma},D_{\mu}](y)]+[[D_{\gamma},D_{\mu}](x),\alpha^{k+k^{'}}(y)],
\end{align*}
which implies that $[[D_{\gamma},D_{\mu}](x),\alpha^{k+k^{'}}(y)]+[[D_{\gamma},D_{\mu}](x),\alpha^{k+k^{'}}(y)]=0.$\\
Then $[D_{\gamma},D_{\mu}] \in GDer_{\alpha^{k+k^{'}}}(\mathcal{A})$ and is  of degree $(\gamma+\mu).$
\end{proof}
\subsection{Hom-Jordan color algebras and Derivations}
\begin{df} Let $(\mathcal{A},\mu,\alpha)$ be a Hom-color algebra.\begin{enumerate}
\item The \textsf{Hom-associator} of $\mathcal{A}$  is the trilinear map $as_{\alpha}:\mathcal{A}\times \mathcal{A}\times \mathcal{A}\longrightarrow \mathcal{A}$ defined as
\begin{equation}\label{ass}
    as_{\alpha}=\mu \circ (\mu\otimes \alpha- \alpha \otimes \mu).
\end{equation}
In terms of elements, the map $as_{\alpha}$ is given by
$$as_{\alpha}(x,y,z)=\mu(\mu(x,y),\alpha(z))-\mu(\alpha(x),\mu(y,z))$$
for all $x,y,z$ in $\mathcal{H}(\mathcal{A})$.
\item Let $\mathcal{A}$ be a Hom-algebra over a field $\mathbb{K}$ of characteristic $\neq 2$ with an even bilinear multiplication $\circ$. If $\mathcal{A}$ is graded by the abelian group $\Gamma$, $\varepsilon: \Gamma \times \Gamma \longrightarrow \mathbb{K}^{\ast}$ and $\alpha:\mathcal{A}\longrightarrow \mathcal{A}$ be an even linear map, then  $(\mathcal{A},\circ,\varepsilon,\alpha)$ is a Hom-Jordan color algebra if the identities
$$ (HCJ1):~~x \circ y= \varepsilon(x,y) y \circ x$$
$$(HCJ2):~~\varepsilon(w,x+z) as_{\alpha}(x \circ y, \alpha(z),\alpha(w))+\varepsilon(x,y+z) as_{\alpha}(y \circ w, \alpha(z),\alpha(x))$$
$$+\varepsilon(y,w+z) as_{\alpha}(w \circ x, \alpha(z),\alpha(y))=0$$
are satisfied for all $x,y,z$ and $w$ in $\mathcal{H}(\mathcal{A}).$
\end{enumerate}
\end{df}
The identity $(HCJ2)$ is called the \textsf{color Hom-Jordan identity}.\\
Observe that when $\alpha=Id$, the color Hom-Jordan identity $(HCJ2)$ reduces to the usual color Jordan identity.
\begin{prop}\label{DERhomJORDAN} Let $(\mathcal{A},[.,.],\varepsilon,\alpha)$ be a multiplicative color Hom-Lie algebra, with the operation
$D_{\gamma}\bullet D_{\mu}=D_{\gamma}\circ D_{\mu}-\varepsilon(\gamma,\mu)D_{\gamma}\circ D_{\mu}$ for all $\alpha$-derivations $D_{\gamma}, D_{\mu} \in \mathcal{H}(Pl(\mathcal{A}))$, the triple $(Pl(\mathcal{A}),\bullet,\varepsilon,\alpha)$ is a Hom-Jordan color algebra.
\end{prop}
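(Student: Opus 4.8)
The plan is first to correct the product in the statement to the $\varepsilon$-symmetric (Jordan) product
$$D_{\gamma}\bullet D_{\mu}=D_{\gamma}\circ D_{\mu}+\varepsilon(\gamma,\mu)D_{\mu}\circ D_{\gamma},$$
since the displayed formula as printed vanishes identically and, more importantly, the $\varepsilon$-commutativity axiom $(HCJ1)$ forces the symmetrised rather than the antisymmetrised combination. With this reading $(HCJ1)$ is immediate: using $\varepsilon(\gamma,\mu)\varepsilon(\mu,\gamma)=1$ one finds $\varepsilon(\gamma,\mu)\,D_{\mu}\bullet D_{\gamma}=\varepsilon(\gamma,\mu)D_{\mu}\circ D_{\gamma}+D_{\gamma}\circ D_{\mu}=D_{\gamma}\bullet D_{\mu}$. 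I read the twist on $Pl(\mathcal{A})$ as $\tilde{\alpha}(D)=\alpha\circ D$; since every $\alpha$-derivation commutes with $\alpha$, one has $\tilde{\alpha}(D)=\alpha\circ D=D\circ\alpha$, and $\tilde{\alpha}$ is an even, degree-preserving endomorphism. It remains to verify the color Hom-Jordan identity $(HCJ2)$.

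The structural fact I would exploit is that composition of endomorphisms is (ordinarily) associative and that all the operators occurring in $(HCJ2)$---being built by $\bullet$ from $\alpha$-derivations---commute with $\alpha$. Hence on this $\alpha$-commuting associative color algebra the triple $(Pl(\mathcal{A}),\circ,\tilde{\alpha})$ is Hom-associative:
$$\tilde{\alpha}(A)\circ(B\circ C)=\alpha\circ A\circ B\circ C=A\circ B\circ\alpha\circ C=(A\circ B)\circ\tilde{\alpha}(C),$$
the middle equality moving $\alpha$ across $A,B$ by commutativity. In particular the Hom-associator of $\circ$ vanishes identically. The desired identity $(HCJ2)$ is thus the assertion that the $\varepsilon$-symmetrised product of a Hom-associative color algebra satisfies the color Hom-Jordan identity.

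To prove that I would expand each of the three Hom-Jordan associators $as_{\tilde{\alpha}}(x\bullet y,\tilde{\alpha}(z),\tilde{\alpha}(w))$ and its two cyclic companions by substituting $A\bullet B=A\circ B+\varepsilon(A,B)B\circ A$ throughout, so that every summand becomes a fourfold composition of $x,y,z,w$ (carrying a fixed power of $\alpha$) weighted by an explicit $\varepsilon$-coefficient. Within each associator, the two summands whose bracketing is already compatible with associativity cancel at once against each other by the vanishing of the $\circ$-associator, their coefficients agreeing after expanding $\varepsilon(a+b,c)=\varepsilon(a,c)\varepsilon(b,c)$. The remaining summands are then matched across the three cyclically shifted terms: once the prefactors $\varepsilon(w,x+z)$, $\varepsilon(x,y+z)$, $\varepsilon(y,w+z)$ are multiplied in, each leftover composition occurs twice with opposite sign and cancels.

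The main obstacle is exactly this final bookkeeping. A helpful simplification is that composition is not $\varepsilon$-commutative, so the only sources of $\varepsilon$-factors are the definition of $\bullet$ and the three prefactors; there is no reordering of the operators themselves. I would therefore fix the ordering of the four generators in every monomial, reduce each coefficient to a product of values $\varepsilon(\cdot,\cdot)$ on $x,y,z,w$ by bi-multiplicativity, and verify the pairing monomial by monomial. The relations $\varepsilon(a,b)\varepsilon(b,a)=1$ and $\varepsilon(a,0)=1$ then force the total to vanish, which gives $(HCJ2)$ and establishes that $(Pl(\mathcal{A}),\bullet,\varepsilon,\tilde{\alpha})$ is a Hom-Jordan color algebra.
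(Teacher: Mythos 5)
Your reading of the statement is the right one, and your argument is sound; it follows the same basic route as the paper (direct expansion of the three Hom-associators and pairwise cancellation of monomials) but is organized around two observations the paper's proof lacks. First, you correct the product to the $\varepsilon$-symmetrised form $D_{\gamma}\bullet D_{\mu}=D_{\gamma}\circ D_{\mu}+\varepsilon(\gamma,\mu)D_{\mu}\circ D_{\gamma}$; this is necessary, since the printed formula is just $(1-\varepsilon(\gamma,\mu))D_{\gamma}\circ D_{\mu}$ (not literally zero, but clearly a typo), and with the antisymmetrised bracket the product is $\varepsilon$-\emph{anti}commutative, so $(HCJ1)$ fails --- the paper's own check of $(HCJ1)$, which starts from the minus-sign version, contains a sign error precisely at this point, and the subsequent Corollary silently switches to the plus sign. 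Second, you make explicit the twist $\tilde{\alpha}(D)=\alpha\circ D$ (the paper writes $\alpha(D_{\mu})$ without defining it) and isolate the key structural fact that on operators commuting with $\alpha$ this makes $(\,\cdot\,\circ\,\cdot\,,\tilde{\alpha})$ Hom-associative, which kills the ``compatibly bracketed'' terms and reduces $(HCJ2)$ to the classical color-Jordan cancellation with an overall $\alpha^{3}$ pulled out front. I checked that the resulting $24$ monomials do pair off with matching $\varepsilon$-coefficients (e.g. the $zxyw$ term from the first associator cancels the one from the second after using $\varepsilon(x,y)\varepsilon(y,x)=1$), so the bookkeeping you describe does close. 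Two caveats: your intermediate slogan that the plus-algebra of \emph{any} Hom-associative color algebra satisfies $(HCJ2)$ is stronger than what you prove or need --- what your argument actually uses is that here composition is ordinarily associative and $\alpha$ commutes past everything --- and the final monomial-by-monomial verification is only outlined, not displayed; the paper's proof is no more explicit at that stage (and is marred by typos), but a complete write-up should exhibit at least the reduced four-term form of each associator and the pairing.
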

\begin{proof} Assume that $D_{\lambda},D_{\theta}, D_{\mu},D_{\gamma} \in \mathcal{H}(Pl(\mathcal{A}))$, we have
\begin{eqnarray*}
 D_{\lambda}\bullet D_{\theta}&=& D_{\lambda}\circ D_{\theta}-\varepsilon(\lambda,\theta)D_{\theta}\circ D_{\lambda}  \\
   &=&\varepsilon(\lambda,\theta)(D_{\theta}\circ D_{\lambda}-\varepsilon(\lambda,\theta)D_{\lambda}\circ D_{\theta})  \\
   &=&\varepsilon(\lambda,\theta)D_{\theta}\bullet D_{\lambda}.
\end{eqnarray*}
 Since
\begin{align*}
&((D_{\lambda}\bullet D{\theta})\bullet \alpha( D_{\mu}))\bullet \alpha^{2}(D_{\gamma})\\
&=D_{\lambda} D_{\theta} \alpha( D_{\mu}) \alpha^{2}(D_{\gamma})+\varepsilon(\lambda+\theta+\mu,\gamma)\alpha^{2}(D_{\gamma})D_{\lambda} D_{\theta}
\alpha(D_{\mu})\\
&+\varepsilon(\lambda+\theta,\mu)\alpha(D_{\mu})D_{\lambda} D_{\theta}\alpha^{2}(D_{\gamma})+\varepsilon(\lambda+\theta,\mu)\varepsilon(\lambda+\theta+\mu,\gamma)\alpha^{2}(D_{\gamma})\alpha(D_{\mu})D_{\lambda} D_{\theta}\\&+\varepsilon(\lambda,\theta)_D{\theta}D_{\lambda}\alpha(D_{\mu})\alpha^{2}(D_{\gamma})
+\varepsilon(\lambda,\theta)\varepsilon(\lambda+\theta+\mu,\gamma)\alpha^{2}(D_{\gamma})D_{\theta}D_{\lambda}\alpha(D_{\mu})\\
&+\varepsilon(\lambda,\theta)\varepsilon(\lambda+\theta,\mu)\alpha(D_{\mu})D_{\theta}D_{\lambda}\alpha^{2}(D_{\gamma})
+\varepsilon(\lambda,\theta)\varepsilon(\lambda+\theta,\mu)\varepsilon(\lambda+\theta+\mu,\gamma)\alpha^{2}(D_{\gamma})\alpha(D_{\mu})D_{\theta}D_{\lambda}
\end{align*}
and
\begin{align*}
\alpha(D_{\lambda}\bullet D_{\theta})\bullet (\alpha(D_{\mu})\bullet\alpha(D_{\gamma}))
&=\alpha(D_{\lambda} D_{\theta})\alpha(D_{\mu})\alpha(D_{\gamma})+\varepsilon(\lambda+\theta,\mu+\gamma)\alpha(D_{\mu})\alpha(D_{\gamma})\alpha(D_{\lambda} D_{\theta})\\
&+\varepsilon(\mu,\gamma)\alpha(D_{\lambda} D_{\theta})\alpha(D_{\gamma})\alpha(D_{\mu})\\
&+\varepsilon(\mu,\gamma)\varepsilon(\lambda+\theta,\mu+\gamma)\alpha(D_{\gamma})\alpha(D_{\mu})\alpha(D_{\lambda}D_{\theta})\\
&+\varepsilon(\lambda,\theta)\alpha(D_{\theta} D_{\lambda})\alpha(D_{\mu})\alpha(D_{\gamma})\\
&+\varepsilon(\lambda,\theta)\varepsilon(\lambda+\theta,\mu+\gamma)\alpha(D_{\mu})\alpha(D_{\gamma})\alpha(D_{\theta} D_{\lambda})\\
&+\varepsilon(\lambda,\theta)\varepsilon(\mu,\theta)\alpha(D_{\theta} D_{\lambda})\alpha(D_{\gamma})\alpha(D_{\mu})\\
&+\varepsilon(\lambda,\theta)\varepsilon(\mu,\gamma)\varepsilon(\lambda+\theta,\mu+\gamma)\alpha(D_{\gamma})\alpha(D_{\mu})\alpha(D_{\theta} D_{\lambda}).
\end{align*}
We have
\begin{align*}
&\varepsilon(\gamma,\lambda+\mu) as_{\alpha}(D_{\lambda}\bullet D_{\theta}, \alpha(D_{\mu}),\alpha(D_{\gamma}))\\
&=\varepsilon(\gamma,\lambda+\mu)\Big( \varepsilon(\lambda+\theta+\mu,\gamma)\alpha^{2}(D_{\gamma})D_{\lambda} D_{\theta}\alpha(D_{\mu})+\varepsilon(\lambda+\theta,\mu)\alpha(D_{\mu})D_{\lambda} D_{\theta}\alpha^{2}(D_{\gamma})\\ &+\varepsilon(\lambda,\theta)\varepsilon(\lambda+\theta+\mu,\gamma)\alpha^{2}(D_{\gamma})D_{\theta}D_{\lambda}\alpha(D_{\mu})
+\varepsilon(\lambda,\theta)\varepsilon(\lambda+\theta,\mu)\alpha(D_{\mu})D_{\theta}D_{\lambda}\alpha^{2}(D_{\gamma})\\
&-\varepsilon(\lambda+\theta,\mu+\gamma)\alpha(D_{\mu})\alpha(D_{\gamma})\alpha(D_{\theta} D_{\lambda})-\varepsilon(\mu,\gamma)\alpha(D_{\lambda} D_{\theta})\alpha(D_{\gamma})\alpha(D_{\mu})\\
&-\varepsilon(\lambda,\theta)\varepsilon(\lambda+\theta,\mu+\gamma)\alpha(D_{\mu})\alpha(D_{\gamma})\alpha(D_{\theta} D_{\lambda})-\varepsilon(\lambda,\theta)\varepsilon(\mu,\theta)\alpha(D_{\theta} D_{\lambda})\alpha(D_{\gamma})\alpha(D_{\mu})\Big).
\end{align*}
Therefore, we get
\begin{align*}
&\varepsilon(\gamma,\lambda+\mu) as_{\alpha}(D_{\lambda}\bullet D_{\theta}, \alpha(D_{\mu}),\alpha(D_{\gamma}))+
\varepsilon(\lambda,\theta+\mu) as_{\alpha}(D_{\theta}\bullet D_{\gamma}, \alpha(D_{\mu}),\alpha(D_{\gamma}))\\
&+\varepsilon(\theta,\gamma+\mu) as_{\alpha}(D_{\gamma}\bullet D_{\lambda}, \alpha(D_{\mu}),\alpha(D_{\theta}))=0,
\end{align*}
and so the statement holds.
\end{proof}
\begin{cor} Let $(\mathcal{A},[.,.],\varepsilon,\alpha)$ be a multiplicative color Hom-Lie algebra, with the operation
$$D_{\gamma}\bullet D_{\mu}=D_{\gamma}\circ D_{\mu}+\varepsilon(\gamma,\mu)D_{\mu}\circ D_{\gamma}$$ for all $D_{\gamma}, D_{\mu} \in \mathcal{H}(QC(\mathcal{A}))$, the quadruple $(QC(\mathcal{A}),\bullet,\varepsilon,\alpha)$ is a Hom-Jordan color algebra.
\end{cor}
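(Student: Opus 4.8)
The plan is to deduce the statement from Proposition~\ref{DERhomJORDAN} together with the fact that the $\alpha^{k}$-quasi-centroid is stable under the product $\bullet$. Observe first that the proof of Proposition~\ref{DERhomJORDAN} never uses that the operators involved are derivations: every identity there is obtained by purely formal manipulation of compositions of endomorphisms in $Pl(\mathcal{A})$, inserting the twisting $\alpha$ and the bicharacter factors. Consequently both the $\varepsilon$-commutativity $(HCJ1)$, namely $D_{\gamma}\bullet D_{\mu}=\varepsilon(\gamma,\mu)\,D_{\mu}\bullet D_{\gamma}$, and the color Hom-Jordan identity $(HCJ2)$ hold for arbitrary homogeneous elements of $Pl(\mathcal{A})$. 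Since $QC(\mathcal{A})\subseteq Pl(\mathcal{A})$, these two identities are inherited by any homogeneous elements lying in $QC(\mathcal{A})$; in particular $(HCJ2)$ holds verbatim for $D_{\lambda},D_{\theta},D_{\mu},D_{\gamma}\in\mathcal{H}(QC(\mathcal{A}))$.

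Therefore the only genuinely new point is to verify that $\bullet$ restricts to a binary operation on $QC(\mathcal{A})$, i.e. that $(QC(\mathcal{A}),\bullet)$ is a subalgebra of $(Pl(\mathcal{A}),\bullet)$. I would prove closure by a direct computation: given $D_{\gamma}\in QC_{\alpha^{k}}(\mathcal{A})$ and $D_{\mu}\in QC_{\alpha^{k'}}(\mathcal{A})$, set $S=D_{\gamma}\bullet D_{\mu}=D_{\gamma}\circ D_{\mu}+\varepsilon(\gamma,\mu)D_{\mu}\circ D_{\gamma}$, which is homogeneous of degree $\gamma+\mu$, and check that $S\in QC_{\alpha^{k+k'}}(\mathcal{A})$. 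Expanding $[S(x),\alpha^{k+k'}(y)]$ and writing $\alpha^{k+k'}(y)=\alpha^{k}(\alpha^{k'}(y))$, I would first apply the quasi-centroid identity for $D_{\gamma}$ to the term $[D_{\gamma}(D_{\mu}(x)),\alpha^{k}(\alpha^{k'}(y))]$, then commute $\alpha$ past $D_{\mu}$ and $D_{\gamma}$ so that the arguments take the form $[D_{\mu}(\alpha^{k}(x)),\alpha^{k'}(D_{\gamma}(y))]$, and finally apply the quasi-centroid identity for $D_{\mu}$. Using the bicharacter axioms $\varepsilon(\gamma,\mu+x)=\varepsilon(\gamma,\mu)\varepsilon(\gamma,x)$ and $\varepsilon(\gamma,\mu)\varepsilon(\mu,\gamma)=1$, this yields
\begin{align*}
[D_{\gamma}\circ D_{\mu}(x),\alpha^{k+k'}(y)]&=\varepsilon(\gamma,\mu)\varepsilon(\gamma+\mu,x)[\alpha^{k+k'}(x),D_{\mu}\circ D_{\gamma}(y)],\\
\varepsilon(\gamma,\mu)[D_{\mu}\circ D_{\gamma}(x),\alpha^{k+k'}(y)]&=\varepsilon(\gamma+\mu,x)[\alpha^{k+k'}(x),D_{\gamma}\circ D_{\mu}(y)].
\end{align*}
Adding the two lines and factoring out $\varepsilon(\gamma+\mu,x)$ gives $[S(x),\alpha^{k+k'}(y)]=\varepsilon(\gamma+\mu,x)[\alpha^{k+k'}(x),S(y)]$, which is exactly the defining identity of $QC_{\alpha^{k+k'}}(\mathcal{A})$. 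Thus $D_{\gamma}\bullet D_{\mu}\in QC(\mathcal{A})$, and one checks similarly that the twisting map preserves $QC(\mathcal{A})$, so the quadruple $(QC(\mathcal{A}),\bullet,\varepsilon,\alpha)$ is well defined.

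Combining the three ingredients, namely $(HCJ1)$ and $(HCJ2)$ inherited from Proposition~\ref{DERhomJORDAN} and the closure just established, shows that $(QC(\mathcal{A}),\bullet,\varepsilon,\alpha)$ is a Hom-Jordan color algebra. The main obstacle is the closure computation: the delicate part is the bookkeeping of the bicharacter factors while moving $\alpha^{k}$ and $\alpha^{k'}$ through $D_{\gamma}$ and $D_{\mu}$, which relies on the standing assumption that quasi-centroid elements commute with $\alpha$ (i.e. $[D,\alpha]=0$). Once the $\varepsilon$-factors are correctly collected via the bicharacter axioms, the symmetric product $\bullet$ is forced back into $QC(\mathcal{A})$, whereas the antisymmetric commutator would instead land in $GDer(\mathcal{A})$, as in the preceding proposition.
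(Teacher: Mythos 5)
Your proposal is correct and follows essentially the same route as the paper: the paper likewise reduces the corollary to the single claim that $QC(\mathcal{A})$ is closed under $\bullet$ (taking $(HCJ1)$ and $(HCJ2)$ for granted on $Pl(\mathcal{A})$ from Proposition~\ref{DERhomJORDAN}), and its closure computation is line-for-line the one you describe, applying the quasi-centroid identity for $D_{\gamma}$ and then for $D_{\mu}$ and collecting the bicharacter factors via $\varepsilon(\gamma,\mu)\varepsilon(\mu,\gamma)=1$. Your explicit remark that the proof of Proposition~\ref{DERhomJORDAN} never uses the derivation property, so the Hom-Jordan identities hold for arbitrary homogeneous elements of $Pl(\mathcal{A})$, is a useful clarification that the paper leaves implicit.
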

\begin{proof} We need only to show that $D_{\gamma}\bullet D_{\mu} \in QC(\mathcal{A})$, for all $D_{\gamma}, D_{\mu} \in \mathcal{H}(QC(\mathcal{A})).$\\
Assume that $x,y \in \mathcal{H}(\mathcal{A})$, we have
\begin{align*}
 [D_{\gamma}\bullet D_{\mu}(x),\alpha^{k+s}(y)]
&=[D_{\gamma}\circ D_{\mu}(x),\alpha^{k+s}(y)]+\varepsilon(\gamma,\mu)[D_{\mu}\circ D_{\gamma}(x),\alpha^{k+s}(y)]\\
&=\varepsilon(\gamma,\mu+x)[D_{\mu}(x),D_{\gamma}(\alpha^{k+s}(y))]+\varepsilon(\mu,x)[D_{\gamma}(x),D_{\mu}(\alpha^{k+s}(y))] \\
&=\varepsilon(\gamma,\mu)\varepsilon(\gamma+\mu,x)[\alpha^{k+s}(x),D_{\mu}\circ D_{\gamma}(y)]+\varepsilon(\gamma+\mu,x)[\alpha^{k+s}(x),D_{\gamma}\circ D_{\mu}(y)] \\
&=\varepsilon(\gamma+\mu,x)[\alpha^{k+s}(x),D_{\gamma}\bullet D_{\mu}(y)].
\end{align*}
Hence $D_{\gamma}\bullet D_{\mu} \in QC(\mathcal{A}).$
\end{proof}



\begin{thebibliography}{10}
\bibitem{aizawa1991q}
N. Aizawa and H. Sato,
\newblock {\it q-deformation of the {V}irasoro algebra with central extension},
\newblock Physics Letters B
\newblock {\bf  256} (1991), 185-190.
\bibitem{AEM} F. Ammar, Z. Ejbehi     and A. Makhlouf, \emph{ Cohomology and Deformations of Hom-algebras, }
Journal of Lie Theory \textbf{21} No. 4,  (2011) 813-836 .
\bibitem{sadaoui2011cohomology} F.  Ammar, N. Saadaoui  and A. Makhlouf,
\emph{ Cohomology of Hom-{L}ie superalgebras and q-deformed Witt superalgebra, } (2011)
\newblock  e-Print: arXiv:1204.6244v1. To appear in Czechoslovak Mathematical Journal.
\bibitem{feldvoss2001represcolor} Feldvoss. J,
\emph{ Representations of Lie colour algebras, }
\newblock Adv Math,
\newblock {\bf 157} (2001), 95-137.
\bibitem{ammar2010hom}
F. Ammar and A. Makhlouf,
\newblock {\it Hom-{L}ie superalgebras and {H}om-{L}ie admissible
  superalgebras},
\newblock Journal of Algebra
\newblock {\bf  324} (2010), 1513-1528.
\bibitem{Bergen2001jordan}
Bergen, J., Grzeszuk, P,
\newblock {\it Simple Jordan color algebras arising from associative graded algebras},
\newblock Journal of Algebra
\newblock {\bf  246} (2001), 915-950.
\bibitem{Bergen1995passman}
Bergen, J., Passman, D.S,
\newblock {\it Delta ideal of Lie color algebras},
\newblock Journal of Algebra
\newblock {\bf  177} (1995), 740-754
\bibitem{LIn ni}
Liangyun Chen, Yao Ma and Lin Ni,
\newblock{\it Generalized derivations of Lie color algebras },
\newblock Results in Mathematics Springer
\newblock{\bf DOI 10 1007} (2012), s00025-012-0241-2.
\bibitem{Gohr2009hom}
Gohr. A,
\newblock {\it On Hom-algebras with surjective twisting},
\newblock J. Algebra,
\newblock vol. \textbf{324}, no. 7, (2010),  1483--1491.
\bibitem{benayadi2010hom}
S. Benayadi and A. Makhlouf,
\newblock {\it Hom-{L}ie algebras with symmetric invariant nondegenerate
  bilinear forms},
\newblock
\newblock (2010), e-Print: arXiv:1009.4226.
\bibitem{chaichian1990quantum}
M. Chaichian, D. Ellinas, and Z. Popowicz,
\newblock {\it Quantum conformal algebra with central extension},
\newblock Physics Letters B
\newblock {\bf  248} (1990), 95-99.
\bibitem{chaichian1990q}
M. Chaichian, P. Kulish, and J. Lukierski,
\newblock {\it q-{D}eformed {J}acobi identity, q-oscillators and q-deformed
  infinite-dimensional algebras},
\newblock Physics Letters B
\newblock {\bf  237} (1990), 401-406.
\bibitem{elhamdadi2010deformations}
M. Elhamdadi and A. Makhlouf,
\newblock {\it Deformations of {H}om-{A}lternative and {H}om-{M}alcev
  algebras},
\newblock Algebras Groups Geom. 
\newblock \textbf{28}, no. 2, (2011), 117--145.
\bibitem{faulkner1973geometry}
J.R. Faulkner,
\newblock {On the geometry of inner ideals},
\newblock {Journal of Algebra},
\newblock {\bf 26} 1 (1973), 1-9.

\bibitem{hartwig2006deformations}
J. Hartwig, D. Larsson, and S. Silvestrov,
\newblock {\it Deformations of {L}ie algebras using $\sigma$-derivations},
\newblock Journal of Algebra
\newblock {\bf  295} (2006), 314-361.
\bibitem{hegazi1999classification}
A. Hegazi,
\newblock {\it Classification of nilpotent {L}ie superalgebras of dimension five. I},
\newblock {International Journal of Theoretical Physics},
\newblock {\bf 38}, 6 (1999), 1735-1739.
\bibitem{Oystaeyen2006}
X.-W.Chen, S.D.Silvestrov, F.V.Oystaeyen,
\newblock {\it Representations and Cocycle Twists of color {L}ie Algebras},
\newblock {Algebras and Representations Theory},
\newblock {\bf 17},  (2006),9:633-650.
\bibitem{larsson2005quasi}
D. Larsson and S. Silvestrov,
\newblock {\it Quasi-{H}om-{L}ie algebras, central extensions and
  2-cocycle-like identities},
\newblock Journal of Algebra
\newblock {\bf  288} (2005), 321-344.
\bibitem{makhlouf2007notes}
A. Makhlouf and S. Silvestrov,
\newblock {\it Notes on formal deformations of {H}om-associative and
  {H}om-{L}ie algebras},
\newblock
\newblock Forum Math, vol. \textbf{22} (4) (2010) 715--759.
\bibitem{makhlouf2009hom}
A. Makhlouf,
\newblock {\it Hom-alternative algebras and {H}om-{J}ordan algebras},
\newblock Int. Electron. J. Algebra
\newblock {\bf  8} (2010), 177--190.
\bibitem{makhlouf2006hom}
A. Makhlouf and S. Silvestrov,
\newblock {\it On {H}om-algebra structures},
\newblock {J. Gen. Lie Theory Appl. 2},
\newblock {\bf 2} (2008), 51--64.
\bibitem{qingcheng2008derivations}
Z. Qingcheng and Z. Yongzheng,
\newblock {\it Derivations and extensions of {L}ie color algebra},
\newblock Acta Mathematica Scientia
\newblock {\bf  28} (2008), 933--948.
\bibitem{silvestrov}
S.D.Silvestrov,
\newblock {\it On the classification of 3-dimensional coloured {L}ie algebras},
\newblock Quantum groups and Quantum spaces, Banach center and publications,
\newblock {\bf  40} (1997).
\bibitem{rittenberg1978generalized}
V. Rittenberg and D. Wyler,
\newblock {\it Generalized superalgebras},
\newblock Nuclear Physics B
\newblock {\bf  139} (1978), 189-202.
\bibitem{rittenberg1978sequences}
---------,
\newblock {\it Sequences of graded $\mathbb{Z}\otimes \mathbb{Z}$ {L}ie
  algebras and superalgebras},
\newblock Journal of Mathematical Physics
\newblock {\bf  19} (1978), 2193.
\bibitem{scheunert1979theory}
M. Scheunert,
\newblock {\it The theory of Lie superalgebras: an introduction},
\newblock Springer-Verlag,
\newblock 1979.
\bibitem{scheunert1998cohomology}
M. Scheunert and RB. Zhang,
\newblock {\it Cohomology of {L}ie superalgebras and their generalizations},
\newblock {Journal of Mathematical Physics},
\newblock {\bf 39} (1998), 5024.
\bibitem{scheunert1979generalized}
M. Scheunert,
\newblock {\it Generalized {L}ie algebras},
\newblock Group Theoretical Methods in Physics,
\newblock (1979), 450--450,
\bibitem{scheunert1983graded}
M. Scheunert,
\newblock {\it Graded tensor calculus},
\newblock Journal of mathematical physics,
\newblock {\bf 24} (1983), 2658.
\bibitem{sheng2010representations}
Y. Sheng,
\newblock {\it Representations of {H}om-{L}ie algebras},
\newblock {Algebras and Representation Theory},
\newblock (2010), 1--18.

\bibitem{Yau:EnvLieAlg} D. Yau,
\newblock \emph{Enveloping algebra of Hom-Lie algebras},
\newblock
\newblock J. Gen. Lie Theory Appl. \textbf{2} (2008) 95--108.
\bibitem{Yau:homology} D. Yau,
\newblock \emph{ Hom-algebras and homology,}
\newblock J. Lie Theory \textbf{19} (2009) 409--421.
\bibitem{Yau2012}
D. Yau,
\newblock {\it Hom-maltsev, Hom-alternative and Hom-jordan algebras},
\newblock International Electronic Journal of Algebra
\newblock {\bf  11} (2012), 177--217.
\bibitem{yuan2010hom}
L. Yuan,
\newblock {\it Hom-{L}ie color algebra structures},
\newblock : Communications in Algebra, 
\newblock vol. \textbf{40}, no. 2, (2012),  575--592
\bibitem{ZhangBai} R. Zhang, D. Hou and C. Bai,
\newblock \emph{A Hom-version of the affinizations of Balinskii-Novikov and
Novikov superalgebras,}
\newblock  J. Math. Phys. 52 (2011), 023505.\\\\\\





\end{thebibliography}
\end{document}